\algnewcommand{\Inputs}[1]{%
	\State \textbf{Inputs:}
	\Statex \hspace*{\algorithmicindent}\parbox[t]{.8\linewidth}{\raggedright #1}
}
\algnewcommand{\Initialize}[1]{%
	\State \textbf{Initialize:}
	\Statex \hspace*{\algorithmicindent}\parbox[t]{.8\linewidth}{\raggedright #1}
}
	\edef\@currentlabel{\number\value{equation}}\label{ctr:last-equation}%
	\edef\@currentlabel{\number\value{lemma}}\label{ctr:last-lemma}%
\newcommand{\br}[1]{\left({#1}\right)}
\newcommand{\bc}[1]{\left\{{#1}\right\}}
\newcommand{\bsq}[1]{\left[{#1}\right]}
\renewcommand{\b}[1]{\ensuremath{\mathbf{#1}}} 
\newcommand{\bs}[1]{\ensuremath{\boldsymbol{#1}}} 
\newcommand{\E}[1]{\ensuremath{\mathbb{E}\left[#1\right]}}  
\newcommand{\Et}[1]{\ensuremath{\mathbb{E}_t[#1]}}
\newcommand{\Ewt}[1]{\ensuremath{\mathbb{E}_{w_t}[#1]}}  
\newcommand{\norm}[1]{\ensuremath{\left\|#1\right\|}} 
\newcommand{\eqtext}[1]{\ensuremath{\stackrel{\text{#1}}{=}}} 
\newcommand{\leqtext}[1]{\ensuremath{\stackrel{\text{#1}}{\leq}}} 
\newcommand{\geqtext}[1]{\ensuremath{\stackrel{\text{#1}}{\geq}}} 
\DeclareMathOperator*{\argmin}{arg\,min} 
\providecommand{\ip}[2]{\langle #1, #2 \rangle} 
\renewcommand{\O}[1]{{\mathcal{O}\left(#1\right)}}
\newcommand{\lfrom}[1]{{\leqtext{\eqref{#1}}}}
\newcommand{\gfrom}[1]{{\geqtext{\eqref{#1}}}}
\newcommand{\eqfrom}[1]{{\eqtext{\eqref{#1}}}}
\def \x {{\b{x}}}
\def \X {{\b{X}}}
\def \y {{\b{y}}}
\def \z {{\b{z}}}
\def \v {{\b{v}}}
\def \u {{\b{u}}}
\def \p {{\b{p}}}
\def \I {{\mathbf{I}}}
\def \vxi {{\bs{\xi}}}
\def \th {{\bs{\theta}}}
\def \lmda {{\bs{\lambda}}}
\def \EE {{\mathbb{E}}}
\def \cX {{\mathcal{X}}}
\def \cO {{\mathcal{O}}}
\def \cOt {{\tilde{\mathcal{O}}}}
\def \cL {{\mathcal{L}}}
\def \cR {{\mathcal{R}}}
\def \cK {{\mathcal{K}}}
\def \cN {{\mathcal{N}}}
\def \cD {{\mathcal{D}}}
\def \tD {{\tilde{\Delta}}}
\def \cC {{\mathcal{C}}}
\def \lam {{\boldsymbol{\lambda}}}
\def \mub {{\boldsymbol{\mu}}}
\def \eps {{\epsilon}}
\def \Rn {{\mathbb{R}}}
\def \tx {{\tilde{\x}}}
\def \hx {{\hat{\x}}}
\def \nt {{\tilde{\nabla}}}
\def \bt {{\tilde{B}}}
\def \W {{\mathbf{W}}}
\def \T {{\mathsf{T}}}
\def \zz {{\mathbf{0}}}
\newtheorem{assumption}{}
\theoremstyle{remark}
\newtheorem{rem}{\bf Remark}
\newtheorem{theorem}{Theorem}
\newtheorem{lemma}{Lemma}
\begin{document}
	
	\title{Stochastic Sequential Quadratic Programming for Optimization with Functional Constraints}
	\author{
		Panchajanya Sanyal$^{*}$, Srujan Teja Thomdapu$^{*}$, and Ketan Rajawat\\
		\vspace{0.4cm}
		Department of Electrical Engineering\\
		Indian Institute of Technology Kanpur\\
		Kanpur, India
		\thanks{$^{*}$These authors contributed equally.}
	}
	\maketitle

	\begin{abstract}
		Stochastic convex optimization problems with nonlinear functional constraints are ubiquitous in signal processing applications including constrained least-squares, set-membership adaptive filtering, and trajectory optimization under uncertain fields. The presence of non-linear functional constraints renders the traditional projected stochastic gradient descent and related projection-based methods inefficient, and motivates the use of first-order methods. However, existing first-order methods, including primal and primal--dual algorithms, typically rely on a bounded (sub-)gradient assumption, which may be too restrictive in high-dimensional settings. We propose a stochastic sequential quadratic programming (SSQP) algorithm that works entirely in the primal domain, avoids projecting onto the feasible region, obviates the need for bounded gradients, and achieves state-of-the-art oracle complexity under standard smoothness and convexity assumptions. A faster version, namely SSQP-Skip, is also proposed where the quadratic subproblems can be skipped in most iterations. Finally, we develop an accelerated variance-reduced version of SSQP (VARAS), whose oracle complexity bounds match those for solving unconstrained finite-sum convex optimization problems. The superior performance of the proposed algorithms is demonstrated via numerical experiments on real datasets. 
	\end{abstract}
	\begin{IEEEkeywords}
		Stochastic optimization, functional constraints, sequential quadratic programming, first-order methods, variance reduction.
	\end{IEEEkeywords}
	
	
	\section{Introduction}
	We consider the constrained optimization problem
	\begin{align}\tag{$\mathcal{P}$}\label{mainProb}
		\begin{aligned}
			\x_\star = \arg\min_{\x \in \Rn^d} ~ &f(\x) + h(\x), \\
			\text{s. t. } \hspace{5mm} &g_k(\x) \leq 0, \hspace{1cm} 1\leq k\leq m
		\end{aligned}
	\end{align}
	where $f(\x) := \Et{f_{i_t}(\x)}$ and $\Et{\cdot}$ denotes the expectation with respect to the random index $i_t$. We will also consider a finite-sum case, which arises when the index $i_t$ is sampled uniformly from $\{1, \ldots, n\}$, so that $f(\x):=\frac{1}{n}\sum_{i=1}^n f_i(\x)$. The functions $f_i:\Rn^d \rightarrow \Rn$ and $g_k:\Rn^d \rightarrow \Rn$ are proper, closed, convex, and $L$-smooth. The regularization function $h:\Rn^d \rightarrow \Rn$ is convex but possibly non-smooth, and may include an indicator function corresponding to a set-inclusive constraint of the form $\x \in \cK$ for a closed convex set $\cK$. The stochastic objective function in \eqref{mainProb} commonly arises in the context of stochastic approximation  and online signal processing
		\cite{dieuleveut2023stochastic}, \cite{ribeiro2010ergodic}. The finite-sum structure in particular is widely used in batch estimation, system identification, and empirical risk formulations built from a dataset with $n$ training samples \cite{xin2020variance}. Non-linear functional constraints similarly arise in many signal processing tasks. For example, in trajectory generation and navigation under uncertain environmental fields \cite{yooEnsemble}, constraints encode vehicle kinematics and actuator limits. Similarly, in set membership adaptive filtering, constraints can enforce residual bounds for critical samples \cite{gollamudi1998set, bhotto2011robust, flores2019set}. 
	
	
	We consider the high-dimensional setting, where for a given $\x$, a stochastic first-order oracle (SFO) returns a stochastic gradient $\nabla f_{i_t}(\x)$, e.g., from a randomly selected data sample or snapshot, together with constraint values and gradients $\{g_k(\x), \nabla g_k(\x)\}_{k=1}^m$. In such settings, classical methods such as the projected stochastic gradient descent (SGD) lose their efficacy, since each iteration requires projection onto a feasible region defined by functional constraints, an operation that is  computationally expensive in high-dimensional signal processing models, and often impractical when constraints encode dynamics or per-sample performance requirements.  
	Instead, efficient and scalable algorithms for solving \eqref{mainProb} must rely only on the first-order information provided by the oracle. The \emph{SFO complexity} of an algorithm is defined as the number of SFO calls required to achieve an $\eps$-optimal solution, which may be a random vector $\x \in \Rn^d$ such that 
	\begin{align}\label{epsopt}
			&\E{f(\x) + h(\x)} - f(\x_\star) - h(\x_\star) \leq \eps, \\
			&\E{\max_k\{[g_k(\bar{\x}_T)]_+\}} \leq \eps.
	\end{align}
	For strongly convex objectives, we will directly characterize the complexity as the number of SFO calls required to ensure that $\EE\norm{\x-\x_\star}^2 \leq \epsilon$ for a random $\x$.

	State-of-the-art first-order algorithms for solving \eqref{mainProb} and its variants, like constrained online convex optimization (COCO), include primal algorithms \cite{necoara2022stochastic,lan2020algorithms,nedic2019random,basu2019optimal, bayandina2018mirror, stonyakin2019adaptive, alkousa2020modification, alkousa2019some} as well as primal--dual algorithms \cite{nesterov2009primal,nemirovski2009robust, xu2020primal,yazdandoost2019randomized,madavan2021stochastic,yan2022adaptive,yuan2018online,yu2017online,lin2018level}. Primal algorithms can be classified into two main categories: (a) stochastic versions of Polyak's subgradient method \cite{lan2020algorithms, basu2019optimal, bayandina2018mirror} that switch between either minimizing the objective function or reducing the infeasibility at every iteration; and (b) composite approaches  \cite{necoara2022stochastic, nedic2019random}, that involve both a proximal gradient step to reduce the objective and  a sub-gradient step to reduce the constraint violation at each iteration. On the other hand, the primal--dual methods in \cite{nesterov2009primal,nemirovski2009robust, xu2020primal,yazdandoost2019randomized,madavan2021stochastic,yan2022adaptive,yuan2018online,yu2017online,lin2018level} seem to follow a common template of updating both a primal and a dual variable at every iteration. When applied to solve \eqref{mainProb}, these approaches achieve an SFO complexity of $\cO\br{1/\eps^2}$ for the convex case and $\cO\br{1/\eps}$ for the strongly convex case. Additionally, all existing results require the boundedness of the (sub-)gradients of the objective function, a condition which may fail for common objectives such as the least-squares loss function or may be difficult to verify in practice. To the best of our knowledge, no existing method attains optimal SFO complexity for solving \eqref{mainProb} while simultaneously avoiding both projection steps and bounded (sub-)gradient assumptions. 
	
	This work puts forth a new class of stochastic sequential quadratic programming (SSQP) algorithms for solving \eqref{mainProb}.  The key idea behind the proposed approach is to reformulate \eqref{mainProb} as an unconstrained non-smooth optimization problem using the exact penalty method \cite{han1979exact,lin2003some} and to solve it 
	via the prox-linear algorithm \cite{zhang2021stochastic} using only stochastic first-order information. In the present case, each iteration reduces to solving a diagonal quadratic program (QP), instead of a full projection or general convex subproblems, while still achieving (near-)optimal SFO complexity and avoiding any bounded (sub-)gradient assumptions. Our main algorithmic contributions are: (i) a vanilla SSQP algorithm with SFO complexity matching state-of-the-art primal–dual algorithms for solving \eqref{mainProb}; (ii) an SSQP-Skip algorithm that solves the proximal and QP subproblems only infrequently, while retaining the same SFO complexity guarantees; and (iii) a first accelerated variance-reduced algorithm, VARAS, for finite-sum constrained optimization problems, whose SFO complexity is nearly on par with the best known methods in the unconstrained setting. Central to these algorithms is a novel one-step inequality that leads to the required optimality gap and constraint violation bounds. The numerical performance of the proposed variants is also tested on two standard signal processing problems and substantially outperforms representative baselines for solving \eqref{mainProb}. The proposed algorithms differ fundamentally from classical sequential QP  methods, which require full gradient and Hessian information at each iteration \cite[Sec. 4.3.1]{bertsekas} and are therefore ill-suited to large-scale stochastic settings; in contrast, the proposed SSQP methods operate with only stochastic first-order information. Beyond the stochastic or finite-sum settings, the proposed framework is also compatible with real-time or streaming regimes, where its tracking performance can be studied using similar tools \cite{bedi2018tracking}.
	
	\subsection{Related Work}
	Sequential quadratic programming (SQP) methods are among the most effective approaches for solving nonlinear optimization problems of the form \eqref{mainProb} \cite{gill2012sequential}. Conventional SQP methods rely on second-order derivatives of $f_i$ and $\{g_k\}_{k=1}^m$ to solve a sequence of QP problems subject to linearized constraints \cite{han1979exact}, and have been widely applied to mixed-integer nonlinear programming and nonlinear optimization with nonlinear equality constraints; see e.g. \cite{gill2012sequential, curtis2024sequential}.
	
	The exact penalty reformulation of constrained problems has been well studied in convex optimization, and a sequential QP approach is detailed in \cite[Sec.~4.3.1]{bertsekas}. The objective of the exact-penalty reformulation can be viewed as a compositional optimization problem, and in the deterministic case it can be solved using the approach of \cite{doikov2022optimization}. The corresponding stochastic problems can be addressed using the model-based framework of \cite{davis2019stochastic} or the prox-linear method of \cite{zhang2021stochastic}. A different exact-penalty method was proposed in \cite{wang2017penalty}, where the focus is on non-convex equality constraints, leading to an oracle complexity of $\mathcal{O}(\eps^{-3.5})$ for finding an $\eps$-stationary point. Using smooth approximations of exact penalties, so as to work with their gradients, is another alternative but typically yields weaker rates \cite{xiao2019penalized, thomdapu2019optimal}. Our work can be viewed as bringing these exact-penalty and prox-linear ideas into a stochastic SQP framework with explicit oracle-complexity guarantees.
	
	Convex optimization problems with functional constraints have been extensively studied; see \cite{bertsekas2014constrained} and references therein. When the constraint sets are difficult to project onto, projected SGD variants can be computationally expensive. The number of projections has been reduced to $\cO(\log T)$ in \cite{zhang2013logt, chen2016optimal} and to a single projection in \cite{mahdavi2012stochastic}, though these schemes can still be impractical when $m$ is large. Subsequent works that completely avoid projections include primal--dual methods \cite{xu2020primal, yazdandoost2019randomized, madavan2021stochastic, yan2022adaptive, yuan2018online, yu2017online, lin2018level} and primal methods \cite{necoara2022stochastic, lan2020algorithms, nedic2019random, basu2019optimal, bayandina2018mirror}. To the best of our knowledge, these projection-free methods have not been accelerated to attain the optimal rates that are possible in the unconstrained setting. The proposed work can be viewed as lying between these two classes: it avoids projections, instead solving a diagonal QP with linear constraints, while achieving (near-)optimal rates at par with accelerated variance-reduced projected-SGD \cite{lan2019unified}. In addition, and unlike many of the above approaches, our analysis does not require boundedness of the gradients of the constituent functions.

	Convex optimization problems with linear inclusive constraints have been studied in \cite{jalilzadeh2021primal, fercoq2019almost, kundu2018convex}; in contrast, we focus here on nonlinear functional constraints. Our formulation also differs from those in \cite{madavan2021stochastic, yan2022adaptive, yu2017online, lin2018level, lan2020algorithms, basu2019optimal, boob2023stochastic, akhtar2021conservative}, where the constraints are only required to hold on average, i.e., $\E{f_i(\x,\xi)} \leq 0$. Related stochastic formulations with more general stochastic function classes are considered in \cite{thomdapu2019optimal, thomdapu2021optimizing, thomdapu2023stochastic}. Other works address problems with infinitely many functional constraints; see \cite{xu2020primal, yazdandoost2019randomized, necoara2022stochastic, necoara2021minibatch, nedic2019random, wang2016stochastic}. The best convergence rates obtained in these papers are of order $\cO(1/\eps^2)$ for convex objectives and $\cO(1/\eps)$ for strongly convex objectives. These rates have only been improved in \cite{wei2018solving, yang2017richer} for a specific formulation imposing additional structure on the constraint functions. Finally, \cite{guo2022online, sinha2024optimal} study related problems from the perspective of constrained online convex optimization (COCO), where the lack of stationarity assumptions leads to more conservative bounds. In contrast, in this paper we propose a new method for problems of the form \eqref{mainProb} and establish (near-)optimal oracle-complexity rates for both convex and strongly convex objectives under our setting.
	
	We remark that among these, \cite{xu2020primal, necoara2022stochastic, necoara2021minibatch, nedic2019random} adopt a different SFO model for \eqref{mainProb}, wherein at iteration $t$, only $\{g_{j_t}(\x), \nabla g_{j_t}(\x)\}$ for random index $j_t$, is revealed. While this greatly reduces the per-iteration cost, these algorithms require strong regularity assumptions that couple individual constraint violations with the distance to the full feasible set (see, e.g., \cite[Assumption 4]{necoara2022stochastic}), as well as bounded subgradients of both the objective and the constraints. Their resulting convergence rates match those of stochastic subgradient methods and depend explicitly on the regularity constant of the constraint system. In contrast, the proposed SSQP method uses all functional constraint gradients at every iteration, and therefore does not rely on such regularity conditions, while attaining (near-)optimal oracle complexity. Nevertheless, a single-constraint variant of SSQP with improved rates may be an interesting direction for future work.	
	
	Different from these, the random constraint projection method in \cite{wang2016stochastic2} projects onto the sampled constraint set $\{\x \mid g_{j_t}(\x) \leq 0\}$ at iteration $t$. Although \cite{wang2016stochastic2} relaxes the bounded-subgradient assumption, its guarantees are not directly comparable to ours. In particular, boundedness of the iterates is assumed, enforced through an additional auxiliary projection, and the bounds are established for projected averages rather than for the returned iterate.
	
	A comparison of the proposed methods with the most relevant state-of-the-art algorithms is summarized in Table~\ref{constr-papers}. The table excludes works such as \cite{curtis2024sequential}, which provide only asymptotic convergence guarantees, but includes bounds for online algorithms adapted to the present setting. We also omit the decentralized non-convex extension of our approach presented in \cite{idrees2025decentralized}. 
	
	We remark that the table does not list the per-iteration computational costs, which could depend on the implementation and problem structure. While several primal-dual and online methods have $\cO(m)$ constraint-related costs per iteration, the computational complexity of the other algorithms depends on the implementation and problem structure. For instance, random constraint projection methods treat the projections as oracle operations \cite{wang2016stochastic2}. In practice, however, projecting onto \(\{\x \mid g_k(\x) \leq 0\}\) may itself be non-trivial and require a custom iterative solver. The level-set method in \cite{lin2018level} requires solving a nonsmooth level-set subproblem at each outer iteration, whose computational complexity depends on the problem structure. Likewise, the final projection required in \cite{mahdavi2012stochastic} may be difficult for nonlinear functional constraints. In contrast, the proposed algorithms require solving QP subproblems for which mature off-the-shelf solvers are readily available, although the worst-case costs  can be as high as $\cO(m^3)$.
	
	\begin{table}[ht]
		\centering
		\caption{Related works solving \eqref{mainProb} with state-of-the-art complexity. Here, SCP stands for sequential convex programming and SQP for sequential quadratic programming. }
		\label{constr-papers}
		\begin{tabular}{|c|c|c| c|}
			\hline
			\multirow{2}{*}{Ref} & \multirow{2}{*}{Method class} & \multicolumn{2}{c|}{SFO Complexity} \\ \cline{3-4}
			& & \multicolumn{1}{c|}{Convex} & Strongly convex \\ \hline
			\cite{mahdavi2012stochastic} & primal--dual & \multicolumn{1}{c|}{$\cO\br{\frac{1}{\eps^2}}$} & $\cOt\br{\frac{1}{\eps}}$ \\ \hline
			\cite{yan2022adaptive, madavan2021stochastic} & primal--dual & \multicolumn{1}{c|}{$\cO\br{\frac{1}{\eps^2}}$} & - \\ \hline
			\cite{yuan2018online} & primal--dual (online) & $\cO\br{\frac{1}{\eps^2}}$ & $\cOt\br{\frac{1}{\eps}}$ \\ \hline
			\cite{lin2018level} & primal & \multicolumn{1}{c|}{$\cO\br{\frac{1}{\eps^2}}$} & - \\ \hline
			\cite{lan2020algorithms} & primal & \multicolumn{1}{c|}{$\cO\br{\frac{1}{\eps^2}}$} & $\cO\br{\frac{1}{\eps}}$ \\ \hline
			\multirow{2}{*}{\cite{necoara2022stochastic}} & primal & $\cO\br{\frac{1}{\eps^2}}$ & $\cO\br{\frac{1}{\eps}}$ \\ \hline
			\cite{bayandina2018mirror} & primal & \multicolumn{1}{c|}{$\cO\br{\frac{1}{\eps^2}}$} & $\cO\br{\frac{1}{\eps}}$ \\ \hline
			\cite{wang2016stochastic2} & random projection & \multicolumn{1}{c|}{$\cO\br{\frac{1}{\eps^2}}$} & - \\ \hline
			\multirow{2}{*}{\cite{guo2022online}} & SCP (online) & $\cO\br{\frac{1}{\eps^2}}$ & $\cOt\br{\frac{1}{\eps}}$ \\ \hline
			\begin{tabular}[c]{@{}c@{}}SSQP\\(Ours)\end{tabular} & SQP & \multicolumn{1}{c|}{$\cO\br{\frac{1}{\eps^2}}$} & $\cO\br{\frac{1}{\eps}}$ \\ \hline
			\begin{tabular}[c]{@{}c@{}}SSQP-Skip\\(Ours)\end{tabular} & SQP & \multicolumn{1}{c|}{-} & $\cO\br{\frac{1}{\eps}}$ \\ \hline
			\begin{tabular}[c]{@{}c@{}}VARAS\\(Ours)\end{tabular} & SQP & \multicolumn{1}{c|}{\scriptsize $\cO\br{\sqrt{\frac{n}{\epsilon}}}$} & \multicolumn{1}{c|}{\tiny $\cO\br{n \log n + \sqrt{n}\log\tfrac{1}{\eps}}$} \\ \hline
		\end{tabular}
	\end{table}
	
	\subsection{Organization}
	This paper is organized as follows. Sec. \ref{sec-prelim} provides some preliminaries, including the assumptions, background, and basic inequalities. Sec. \ref{sec-ssqp} details the proposed SSQP and SSQP-Skip algorithms for solving the general stochastic version of \eqref{mainProb}, and provides their oracle complexity bounds. Sec. \ref{sec-avrssqp} develops the accelerated variance-reduced SSQP algorithm for the finite-sum version of \eqref{mainProb} and provides the corresponding oracle complexity bounds. The numerical performance of the proposed class of algorithms is provided in Sec. \ref{sec-numerical} and finally, Sec. \ref{sec-conclusion} concludes the paper. 
	
	\subsection{Notation}
	We use regular (bold)-faced letters to represent scalars (column vectors). We let $[v]_+:= \max\{0,v\}$, so that $\max\bc{[v_k]_+}= \max\bc{[v_1]_+, \ldots ,[v_m]_+} = \max\bc{0, v_1, \ldots, v_m}$. The Euclidean norm of a vector $\x$ is denoted by $\norm{\x}$ and $\E{.}$ denotes the expectation operator.
	
	\section{Preliminaries}\label{sec-prelim}
	This section contains the assumptions and some technical claims that we use throughout the analysis. Other basic mathematical inequalities that are used throughout the text are listed in Appendix \ref{basic}. 
	
	\subsection{Assumptions}
	\begin{assumption}\label{slater}
		The Slater condition holds for \eqref{mainProb}, i.e., there exists a feasible $\tx$ such that 
		\begin{align}
			g_k(\tx) &\leq -\nu < 0, & 1\leq k \leq m.	
		\end{align}
		Additionally, we assume that the optimality gap at the Slater point is bounded, i.e., $f(\tx) + h(\tx) - f(\x_\star) - h(\x_\star) \leq \bt$.
	\end{assumption}
	In the context of constrained optimization, the Slater constraint qualification (CQ) is one of the classical CQs that imply strong duality and existence of a primal--dual optimum pair $(\x_\star,\lam_\star)$. Since \eqref{mainProb} is convex, the optimum pair satisfies the Karush-Kuhn-Tucker (KKT) conditions, so that
	\begin{align}\label{optimalKKT}
		f(\x_\star)+h(\x_\star) &= \min_{\x  \in \Rn^d} f(\x) + h(\x) + \sum_{k=1}^m \lambda_{k,\star} g_k(\x) \\
		&\leq f(\tx) + h(\tx) - \norm{\lam_\star}_1 \nu\text{ ,}
	\end{align} 
	for the Slater point $\tx$. Rearranging and using Assumption \ref{slater}, we obtain the bound $\norm{\lam_\star}_1 \leq \frac{\bt}{\nu}$. In practice, $\nu$ and $\bt$ are problem parameters that must be found by parameter tuning. For example, suppose that we have found a Slater point $\tx$ so that $\nu = -\max_k g_k(\tx)$. Then if we can also find the unconstrained minimum $\x_u = \arg\min_{\x \in \Rn^d} (f(\x) + h(\x))$, we can set $\bt = f(\tx) + h(\tx) - f(\x_u) - h(\x_u)$. 
	
	We remark that Assumption \ref{slater} can be replaced by a weaker, but harder to verify, requirement that a KKT point $(\x_\star, \lam_\star)$ exists, as also done in \cite{xu2020primal}. All subsequent complexity bounds continue to hold under this weaker condition as well, with every occurrence of  $\bt/\nu$ replaced by $\norm{\lam_\star}_1$. In comparison, Slater's CQ is often easy to verify, but is a stronger sufficient condition and may fail for some equivalent formulations, as illustrated in \cite{xu2020primal}. An alternative regularity assumption often used in related works is the linear regularity of the constraint system \cite{necoara2022stochastic,nedic2019random,necoara2021minibatch, wang2016stochastic2}. Although Slater's CQ implies bounded linear regularity on compact sets through classical error-bound results \cite[Corollary~5]{bauschke1999strong}, it does not in general imply the global regularity assumptions used in these works, which are stated over the full decision region.
	
	The next few assumptions define the other problem parameters used for the analysis. The first set of assumptions is standard. 
	\begin{assumption}\label{a1}
		The following assumptions hold:
		\begin{enumerate}
			\item The functions $f_i$ and $g_k$ are proper, closed, and convex.
			\item The functions $f_i$ are $L_f$-smooth while the functions $g_k$ are $L_g$-smooth. 
			\item The function $f$ is $\mu$-strongly convex for $\mu \geq 0$. 
		\end{enumerate}   
	\end{assumption}
	The strong convexity assumption may not be invoked for some of the proposed algorithms, for which we will simply set $\mu = 0$. The following assumption will be required for the general stochastic optimization problem, but will be dropped for the finite-sum case. 
	\begin{assumption}\label{gradnoi}
		The gradient noise at the optimum $\x_\star$ is bounded as $\Et{\norm{\nabla f(\x_\star) - \nabla f_{i_t}(\x_\star)}^2} \leq \sigma^2$. 
	\end{assumption}
	The bounded gradient noise assumption is again standard in the literature and is always required for SGD-like algorithms in general. Note that we only require the gradient noise to be bounded at a specific point $\x_\star$ rather than for the entire domain $\cK$. Assumption \ref{gradnoi} along with the smoothness of $f_{i_t}$ implies that for a given $\x$, 
	\begin{align}
		&\Et{\norm{\nabla f_{i_t}(\x)-\nabla f(\x)}^2} \\
		&= \Et{\norm{\nabla f_{i_t}(\x)-\nabla f(\x_\star)}^2} - \Et{\norm{\nabla f(\x_\star)-\nabla f(\x)}^2}\nonumber\\
		&\lfrom{peterpaul} 2\Et{\norm{\nabla f_{i_t}(\x_\star)-\nabla f_{i_t}(\x)}^2}\nonumber \\
		&\hspace{1cm}+ 2\Et{\norm{\nabla f(\x_\star)-\nabla f_{i_t}(\x_\star)}^2}\nonumber\\
		&\lfrom{gradnoi} 2\Et{\norm{\nabla f_{i_t}(\x_\star)-\nabla f_{i_t}(\x)}^2} + 2\sigma^2 \label{sm0} \\
		&\lfrom{coco} 4L_f\Et{f_{i_t}(\x_\star) - f_{i_t}(\x) - \ip{\nabla f_{i_t}(\x)}{\x_\star - \x}} + 2\sigma^2 \nonumber\\
		&\leq 4L_fD_f(\x_\star,\x)+ 2\sigma^2, \label{gradvar1}
	\end{align}
	where $D_f(\u,\v) := f(\u) - f(\v) - \ip{\nabla f(\v)}{\u - \v}$ and we have used some basic inequalities from Appendix \ref{basic}. The inequality in \eqref{gradvar1} bounds the gradient noise at arbitrary $\x$ in terms of the Bregman divergence between $\x$ and $\x_\star$ (with respect to $f$) and the gradient noise at $\x_\star$, and will turn out to be useful later on. Finally, we have the following initialization condition. 
	
	\begin{assumption}\label{init}
		All algorithms can be initialized with arbitrary, possibly infeasible $\x_0 \in \cK$ which satisfies $\norm{\x_0-\x_\star} \leq B_x$ and 
		\begin{align*}
			f(\x_0) + h(\x_0) + \gamma\sum_{k=1}^m [g_k(\x_0)]_{+} - f(\x_\star) - h(\x_\star) \leq B_{\gamma},
		\end{align*}
		for a given $\gamma > 0$. 
	\end{assumption}
	
	\subsection{Exact Penalty Reformulation}\label{penaltyequivalent}
	We reformulate the problem using the exact penalty method \cite[Sec. 4.3.1]{bertsekas}, so as to obtain:
	\begin{align}
		\x_\star &= \arg\min_{\x\in \Rn^d} ~ F(\x) := f(\x) + h(\x) + \gamma\max\{[g_k(\x)]_+\} \nonumber \\
		&= \arg\min_{\x\in\Rn^d, v \geq 0} f(\x) + h(\x) + \gamma v \tag{$\mathcal{P}_1$}\label{penalty1}\\
		& \text{s. t. }	~ g_k(\x) \leq v, 	\hspace{1cm} 1\leq k \leq m \label{epi1}
	\end{align}
	where recall that $\max\{[v_k]_+\} = \max\{0,v_1,v_2, \ldots, v_m\}$. In general, the solution of \eqref{penalty1} is the same as that of \eqref{mainProb} for sufficiently large $\gamma$. Specifically, under Assumption \ref{slater}, it suffices to set $\gamma \geq \frac{\bt}{\nu}$. To see this, associate dual variables $\mu_k \geq 0$ with the $k$-th constraint in \eqref{penalty1}, so that the Lagrangian becomes:
	\begin{align}
		L(\x,v,\mub) &= f(\x) + h(\x) + \gamma v  + \sum_{k=1}^m \mu_k (g_k(\x) - v) \\
		&\hspace{-5mm}= f(\x) + h(\x) + \sum_{k=1}^m \mu_k g_k(\x) + v(\gamma - \norm{\mub}_1)
	\end{align} 
	where $\mub \in \Rn^{m}_+$ collects the dual variables $\{\mu_k\}_{k=1}^m$. Since the Slater CQ is satisfied by \eqref{mainProb}, it is also satisfied by \eqref{penalty1}. Therefore, the first order KKT point $(\x_\star,v_\star,\mub_\star)$ is such that
	\begin{align}
		(\x_\star,v_\star) = \arg\min_{\x \in \Rn^d, v\geq 0} &f(\x) + h(\x) + \sum_{k=1}^m \mu_{k,\star} g_k(\x) \\
		&+ v(\gamma - \norm{\mub_\star}_1)
	\end{align}
	Hence, for $\gamma = \frac{\bt}{\nu} \geq \norm{\mub_\star}_1$, it follows that $v_\star = 0$ and consequently $(\x_\star, \mub_\star)$ is KKT-optimal for \eqref{mainProb}. 
	
	The requirement $\gamma\ge\norm{\mub_\star}_1$ is analogous to other problem-dependent constants that appear in constrained stochastic optimization works, such as the linear regularity constant in \cite{necoara2022stochastic, mahdavi2012stochastic, wang2016stochastic2} or the diameter of compact decision sets in \cite{lan2020algorithms,yuan2018online, guo2022online}. In practice, the theoretically sufficient value of $\gamma$ may turn out to be too conservative and may require tuning.
	
	In addition to characterizing the SFO complexity, we observe that the SQP methods require solving a QP with $m$ linear constraints at every iteration.  Hence for this class of algorithms, we assume access to the quadratic minimization oracle (QMO) which can provide the solution to a given QP with $m$ linear constraints. In this case, for general convex objectives, we will characterize the performance of the algorithms in terms of the number of SFO and QMO calls required to achieve an $\epsilon$-optimal solution. 
	
	The exact penalty reformulation confirms that for any solution $\x_\star$ of \eqref{mainProb}, $\Delta_t := \E{F(\x_t)}-F(\x_\star)$ is a non-negative quantity. All subsequent theorems will involve an intermediate step of upper bounding $\Delta_t$ or related quantities, which will then lead to the required SFO and QMO complexity results for \eqref{epsopt}. 
	In particular, if an output point $\x$ satisfies $\E{\max_k [g_k(\x)]_+}\le \eps$, then Markov's inequality gives
		$\mathbb P\left(\max_{1\le k\le m} [g_k(\x)]_+>\kappa\right)\le \frac{\eps}{\kappa}$ for any $\kappa>0$. For instance, choosing $\kappa=\sqrt{\eps}$ yields that the probability of any $\sqrt{\eps}$-constraint violation is at most $\sqrt{\eps}$.

	\section{Stochastic Sequential Quadratic Programming Method} \label{sec-ssqp}
	In this section, we consider the general stochastic problem in \eqref{mainProb}. Reformulating the problem as \eqref{penalty1} makes it amenable to the application of stochastic proximal gradient methods. Specifically, we utilize the stochastic prox-linear algorithm from \cite{davis2019stochastic, zhang2021stochastic,duchi2018stochastic,drusvyatskiy2019efficiency} to develop the proposed SSQP algorithm. Throughout this section, we will focus on obtaining state-of-the-art rates but ignore constants or higher-order terms. Future work may target sharper constants and lower bounds under this oracle. 
	
	\subsection{SSQP Algorithm}
	The SSQP algorithm entails performing a partial linearization of the objective in \eqref{penalty1}, adding a proximal penalty, and minimizing the resulting quadratic form. Specifically, the objective and constraint functions are linearized, but the regularizer, as well as the $\max\{[\cdot]_+\}$ operator are not disturbed. Starting at an arbitrary $\x_0$, the updates of the proposed SSQP algorithm take the form:
	\begin{align}
		\x_{t+1} &= \argmin_{\u\in\Rn^d} \Bigl\{\ip{\nabla f_{i_t}(\x_t)}{\u} + h(\u) + \frac{1}{2\eta_t}\norm{\x_t - \u}^2 \nonumber\\
		& + \gamma \max\{[g_k(\x_t) + \ip{\nabla g_k(\x_t)}{\u-\x_t}]_+\} \label{proxlin-x}\Bigr\}\\
		&\hspace{-6mm}=\argmin_{\u \in \Rn^d,v\geq 0} \bigl\{\ip{\nabla f_{i_t}(\x_t)}{\u} + h(\u) + \frac{1}{2\eta_t}\norm{\x_t - \u}^2 + \gamma v\bigr\} \nonumber\\
		&\hspace{-2mm}\text{s. t. } g_k(\x_t) + \ip{\nabla g_k(\x_t)}{\u-\x_t} \leq v, ~ 1\leq k \leq m,\label{proxlin-x-epi}
	\end{align}
	for $t\geq 0$, where $i_t$ is a random index. Observe that when $\cK = \Rn^d$, the updates bear resemblance to the sequential quadratic programming approach proposed in \cite[Sec. 4.3.1]{bertsekas}, and hence we refer to our algorithm, summarized in Algorithm \ref{ssqp}, as Stochastic SQP (SSQP).
	
	\begin{algorithm}
		\caption {SSQP}
		\begin{algorithmic}[1]
			\State\textbf{Input:} $\x_0\in \cK$, $\gamma = \bt/\nu$, and $\eta_t \in (0,1]$.
			\State\textbf{for} $t =0,1,...,T-1$
			\State\hspace{3mm} Sample $i_t$ randomly
			\State\hspace{3mm} Update $\x_{t+1}$ using \eqref{proxlin-x}
			\State\textbf{end for}
			\State\textbf{Output:} $\bar{\x}_T = \frac{\sum_{t = 1}^T\eta_t\x_t}{\sum_{t=1}^T \eta_t}$.	
		\end{algorithmic}
		\label{ssqp}
	\end{algorithm}

	Since SSQP is a special case of the stochastic prox-linear algorithm, the $\cO\br{1/\sqrt{t}}$ convergence result for convex objectives follows from \cite{davis2019stochastic}. However, the generality of the prox-linear algorithm leads to relatively weaker bounds and requires stronger assumptions. Below, we provide a tighter bound which does not require the bounded gradients assumption commonly required for analyzing prox-linear algorithms. 
	
	The convergence of Algorithm \ref{ssqp} is established in the statement of the following theorem, whose proof is provided in Appendix \ref{pfprox-thm}.
	\begin{theorem}\label{prox-thm}
		Under Assumptions \ref{slater}-\ref{init}, $L = \max\{\gamma L_g,L_f\}$, and $\delta_0 =\norm{\x_0-\x_\star}^2$, we have the following SFO and QMO complexity bounds. 
		\begin{enumerate}[leftmargin=*, wide]
			\item For a convex objective, using the stepsize $\eta_t = \frac{\eta_0}{\sqrt{T}}$, where $\eta_0=\min\{\frac{\sqrt{\delta_0}}{2\sigma}, \frac{1}{4L}\}$, we obtain
			\begin{align}
				\E{f(\bar{\x}_T) + h(\bar{\x}_T)}-&f(\x_\star) - h(\x_\star) \nonumber\\
				&\leq \frac{2}{\sqrt{T}}\max\{2L\delta_0, \sigma\sqrt{\delta_0}\}, \label{thm1-conv}\\
				\E{\max_k\{[g_k(\bar{\x}_T)]_+\}} &\leq \frac{2}{(\gamma-\frac{\bt}{\nu})\sqrt{T}}\max\{2L\delta_0, \sigma\sqrt{\delta_0}\}, \label{thm1-constraint}
			\end{align}
			and an SFO/QMO complexity of $\O{\frac{\max\{\delta_0^2L^2, \sigma^2\delta_0\}}{\epsilon^2}}$. 
			\item For a $\mu$-strongly convex objective, the stepsize $\eta_t = \frac{2}{\mu(t+\lfloor16\kappa\rfloor+1)}$ with $\kappa = L/\mu$ results in the bound
			\begin{align}
				\E{\norm{\x_{T}-\x_\star}^2} &\leq \frac{8\sigma^2}{\mu^2T} + \frac{(16\kappa+2)^3\delta_0}{T^3}, 
			\end{align}
			and an SFO/QMO complexity of $\O{\frac{\sigma^2}{\mu^2\epsilon}+\frac{\kappa\delta_0^{1/3}}{\epsilon^{1/3}}}$. 
		\end{enumerate}
	\end{theorem}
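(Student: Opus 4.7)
The plan is to exploit the fact that the SSQP update \eqref{proxlin-x} is the minimizer of a $1/\eta_t$-strongly convex subproblem, and to derive a one-step descent inequality by comparing against the feasible point $\x_\star$. Setting $P_t(\u) := \max_k[g_k(\x_t) + \ip{\nabla g_k(\x_t)}{\u - \x_t}]_+$, strong convexity of the subproblem objective gives, for every $\u \in \Rn^d$,
\begin{align*}
  & \ip{\nabla f_{i_t}(\x_t)}{\x_{t+1}-\u} + h(\x_{t+1}) - h(\u) + \gamma P_t(\x_{t+1}) - \gamma P_t(\u) \\
  & \quad + \tfrac{1}{2\eta_t}\|\x_{t+1} - \x_t\|^2 + \tfrac{1}{2\eta_t}\|\u - \x_{t+1}\|^2 \leq \tfrac{1}{2\eta_t}\|\u - \x_t\|^2.
\end{align*}
Choosing $\u = \x_\star$ and observing that $P_t(\x_\star) = 0$---since convexity of each $g_k$ and feasibility of $\x_\star$ give $g_k(\x_t) + \ip{\nabla g_k(\x_t)}{\x_\star - \x_t} \leq g_k(\x_\star) \leq 0$---yields the core three-point recursion.

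\textbf{Converting linearized quantities into $F$.} The next step is to lower-bound $\ip{\nabla f_{i_t}(\x_t)}{\x_{t+1} - \x_\star}$ by splitting it as $\ip{\nabla f_{i_t}(\x_t)}{\x_t - \x_\star} + \ip{\nabla f_{i_t}(\x_t)}{\x_{t+1} - \x_t}$, applying (strong) convexity of $f$ after taking conditional expectation on the first term and $L_f$-smoothness of $f_{i_t}$ on the second. The zero-mean noise $\ip{\nabla f_{i_t}(\x_t) - \nabla f(\x_t)}{\x_{t+1} - \x_\star}$ is controlled by Young's inequality together with the variance bound \eqref{gradvar1}. For the constraint, $L_g$-smoothness of each $g_k$ and monotonicity of $[\cdot]_+$ and $\max_k$ give $\max_k[g_k(\x_{t+1})]_+ \leq P_t(\x_{t+1}) + \tfrac{L_g}{2}\|\x_{t+1} - \x_t\|^2$. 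Enforcing $\eta_t \leq 1/(4L)$ with $L = \max\{\gamma L_g, L_f\}$ ensures that all positive $\|\x_{t+1} - \x_t\|^2$ contributions are absorbed by the negative $\tfrac{1}{2\eta_t}\|\x_{t+1}-\x_t\|^2$ of the proximal term. After taking conditional expectations and using $F(\x_{t+1}) = f(\x_{t+1}) + h(\x_{t+1}) + \gamma\max_k[g_k(\x_{t+1})]_+$, this produces, schematically,
\begin{align*}
  (1+\mu\eta_t)\,\EE\|\x_{t+1}-\x_\star\|^2 \leq \EE\|\x_t-\x_\star\|^2 - 2\eta_t\,\EE[F(\x_{t+1})-F(\x_\star)] + 4\eta_t^2\sigma^2.
\end{align*}

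\textbf{Convex case.} Setting $\mu = 0$ and $\eta_t = \eta_0/\sqrt{T}$, I would telescope this recursion over $t = 0,\ldots,T-1$, divide by $2T\eta_t$, and apply Jensen's inequality to the convex function $F$ evaluated at $\bar{\x}_T$. Balancing the two leading terms in $\eta_0$ while honoring $\eta_0 \leq 1/(4L)$ yields the stated $\eta_0 = \min\{\sqrt{\delta_0}/(2\sigma),\,1/(4L)\}$ and the bound $\EE[F(\bar{\x}_T)] - F(\x_\star) \leq 2\max\{2L\delta_0,\sigma\sqrt{\delta_0}\}/\sqrt{T}$. The objective gap \eqref{thm1-conv} is immediate from $f + h \leq F$, while the constraint bound \eqref{thm1-constraint} follows from the KKT-based estimate $F(\x) - F(\x_\star) \geq (\gamma - \|\lam_\star\|_1)\max_k[g_k(\x)]_+ \geq (\gamma - \bt/\nu)\max_k[g_k(\x)]_+$, where the last inequality uses Assumption \ref{slater}. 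Setting both right-hand sides below $\eps$ then gives $T = \cO(\max\{\delta_0^2 L^2,\sigma^2\delta_0\}/\eps^2)$.

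\textbf{Strongly convex case and main obstacle.} With $\mu > 0$ the one-step inequality rearranges to $\EE\|\x_{t+1}-\x_\star\|^2 \leq \EE\|\x_t-\x_\star\|^2/(1+\mu\eta_t) + 4\eta_t^2\sigma^2$, and the stepsize $\eta_t = 2/(\mu(t + \lfloor 16\kappa\rfloor + 1))$ is chosen precisely so that a standard non-asymptotic induction yields the claimed $\cO(\sigma^2/(\mu^2 T) + \kappa^3 \delta_0/T^3)$ decay, and hence the SFO/QMO complexity $\cO(\sigma^2/(\mu^2\eps) + \kappa\delta_0^{1/3}/\eps^{1/3})$. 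The main obstacle throughout is the careful bookkeeping around the variance bound \eqref{gradvar1}: its $4L_f D_f(\x_\star,\x_t)$ component must be absorbed by the Bregman descent implicit in $F(\x_{t+1})-F(\x_\star)$, which is what forces the smoothness-based restriction $\eta_t \leq 1/(4L)$ and, in the strongly convex case, explains the $\lfloor 16\kappa\rfloor$ offset in the stepsize schedule.
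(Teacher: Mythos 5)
Your proposal follows essentially the same route as the paper: the three-point inequality from the $\tfrac{1}{\eta_t}$-strong convexity of the QP subproblem, vanishing of the linearized penalty at the feasible $\x_\star$, $L_g$-smoothness to pass from the linearized constraint back to $\max_k\{[g_k(\x_{t+1})]_+\}$, Young's inequality with the variance bound \eqref{gradvar1} absorbed into the negative Bregman divergence (which is exactly what forces $\eta_t \leq 1/(4L)$), and the KKT/Slater argument $F(\x)-F(\x_\star)\geq(\gamma-\tfrac{\bt}{\nu})\max_k\{[g_k(\x)]_+\}$ for the constraint violation. The convex-case stepsize balancing and the resulting constants are exactly as you describe.

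The one place you fall short of the stated bound is the strongly convex case. Your recursion $\E{\norm{\x_{t+1}-\x_\star}^2}\leq \E{\norm{\x_t-\x_\star}^2}/(1+\mu\eta_t)+4\eta_t^2\sigma^2$ harvests strong convexity only once, so with $\eta_t=\tfrac{2}{\mu(t+\omega+1)}$ the per-step contraction is $\tfrac{t+\omega+1}{t+\omega+3}$, which telescopes to an $\O{\kappa^2\delta_0/T^2}$ initial-condition term rather than the claimed $\O{\kappa^3\delta_0/T^3}$. The paper uses strong convexity twice: after absorbing the variance term, a fraction $(1-4\eta_tL_f)$ of the Bregman divergence $D_f(\x_\star,\x_t)\geq\tfrac{\mu}{2}\norm{\x_t-\x_\star}^2$ survives on the right, and in addition $F(\x_{t+1})-F(\x_\star)\geq\tfrac{\mu}{2}\norm{\x_{t+1}-\x_\star}^2$ is applied on the left, giving the contraction $\tfrac{1-\mu\eta_t/2}{1+\mu\eta_t}=\tfrac{t+\omega}{t+\omega+3}$ whose cubic-weight telescoping yields the $T^{-3}$ term. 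This does not change the leading $\O{\sigma^2/(\mu^2\eps)}$ complexity, but it is needed to reach the exact bound (and the lower-order $\kappa\delta_0^{1/3}/\eps^{1/3}$ term) stated in the theorem.
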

	Proof of Theorem \ref{prox-thm} relies on an important one-step inequality (Lemma \ref{step}) that is reminiscent of but different from corresponding inequality in the proximal SGD setting.  As in proximal SGD, using a diminishing stepsize in the convex case yields a slightly worse bound of $\O{\frac{\log(T)}{\sqrt{T}}}$ in \eqref{thm1-conv}. Ignoring the constant terms, the SFO complexity results in Theorem \ref{prox-thm} also match the best known bounds for proximal SGD \cite{khaled2023unified, gorbunov2020unified} for both convex and strongly convex objectives. The obtained rates also match the best known rates achieved for the functional constrained problems in  \cite{mahdavi2012stochastic,madavan2021stochastic,yan2022adaptive,yu2017online,lin2018level,lan2020algorithms,basu2019optimal,bayandina2018mirror} while not requiring any bounded gradient assumptions. 
	
	Note that the smoothness of the objective function is critical for establishing the one-step inequality in Lemma~\ref{step}. A nonsmooth extension of Thm. \ref{prox-thm} would likely require bounded-subgradient or subgradient-growth assumptions on the stochastic objective components, which is the assumption regime already covered by existing methods such as \cite{necoara2022stochastic,nedic2019random}.
	
	The bounds and proof of Theorem \ref{prox-thm} reveal a deeper connection to proximal SGD, suggesting that recent advances in the proximal SGD literature can be leveraged to design even faster algorithms for \eqref{mainProb}. To demonstrate this idea in practice, we next present the algorithm that skips the step of solving QP in the intermittent iterative steps. In the next section, we will consider the finite-sum variant of \eqref{mainProb} and develop an accelerated and variance-reduced version of SSQP. The bounds in Thm. \ref{prox-thm} depends on $\gamma$ through $L=\max\{\gamma L_g,L_f\}$. Therefore, choosing an overly conservative penalty parameter can worsen the theoretical complexity. Adaptive selection of $\gamma$, for example by increasing it based on the dual value of the per-iteration subproblem as done in deterministic SQP methods, may improve practical performance. Developing such an adaptive stochastic variant with matching guarantees is left for future work.
	

	\subsection{SSQP-Skip algorithm}
	We now consider a situation when solving the QP with $m$ linear constraints is more expensive than evaluating  $\nabla f_{i_t}(\cdot)$ for a random $i_t \in \bc{1,...,n}$. This may be the case, for instance, when $m$ is large,  or if the proximal operator with respect to the regularizer $h$ is complicated, e.g., when $h$ is an indicator function corresponding to complicated set constraints. In such situations, it may be desirable to have an algorithm that allows one to skip solving the QP at most iterations. To this end, we put forth the SSQP-Skip algorithm which, for smooth and strongly convex functions, requires only $\O{1/\sqrt{\epsilon}}$ calls to the QMO, as opposed to the $\O{1/\epsilon}$ calls required by Algorithm \ref{ssqp} as per Theorem \ref{prox-thm}. 
	
	The proposed SSQP-Skip algorithm builds upon a similar SProxSkip algorithm from \cite{mishchenko2022proxskip}, but incorporates constraints and yields slightly better bounds. Specifically, we maintain an auxiliary variable $\y_t$ that is used in place of $\nabla f_{i_t}(\x_t)$ in \eqref{proxlin-x}, resulting in the updates:
	\begin{align}
		\hx_{t+1} &= \argmin_{\u\in\Rn^d} \Bigl\{\ip{\y_t}{\u} + h(\u) + \frac{p_t}{2\eta_t}\norm{\tx_{t+1} - \u}^2 \nonumber\\
		&\hspace{-5mm} + \gamma \max\{[g_k(\tx_{t+1}) + \ip{\nabla g_k(\tx_{t+1})}{\u-\tx_{t+1}}]_+\}\Bigr\} \label{ssqp-skip1}\\
		&\hspace{-6mm}=\argmin_{\u \in \Rn^d, v\geq 0} \bigl\{\ip{\y_t}{\u} + h(\u) + \frac{p_t}{2\eta_t}\norm{\tx_{t+1} - \u}^2 + \gamma v\bigr\} \nonumber\\
		&\hspace{-2mm}\text{s. t. } g_k(\tx_{t+1}) + \ip{\nabla g_k(\tx_{t+1})}{\u-\tx_{t+1}} \leq v, ~ 1\leq k \leq m,\nonumber
	\end{align}
	which are carried out with probability $p_t \ll 1$. Observe that compared to \eqref{proxlin-x}, the update in \eqref{ssqp-skip1} also utilizes a modified stepsize parameter $\eta_t/p_t$ and entails linearizing $g_k$ around $\tx_{t+1}$, which is given by 
	\begin{align}
		\tx_{t+1} &= \x_t - \eta_t\br{\nabla f_{i_t}(\x_t) - \y_t}. \label{sgd-up}
	\end{align}
	For the subsequent iteration, we update $\x_{t+1} = \hx_{t+1}$ when \eqref{ssqp-skip1} is evaluated (with probability $p_t$) and keep $\x_{t+1} = \tx_{t+1}$ otherwise (with probability $1-p_t$). Finally, the auxiliary variable $\y_{t+1}$ is kept the same when the QP is not solved, but updated whenever the QP is solved. The proposed approach is summarized in  Algorithm \ref{ssqp-skip}. Clearly, when $p_t$ is small, Algorithm \ref{ssqp-skip} needs to solve the QP in \eqref{ssqp-skip1} only rarely.

	\begin{algorithm}
		\caption {SSQP-Skip}
		\begin{algorithmic}[1]
			\State\textbf{Input:} $\x_0 \in \cK$, $\y_0 = \nabla f_{i_0}(\x_0)$, $\gamma = \bt/\nu$, $p >0$, and $\eta_t\in(0,1]$ and $i_0$ is a random index. 
			\State\textbf{for} $t = 0,1,...,T-1$
			\State\hspace{3mm} Sample $i_t$ randomly 
			\State\hspace{3mm} Evaluate $\tx_{t+1}$ as per \eqref{sgd-up}
			\State\hspace{3mm} Sample $w_t \sim $ Bernoulli$(p_t)$ and update
			\begin{align}
				\x_{t+1} &= w_t\hx_{t+1}  + (1-w_t)\tx_{t+1}, \label{ssqp-up}
			\end{align}
			\hspace{4mm}where $\hx_{t+1}$ is calculated as per \eqref{ssqp-skip1}.
			\State\hspace{3mm} Update $\y_{t+1} = \y_t + \frac{p_t}{2\eta_t}\br{\x_{t+1}-\tx_{t+1}}$
			\State\textbf{end for}
			\State\textbf{Output:} $\x_T$.	
		\end{algorithmic}
		\label{ssqp-skip}
	\end{algorithm} 
	
	Having detailed the proposed SSQP-Skip algorithm, the following theorem characterizes its performance.
	\begin{theorem}\label{ssqp-skip-thm}
		Under Assumptions \ref{slater}-\ref{init}, and $L = \max\{\gamma L_g,L_f\}$, $\kappa = L/\mu$, $\eta_t = \frac{2}{\mu(t+1+\omega)}$ for $\omega = \lfloor4\kappa^2\rfloor$, and $p_t = \sqrt{2\mu\eta_t}$, we have the bound
		\begin{align}\label{skipbound}
			\E{\norm{\x_T - \x_\star}^2} \leq \tfrac{8\sigma^2}{\mu^2T} + \tfrac{4\kappa^4((1+4\kappa^2)\mu^2\delta_0+4\sigma^2)}{\mu^2T^2},
		\end{align}
		implying an SFO complexity of $\O{\frac{\sigma^2}{\mu^2\epsilon} + \kappa^2\tfrac{\kappa\sqrt{\delta_0}+\sigma}{\sqrt{\epsilon}}}$ and a QMO complexity of $\O{\frac{\sigma}{\kappa + \mu\sqrt{\epsilon}} + \frac{\kappa\sqrt{\kappa\sqrt{\delta_0}+\sigma}}{\epsilon^{1/4}}}$. 
	\end{theorem}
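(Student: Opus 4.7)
The plan is to extend the control-variate analysis of SProxSkip \cite{mishchenko2022proxskip} to the linearized-constraint setting of SSQP-Skip. First, I would identify a reference vector $\y_\star$ serving as the ``stationary'' value of the auxiliary variable. Invoking the KKT conditions of the exact penalty reformulation \eqref{penalty1} (applicable because Assumption \ref{slater} yields $\norm{\mub_\star}_1 \leq \bt/\nu \leq \gamma$), there exist subgradients such that $\nabla f(\x_\star) + \partial h(\x_\star) + \gamma\,\partial\max\bc{[g_k(\x_\star)]_+}$ contains zero. Selecting $\y_\star := \nabla f(\x_\star)$, one can check that if the algorithm were started at $\x_t = \x_\star$ with $\y_t = \y_\star$ and a deterministic gradient, the update \eqref{sgd-up} would give $\tx_{t+1} = \x_\star$ and the QP \eqref{ssqp-skip1} would return $\hx_{t+1} = \x_\star$.

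Next, I would introduce a Lyapunov function of the form
\begin{align*}
\Phi_t := \EE\norm{\x_t - \x_\star}^2 + \tfrac{4\eta_t^2}{p_t^2}\,\EE\norm{\y_t - \y_\star}^2,
\end{align*}
whose coefficient is tailored to absorb the $p_t/(2\eta_t)$ factor appearing in the $\y$-update. The one-step analysis splits by the value of the coin $w_t$. When $w_t = 0$ (skip), the iterate $\x_{t+1} = \tx_{t+1}$ is a bias-corrected SGD step whose contraction follows from $\mu$-strong convexity combined with the variance bound in \eqref{gradvar1}. When $w_t = 1$ (QP solved), I would adapt the prox-linear one-step inequality used in the proof of Theorem \ref{prox-thm} (Lemma \ref{step}) by replacing $\nabla f_{i_t}(\x_t)$ with $\y_t$, and by bounding the linearized penalty $\gamma\max\bc{[g_k(\tx_{t+1})+\ip{\nabla g_k(\tx_{t+1})}{\x_\star-\tx_{t+1}}]_+}$ above by $\gamma\max\bc{[g_k(\x_\star)]_+}=0$ using the convexity of each $g_k$. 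Young's inequality then produces an error term of order $\eta_t\norm{\y_t-\y_\star}^2$ that is absorbed by the $\y$-part of $\Phi_t$.

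Taking expectation over $w_t\sim\text{Bernoulli}(p_t)$ and combining the two cases, I expect a recursion of the form $\Phi_{t+1}\leq(1-\mu\eta_t)\Phi_t+C\eta_t^2\sigma^2$, which under the prescribed schedule $\eta_t=2/(\mu(t+1+\omega))$ with $\omega=\lfloor 4\kappa^2\rfloor$ and $p_t=\sqrt{2\mu\eta_t}$ can be unrolled inductively (mirroring the standard $\O{1/T}$ rate for strongly convex SGD) to yield \eqref{skipbound}. The SFO complexity then follows by solving \eqref{skipbound} for the required $T$, while the QMO complexity emerges from the expected number of triggered QP solves $\sum_{t=1}^T p_t = \O{\sqrt{T/\kappa}}$.

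The main obstacle will be calibrating the Lyapunov coefficient and the skipping probability so that the $\norm{\y_t-\y_\star}$ terms produced in the two branches of $w_t$ cancel exactly with the scheduled decrease of $4\eta_t^2/p_t^2$; the prescription $p_t=\sqrt{2\mu\eta_t}$ is dictated precisely by this balance. A secondary subtlety, absent in the unconstrained SProxSkip proof, is the treatment of the non-smooth linearized penalty inside the QP subproblem, which requires a dedicated convexity argument at $\x_\star$ to avoid losing a factor depending on the KKT multipliers $\mub_\star$.
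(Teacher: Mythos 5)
Your overall strategy coincides with the paper's: the reference point $\y_\star=\nabla f(\x_\star)$ characterized through the fixed-point/KKT condition of \eqref{penalty1}, a Lyapunov function coupling $\norm{\x_t-\x_\star}^2$ with an $(\eta_t/p_t)^2$-scaled $\norm{\y_t-\y_\star}^2$, convexity of each $g_k$ at the feasible $\x_\star$ to annihilate the linearized penalty, the choice $p_t=\sqrt{2\mu\eta_t}$, and a diminishing-stepsize unrolling of a recursion of the form $\Psi_{t+1}\le(1-\mu\eta_t)\Psi_t+2\eta_t^2\sigma^2$. Two points need correction, one of them substantive. First, the Lyapunov coefficient should be $\tfrac{2\eta_t^2}{p_t^2}=\tfrac{\eta_t}{\mu}$, not $\tfrac{4\eta_t^2}{p_t^2}$; this only affects constants. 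Second, and more importantly, Young's inequality cannot be used on the cross term $\ip{\hx_{t+1}-\x_\star}{\y_t-\y_\star}$ arising in the QP branch. The control variate $\y_t$ has no intrinsic drift toward $\y_\star$ (it moves only when the coin fires), so the sole source of the factor $(1-\mu\eta_t)$ multiplying the $\y$-part of the Lyapunov function is an exact algebraic cancellation: in expectation over $w_t$, the cross term $\tfrac{p_t^2}{\eta_t}\ip{\hx_{t+1}-\tx_{t+1}}{\y_t-\y_\star}$ generated by the $\y$-update, once rescaled by $\tfrac{2\eta_t^2}{p_t^2}$, cancels against the $-2\eta_t\ip{\hx_{t+1}-\x_\star}{\y_t-\y_\star}$ produced by the three-point inequality for the QP (Lemma \ref{prelim-skip}), leaving $-2\eta_t\ip{\tx_{t+1}-\x_\star}{\y_t-\y_\star}$; combining this with the $+\eta_t^2\norm{\y_t-\y_\star}^2$ that appears when expanding $\norm{\tx_{t+1}-\x_\star}^2$ via \eqref{sgd-up} nets out to exactly $-\eta_t^2\norm{\y_t-\y_\star}^2$, i.e.\ a multiplicative factor $1-\tfrac{p_t^2}{2}=1-\mu\eta_t$ on the $\y$-term (this is precisely \eqref{onestprec}). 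Replacing any of these inner products by a one-sided Young bound of order $\eta_t\norm{\y_t-\y_\star}^2$ leaves a $\y$-part that does not contract, and the recursion $\Phi_{t+1}\le(1-\mu\eta_t)\Phi_t+C\eta_t^2\sigma^2$ fails. You do identify this calibration as the main obstacle and state that the terms must ``cancel exactly,'' so the repair is within reach, but as written the Young step is the one place the plan breaks.
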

	
	The result in Theorem \ref{ssqp-skip-thm} is the first of its kind in the context of constrained optimization, and the proof is provided in Appendix \ref{pf-ssqp-skip-thm}. The bound in Theorem \ref{ssqp-skip-thm} is even better than that obtained for the corresponding unconstrained problem in \cite{mishchenko2022proxskip}. Specifically, the proof of Algorithm \ref{ssqp-skip} proceeds in a similar manner and obtains a similar recursion as that in \cite[Lemma C.2]{mishchenko2022proxskip}. However, we utilize diminishing stepsizes to avoid the $\log(T)$ term that appears in \cite[Corollary 5.6]{mishchenko2022proxskip}.
	
	The update rule in Alg. \ref{ssqp-skip} and the statement of Theorem \ref{ssqp-skip-thm} allow us characterize the amortized per-iteration cost of SSQP-Skip. First, observe that the base cost of non-QMO iterations in Alg. \ref{ssqp-skip} is $\cO(1)$, while that of QMO-iterations, that occur with probability $p_t \sim \cO(t^{-1/2})$ is $\cO(m^3)$. Averaging over the first $T$ iterations gives the amortized expected cost of $\cO(1 + \frac{m^3}{\sqrt{T}})$. Hence, to reach $\epsilon$-close to the optimum, the amortized expected per-iteration cost becomes $\cO(1+m^3\sqrt{\epsilon})$. Compared to primal-dual methods, whose per-iteration cost is typically $\cO(m)$, SSQP-Skip is faster per-iteration when $m \leq \epsilon^{-1/4}$. 
	
	\section{Variance-Reduced Accelerated SSQP Algorithm} \label{sec-avrssqp}
	In this section, we focus on the special case of \eqref{mainProb} where $f$ has a finite-sum structure and $i_t \in \{1, \ldots, n\}$ for moderately large $n$. In the finite-sum case, we show that variance reduction can be applied to Algorithm \ref{ssqp} so as to obtain an improved dependence of the SFO complexity on $\epsilon$. In particular, we build upon the VARAG algorithm from \cite{lan2019unified} to propose the novel accelerated variance-reduced SSQP (VARAS) algorithm. As we shall show later, the performance of the proposed algorithm is also similar to that of VARAG. 
	
	The proposed updates are summarized in Algorithm \ref{var-red-ssqp}, and follow a similar structure as that of VARAG, except that the proximal step is replaced with a constrained minimization step similar to \eqref{proxlin-x}. The proposed algorithm entails several passes over the data. At the $s$-th epoch or pass, the algorithm needs the full gradient $\nabla f(\tx_{s-1})$, which is used to correct the stochastic gradient of each data point. Specifically, the $t$-th iteration of the $s$-th epoch utilizes a random $i_t \in \{1, \ldots, n\}$ and  $\nabla f(\tx_{s-1})$ to construct an unbiased estimate $\nt_t$ of $\nabla f(\y_t)$ with a variance that decreases with $s$. 
	
	We remark that although there exist several variance-reduced and accelerated stochastic optimization algorithms, we specifically selected VARAG, given its good performance and a flexible structure that allows for easy modifications. Indeed, both Katyusha acceleration \cite{allen2017katyusha} as well as related  negative momentum acceleration techniques, such as those in \cite{shang2018asvrg}, cannot be applied here for the general convex case, as they utilize a penalty parameter within the proximal operator that is required to be small or diminishing. For instance, Katyusha uses a penalty parameter $\alpha_s \sim \O{1/s}$ where $s$ is the epoch index \cite{allen2017katyusha}, and likewise, ASVRG uses $\beta_s \sim \O{1/s}$. Hence the minimization subproblem at each iteration would only be $\O{1/s}$-strongly convex, which would not be sufficient to counter the term arising from the application of the quadratic upper bound \eqref{qub} on $g_k$. Also note that while the VRADA algorithm proposed in \cite{song2020variance} achieves the best known SFO complexity in the unconstrained setting, it uses a recursively defined estimate sequence, which cannot be extended to the present setting because of the penalty term. 
	
	\begin{algorithm}
		\caption {VARAS: VAriance-Reduced Accelerated SSQP}
		\begin{algorithmic}[1]
			\State\textbf{Input:} $\x_0\in \cK \subset\Rn^d$, $\gamma = \bt/\nu$, $T_s$, $\alpha_s$, $\omega_s$, $\beta_s$, and $\theta_t$
			\State Set $\tx_0 = \z_0 = \x_0$
			\State\textbf{for} $s =1,2,...,S$
			\State\hspace{3mm} Calculate $\nabla f(\tx_{s-1})$
			\State\hspace{3mm} Set $\x_0 = \tx_{s-1}$ 
			\State\hspace{3mm} \textbf{for} $t =1,2,...,T_s$
			\State\hspace{6mm} Sample $i_t$ randomly from $\bc{1,...,n}$
			\State\hspace{6mm} Update
			\noindent\begin{align}
				\y_t &= \frac{(1+\mu\beta_s)(1-\alpha_s - \omega_s)\x_{t-1} + \alpha_s\z_{t-1}}{1+\mu\beta_s(1-\alpha_s)} \nonumber\\
				& \hspace{2cm}+ \frac{(1+\mu\beta_s)\omega_s}{1+\mu\beta_s(1-\alpha_s)}\tx_{s-1}\label{avr-yup}\\
				\z_{t-1}^+ &= \frac{1}{1+\mu\beta_s}\br{\z_{t-1} + \mu\beta_s\y_t}\label{avr-zplsup}\\
				\nt_t &= \nabla f_{i_t}(\y_t) - \nabla f_{i_t}(\tx_{s-1}) + \nabla f(\tx_{s-1}) \label{avr-grad-up}
			\end{align}
			\begin{align}
				\z_t = \argmin_{\u \in \Rn^d} &\Bigl\{\alpha_s\beta_s\br{\ip{\nt_t}{\u} + \frac{\mu}{2}\norm{\y_t - \u}^2 +  h(\u)} \nonumber\\
				&+ \frac{\alpha_s}{2}\norm{\z_{t-1}-\u}^2  \label{avr-zup}\\
				&\hspace{-1cm}+ \gamma\beta_s \max\{[g_k(\y_t) + \alpha_s\ip{\nabla g_k(\y_t)}{\u - \z_{t-1}^+}]_+\}\Bigr\} \nonumber
			\end{align}
			\begin{align}
				\x_t &= (1-\alpha_s - \omega_s)\x_{t-1} + \alpha_s \z_t + \omega_s \tx_{s-1} \label{avr-xup}
			\end{align}
			\State\hspace{3mm} \textbf{end for}
			\State\hspace{3mm} Set $\tx_s = \br{\sum_t \theta_t \x_t}/\sum_t\theta_t$
			\State\hspace{3mm} Reset $\z_0 = \z_{T_s}$
			\State\textbf{end for}
			\State\textbf{Output:} $\bar{\x} = \tx_S$.
		\end{algorithmic}
		\label{var-red-ssqp}
	\end{algorithm} 
	
	The following theorem, whose proof is provided in the supplementary material, establishes the oracle-complexity bounds for VARAS.
	\begin{theorem}\label{avr-thm}
		Under Assumptions \ref{slater}-\ref{init}, let $L_\gamma := L_f + \gamma L_g$, $\kappa := L_\gamma/\mu$, 
		$D_0  := 2B_{\gamma} + \frac{3L_\gamma}{2}B_x \geq  2\br{\EE F(\tx_0) - F(\x_\star)} + \tfrac{3L_\gamma}{2}\EE\norm{\z_{0} - \x_\star}^2$, $s_0:= \lfloor\log n\rfloor + 1$, $\beta_s = \frac{1}{3\alpha_sL_\gamma}$, $\omega_s = \frac{1}{2}$, and $T_s = 2^{s-1}$ for $s \leq s_0$ and $T_s = T_{s_0}$ for $s > s_0$. Then, we have the following oracle complexity bounds. 
		\begin{enumerate}
			\item When $f_i$ are convex and $\alpha_s = \min\{\tfrac{1}{2},\frac{2}{s-s_0+4}\}$ and
			\begin{align}
				\theta_t &= \begin{cases}
					\frac{\beta_s}{\alpha_s}(\alpha_s+\omega_s)& 1\leq t\leq T_s-1\\
					\frac{\beta_s}{\alpha_s}& t = T_s ,
				\end{cases}\label{thetacond}
			\end{align}
			then the oracle complexity of Algorithm \ref{var-red-ssqp} is given by 
			\begin{align}
				N_{\text{SFO}} &= \begin{cases}
					\cO\br{n\log{\frac{D_0}{\eps}}}& n \geq \frac{D_0}{\epsilon},\\ 
					\cO\br{n\log{n} + \sqrt{\frac{nD_0}{\epsilon}} } & n < \frac{D_0}{\epsilon},
				\end{cases}\label{avr-c-res}\\
				N_{\text{QMO}} &= \begin{cases}
					\cO\br{\frac{D_0}{\eps}}& \hspace{1.5cm} n \geq \frac{D_0}{\epsilon},\\ 
					\cO\br{\sqrt{\frac{nD_0}{\epsilon}} } & \hspace{1.5cm} n < \frac{D_0}{\epsilon}.
				\end{cases}\label{avr-c-res-q}
			\end{align} 
			\item When $f_i$ are $\mu$-strongly convex, let $\alpha_s = \min\{\tfrac{1}{2},\max\bc{\frac{2}{s-s_0+4}, \min\bc{\sqrt{\frac{n}{3\kappa }},\frac{1}{2} } }\}$ and $\theta_t$ is set as in \eqref{thetacond} if $1\leq s \leq s_0$ or $s_0< s\leq s_0 + \sqrt{\frac{12\kappa}{n}} -4$, $n < \frac{3\kappa}{4}$. Otherwise set as 
			\begin{align}
				\theta_t = \begin{cases}
					\Gamma_{t-1} - (1-\alpha_s-\omega_s)\Gamma_t& 1\leq t\leq T_s-1\\ 
					\Gamma_{t-1}& t = T_s ,
				\end{cases}
			\end{align} 
			where $\Gamma_{t} = \br{1+\mu\beta_s}^t$. Then, the oracle complexity is given by
		\end{enumerate}
		\begin{align}
			N_{\text{SFO}} &= \begin{cases}
				\cO\br{n\log{\frac{D_0}{\eps}}} \hspace{10mm} n \geq \frac{D_0}{\epsilon} \text{ or } &n \geq \frac{3\kappa}{4},\\ 
				\cO\br{n\log{n} + \sqrt{\frac{nD_0}{\epsilon}} } & n < \frac{D_0}{\epsilon} \leq \frac{3\kappa}{4}, \\
				\cO\br{n\log{n} + \sqrt{n\kappa}\log{\frac{4D_0}{3\kappa\epsilon}}} & n < \frac{3\kappa}{4} \leq \frac{D_0}{\epsilon},
			\end{cases}\\
			N_{\text{QMO}} &= \begin{cases}
				\cO\br{\frac{D_0}{\eps}} &n \geq \frac{D_0}{\eps},\\ 
				\cO\br{n\log\tfrac{D_0}{\eps}}&\frac{3\kappa}{4}< n \leq\frac{D_0}{\eps} ,\\
				\cO\br{\sqrt{\frac{nD_0}{\epsilon}} } & n < \frac{D_0}{\epsilon} \leq \frac{3\kappa}{4} ,\\
				\cO\br{n\log{n} + \sqrt{n\kappa}\log{\frac{4D_0}{3\kappa\epsilon}}} & n < \frac{3\kappa}{4} \leq \frac{D_0}{\epsilon}.
			\end{cases}
		\end{align}
	\end{theorem}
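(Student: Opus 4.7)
The plan is to follow the VARAG analysis of \cite{lan2019unified}, while carefully handling the nonsmooth penalty $\gamma\max\{[g_k(\cdot)]_+\}$ inside $F$. The core of the argument is an epoch-level one-step inequality for the $\z_t$-update in \eqref{avr-zup}, analogous in spirit to the one-step bound underlying the proof of Theorem \ref{prox-thm}, but now adapted to a variance-reduced gradient estimator and to the outer convex combination defining $\x_t$ via \eqref{avr-xup}. Throughout, $L_\gamma = L_f + \gamma L_g$ serves as the effective smoothness constant of $F - h$.

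First I would write down the first-order optimality condition for the $(\alpha_s + \alpha_s\beta_s\mu)$-strongly convex minimizer $\z_t$ in \eqref{avr-zup}. Using convexity of $h$ and of $\u \mapsto \max\{[g_k(\y_t)+\alpha_s\ip{\nabla g_k(\y_t)}{\u-\z_{t-1}^+}]_+\}$, this yields a three-point inequality that bounds $\alpha_s\beta_s\ip{\nt_t}{\z_t-\u}$, together with the penalty and proximal terms at $\z_t$, by the same quantities at any comparator $\u$ plus $\tfrac{\alpha_s}{2}(\norm{\z_{t-1}-\u}^2 - \norm{\z_t-\u}^2 - \norm{\z_t-\z_{t-1}}^2)$. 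A suitable choice of $\u$ — essentially the one that rewrites $\alpha_s\z_t + (1-\alpha_s-\omega_s)\x_{t-1} + \omega_s\tx_{s-1}$ as $\x_t$ via \eqref{avr-xup} — converts the left-hand side into a term involving $F(\x_t) - F(\x_\star)$, after invoking the quadratic upper bound $\gamma\max\{[g_k(\u)]_+\} \leq \gamma\max\{[g_k(\y_t)+\ip{\nabla g_k(\y_t)}{\u-\y_t}]_+\} + \tfrac{\gamma L_g}{2}\norm{\u-\y_t}^2$ and the $L_f$-smoothness of $f$. The stochastic-gradient error is handled via the SVRG identity $\Et{\norm{\nt_t - \nabla f(\y_t)}^2} \leq 2L_f D_f(\tx_{s-1}, \y_t)$, which delivers variance reduction without needing Assumption \ref{gradnoi}. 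The choice $\beta_s = 1/(3\alpha_s L_\gamma)$ is calibrated so that the $L_\gamma$-quadratic majorant on $F - h$ is dominated by $\tfrac{\alpha_s}{2}\norm{\z_{t-1}-\u}^2$, while the factors $1+\mu\beta_s$ built into \eqref{avr-yup}--\eqref{avr-zplsup} are exactly the ones needed to absorb the $\tfrac{\mu}{2}\norm{\y_t-\u}^2$ strong-convexity term into a contracting proximal recursion.

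With the one-step inequality in hand, I would telescope over $t = 1,\ldots,T_s$ using the weights $\theta_t$, and then across epochs. These weights are designed so that $\theta_{t-1} \geq (1-\alpha_s-\omega_s)\theta_t$ in the convex case, and equal $\Gamma_{t-1} = (1+\mu\beta_s)^{t-1}$ in the strongly convex case. Telescoping produces an epoch-level recursion of the form $\EE[\Lambda_s] \leq q_s\,\EE[\Lambda_{s-1}]$, where $\Lambda_s$ is a potential combining $F(\tx_s) - F(\x_\star)$ with $\norm{\z_{T_s}-\x_\star}^2$ at the end of epoch $s$. For $s \leq s_0 = \lfloor\log n\rfloor+1$, the doubling $T_s = 2^{s-1}$ with $\alpha_s = 1/2$, $\omega_s = 1/2$ produces the geometric warmup contraction $q_s \leq 1/2$ familiar from VARAG. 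For $s > s_0$ in the convex case, $\alpha_s \asymp 2/(s-s_0+4)$ converts the recursion into an $\cO(1/s^2)$ rate. For $s > s_0$ in the strongly convex case with $n < 3\kappa/4$, setting $\alpha_s = \sqrt{n/(3\kappa)}$ makes $q_s$ a constant strictly less than one and yields an accelerated linear rate with base $1-\sqrt{n/\kappa}$; when instead $n \geq 3\kappa/4$, the warmup choice $\alpha_s = 1/2$ already suffices and SVRG-style linear convergence is retained.

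Finally, I would convert the bound on $\Lambda_s$ into oracle complexities. Each epoch costs $n + T_s$ SFO calls (one full gradient at $\tx_{s-1}$ and $T_s$ stochastic evaluations) and $T_s$ QMO calls; summing over the required number of epochs and inverting the convergence rate to reach $\eps$-accuracy yields the four cases stated in the theorem, the split reflecting (i) whether the warmup phase $s \leq s_0$ already achieves the target accuracy and (ii) in the strongly convex setting, whether $n \gtrless 3\kappa/4$ determines if acceleration is activated. The main obstacle, in my view, will be the opening three-point inequality: unlike in the proximal case, the constraint is linearized inside \eqref{avr-zup} at base point $\y_t$ with shift $\z_{t-1}^+$, so relating $\gamma\max\{[g_k(\y_t)+\alpha_s\ip{\nabla g_k(\y_t)}{\z_t-\z_{t-1}^+}]_+\}$ back to $\gamma\max\{[g_k(\x_t)]_+\}$ on the left-hand side requires exploiting convexity of each $g_k$ and of $[\cdot]_+$ together with the specific weights in \eqref{avr-xup}; it is this step that dictates the presence of $\alpha_s$ inside the constraint linearization and the specific role of $\z_{t-1}^+$ in the scheme.
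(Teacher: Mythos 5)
Your proposal follows essentially the same route as the paper: a one-step inequality for the $\z_t$-update (mirroring \cite[Lemma 6]{lan2019unified}) in which the linearized penalty at $\z_t$ is converted to $\gamma\max\{[g_k(\x_t)]_+\}$ via $\x_t-\y_t=\alpha_s(\z_t-\z_{t-1}^+)$ and $L_g$-smoothness, the comparator term at $\x_\star$ is killed using convexity of $g_k$, $[\cdot]_+$, and the convex-combination identity $\y_t-\alpha_s\z_{t-1}^+=(1-\alpha_s-\omega_s)\x_{t-1}+\omega_s\tx_{s-1}$, the variance is controlled by $\Et{\|\nt_t-\nabla f(\y_t)\|^2}\leq 2L_fD_f(\tx_{s-1},\y_t)$, and the $\theta_t$-weighted telescoping plus epoch-level case analysis then proceeds exactly as in VARAG. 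You correctly identified the genuinely new technical step (the three-point inequality with the constraint linearized at $\y_t$ and shifted by $\z_{t-1}^+$), so no gap to report.
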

	
	The proof of Theorem \ref{avr-thm} begins by deriving a one-step inequality based on the update in \eqref{avr-zup} and the relationships among the parameters. The resulting inequality (see the supplementary material) matches the form of \cite[Lemma 6]{lan2019unified}, so the remaining arguments in \cite[Theorems 1–2]{lan2019unified} apply directly. Though the rates established in Theorem \ref{avr-thm} are the fastest, the most common case is when $\epsilon$ is small and VARAS achieves SFO complexity of $\O{\frac{1}{\sqrt{\epsilon}}}$ and $\O{\log\br{\frac{1}{\epsilon}}}$ for the convex and strongly convex cases, respectively. These rates are clearly superior to the best known rates for constrained problems in  \cite{xu2021iteration,lin2018levelDet,yu2017simple}. Before concluding the theoretical results, the following remark is due. 
	\begin{rem}
		\label{rem_flop}
		The chosen oracle model hides the computational costs for large $m$, since each SFO call returns $\{\nabla f_i(\x), \{g_k(\x), \nabla g_k(\x)\}_{k=1}^m\}$. Indeed, solving the QP at each iteration of SSQP may incur $\cO(m^3)$ floating point operations (flops), which is significantly more than the usual $\cO(m)$ flops incurred by similar primal--dual algorithms. While SSQP-Skip does allay this concern to an extent, it remains an open problem to see if we can design algorithms that work with only one (or a few) of the constraints at every iteration. 
	\end{rem}

	\section{Numerical Experiments}\label{sec-numerical}
	
	In this section, we analyze the performance of our proposed algorithms on two real-world problems and compare them with Adaptive Primal-Dual SGD (APriD) \cite{yan2022adaptive}, Generalized Online Convex Optimization (GOCO) \cite{yuan2018online}, primal--dual stochastic subgradient (PDSS) method \cite{madavan2021stochastic}, Stochastic Subgradient Projection (SSP)  \cite{necoara2022stochastic}, and SGD by one projection by a smoothing technique (SGDP-ST) \cite{mahdavi2012stochastic}. We also include RECOO from \cite{guo2022online} as a representative proximal baseline, since the prox-linear structure of our proposed algorithms makes it a relevant benchmark, particularly for SSQP and SSQP-Skip. As RECOO is designed for single-constraint problems, we adapt it to our setting using the standard aggregation $g(\x)=\max_k g_k(\x)$ \cite{mahdavi2012stochastic}. Each iteration then requires solving a general convex subproblem, leading to a substantially higher per-iteration cost than the QP subproblems used by SSQP and SSQP-Skip. Other older algorithms in Table \ref{constr-papers} are not included here as they were not directly comparable. For instance, the problem in  \cite{lan2020algorithms} special case of \eqref{mainProb} as it consider only a single constraint, while \cite{lin2018level} focuses on a nested finite-sum structure different from \eqref{mainProb}. The works in \cite{mahdavi2012stochastic} and \cite{bayandina2018mirror} did not provide any numerical results, making it difficult to tune hyperparameters and adapt these methods to the problems considered here.
	
	We remark that the purpose of this section is to illustrate the effects of constraints and examine the trade-offs between the solvers. These benchmarks are not exhaustive, since we do not carry out many scaling (i.e. examining the effect of $n$, $\kappa$) or other ablation studies. Such studies are outside the scope of the current work, and we do not expect them to yield any new insights beyond what we already know from existing literature. All experiments were run in MATLAB R2023a (Intel Core i7, 16 GB RAM), using \texttt{quadprog} for the intermediate QPs and CVX to solve the general convex optimization subproblems.
	
	\subsection{Trajectory generation for an unmanned surface vehicle}
	Here we consider Zermelo's navigation problem \cite{zermelo1931navigationsproblem} in an oceanic environment where the aim is to find two-dimensional energy-optimal trajectory for an unmanned surface vehicle (USV) operating in a rectangular region $\cX := \{\x \mid \norm{\x-\u}_\infty \leq r\}$. Let $\x(t) \in \Rn^2$ denote the position of the USV at discrete time $t \in \{1, \ldots, T\}$. Since the USV operates in a small and homogeneous area, we model the surface current velocity at coordinate $\y$ as an unknown linear function $\v(\y) = \W\y+\z$. However, exact information on the ocean current at each position is unavailable; rather, several oceanographic agencies~\cite{mercator, copernicus_marine} publish estimated measurements. As considered in \cite{yooEnsemble}, we seek to find an energy-efficient USV trajectory given the ensemble of ocean current estimates, denoted by $\{\W_i,\z_i\}_{i=1}^n$. 
	
	The energy consumption for a USV to move from $\x(t-1)$ to a nearby point $\x(t)$ scales cubically with the effective speed and can be modeled as $\norm{\x(t-1)-\x(t)-\v(\x(t))}^3$~\cite{jones2017planning}. Our goal is to minimize the total energy. If the maximum velocity of a USV is $s_{\max}$ and the maximum surface current speed $s_w \ll s_{\max}$, then the constraint $\norm{\x(t-1)-\x(t)} \leq v_{\max}:=s_{\max}-s_w$ ensures that the generated trajectories are feasible for the lower level controller. Therefore, the minimum expected-energy trajectory from the starting position $\p_{\text{start}}$ to the destination position $\p_{\text{dest}}$, where $\x$ collects all the coordinates across $T$ discrete time instances, is the solution to
	\begin{align}
		\min_{\x \in \Rn^{2T}} \frac{1}{n} &\sum_{i=1}^{n}\sum_{t=2}^{T}\norm{\x(t-1)-\x(t)-\W_i\x(t-1)-\z_i}^3 \nonumber\\
		\text{s. t. } & \x(1) = \p_{\text{start}}, \hspace{5mm} \x(T) = \p_{\text{dest}} 	\label{eq:USV_opti_prob}\\
		\hspace{-4mm} &\norm{\x(t-1)-\x(t)}^2 \leq v^2_{\max},  ~~2\leq t \leq T,\nonumber
	\end{align}
	which has the familiar finite-sum structure with convex objective and constraints as in \eqref{mainProb}.  
	
	To generate an ensemble $\{\W_i,\z_i\}_{i=1}^n$, we first randomly generate $\W\in \Rn^{2\times 2}$ and $\z\in \Rn^2$. Then for each $i \in \{1, \ldots, n\}$, we generate noisy velocities $\v_i(\y_j)$ as
	\begin{align}
		\label{noisy_velocity}
		\v_i(\y_j) &= (\I+\text{diag}(\vxi))(\W\y_j + \z)& j &= 1, 2, 3
	\end{align}
	at the three sample positions $\{\y_j\}_{j=1}^3$,  where, $\vxi \sim \cN\br{\zz, \I}$, and solve the system of equations in (\ref{noisy_velocity}) to get $\{\W_i,\z_i\}$. We consider a $200 \times 200$ square region (in arbitrary units). The USV needs to travel from $(20,20)$ to $(180, 180)$. We consider two different settings, (a) $n=100$, $T=40$, with speed limit $v_{max} = 8.1$ unit/s; and (b) $n=1000$, $T=100$ with speed limit $v_{max}=3.3$ unit/s. We initialized all the algorithms with the straight-line path joining $\p_{\text{start}}$ and $\p_{\text{dest}}$ with $T$ equidistant waypoints.
	
	\captionsetup[subfloat]{labelformat=empty}
	\begin{figure}[ht]
		\centering
		\subfloat{
			\includegraphics[width=0.23\textwidth]{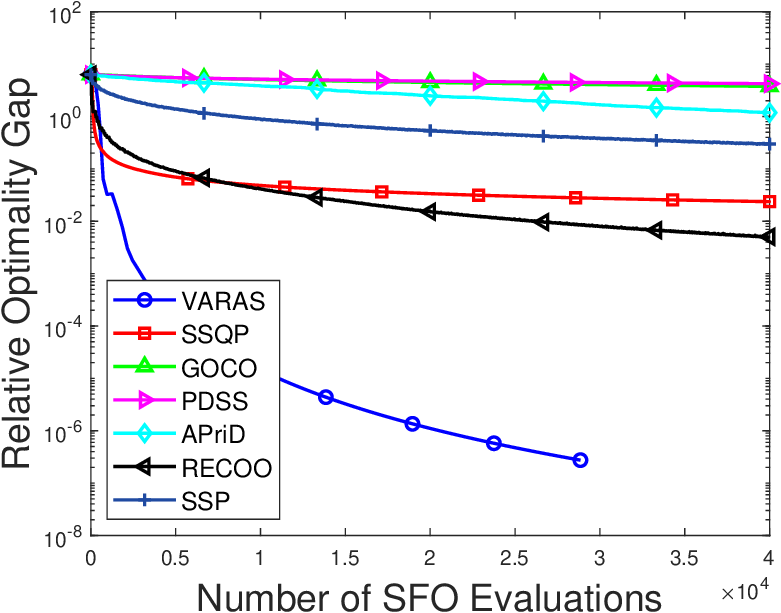}
		}
		\hfill
		\subfloat{
			\includegraphics[width=0.23\textwidth]{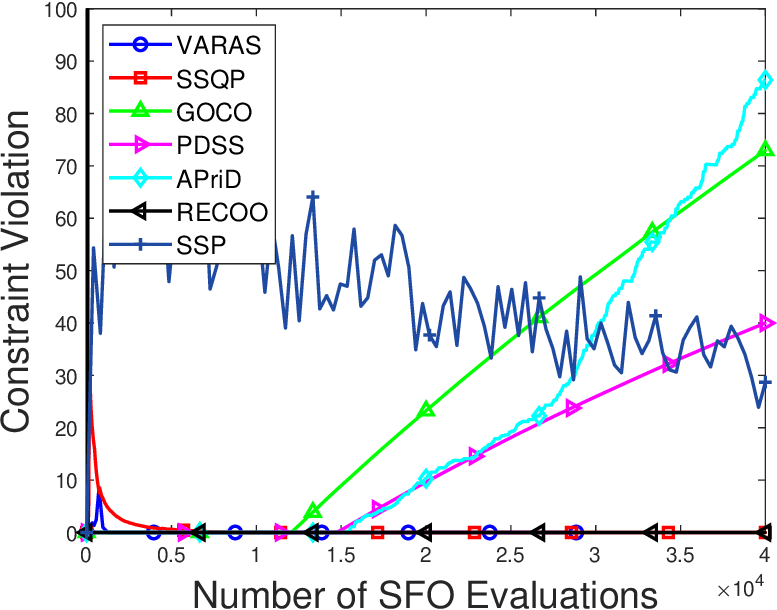}
		}
		\vspace{0.05cm}
		\subfloat{
			\includegraphics[width=0.23\textwidth]{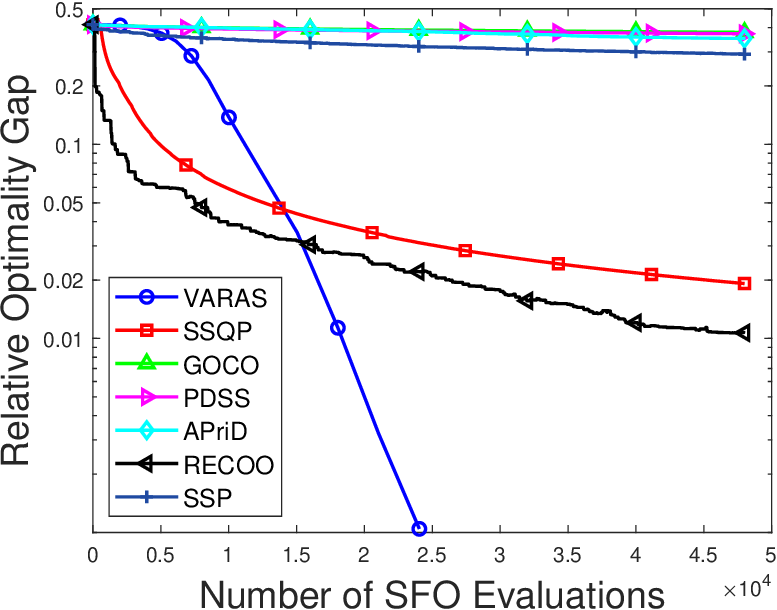}
		}
		\hfill
		\subfloat{
			\includegraphics[width=0.23\textwidth]{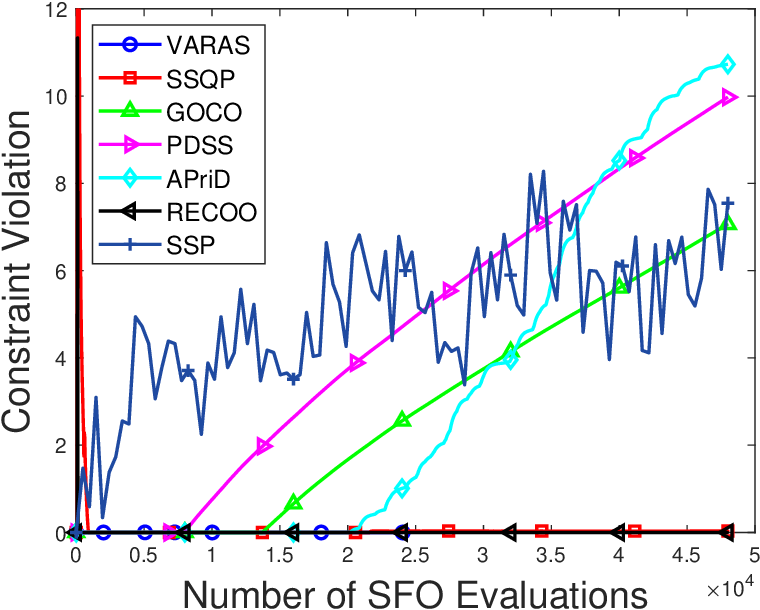}
		}
		\caption{Comparison of different methods with respect to the number of SFO evaluations. The top row corresponds to the setting (a) $n=100$ and $T=40$, while the bottom row corresponds to the setting (b) $n=1000$ and $T=100$.}
		\label{fig:comparison_usv}
	\end{figure}
	\begin{figure}[ht]
		\centering
		\includegraphics[width=0.55\textwidth]{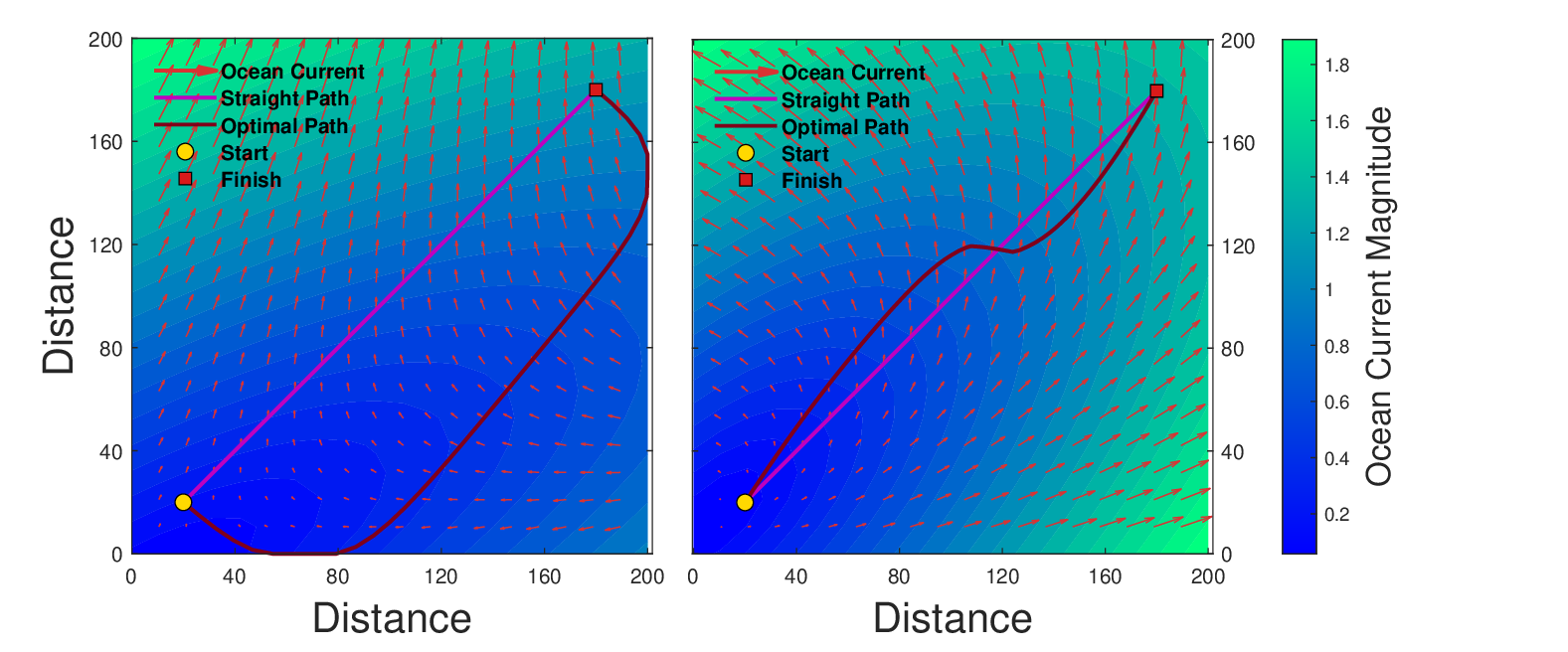}
		
		\caption{Straight path and optimal trajectories obtained VARAS. The left plot corresponds to setting (a) $n=100$, $T=40$ while the right plot corresponds to setting (b) $n=1000$, $T=100$.}
		\label{fig:optimal_path}
	\end{figure}
	
	
	\begin{table}[t]
		\centering
		\caption{Hyperparameter settings used in the trajectory generation experiments. The parameters $\eta_0$ and $\eta_{\max}$ are specific to SSQP, while $L_\gamma$ is specific to VARAS.}
		\label{tab:traj_hyperparams}
		\begin{tabular}{c c c c c}
			\hline
			Setting & Algorithm & $\eta_0$, $\eta_{max}$ / $L_\gamma$ & $\gamma$ & Mini-batch Size \\
			\hline
			\multirow{2}{*}{(a)}
			& SSQP & $0.009$, $0.009$ & $6\times10^{5}$ & $4$ \\
			& VARAS & $350$ & $10^{6}$ & -- \\
			\hline
			
			\multirow{2}{*}{(b)}
			& SSQP & $10^{-3}$, $5\times 10^{-5}$ & $10^{9}$ & $8$ \\
			& VARAS & $25\times10^{3}$ & $10^{12}$ & -- \\
			\hline
		\end{tabular}
	\end{table}
			%
	
			%
			%
	
	\begin{figure}[ht]
		\centering
		\includegraphics[width=0.4\textwidth]{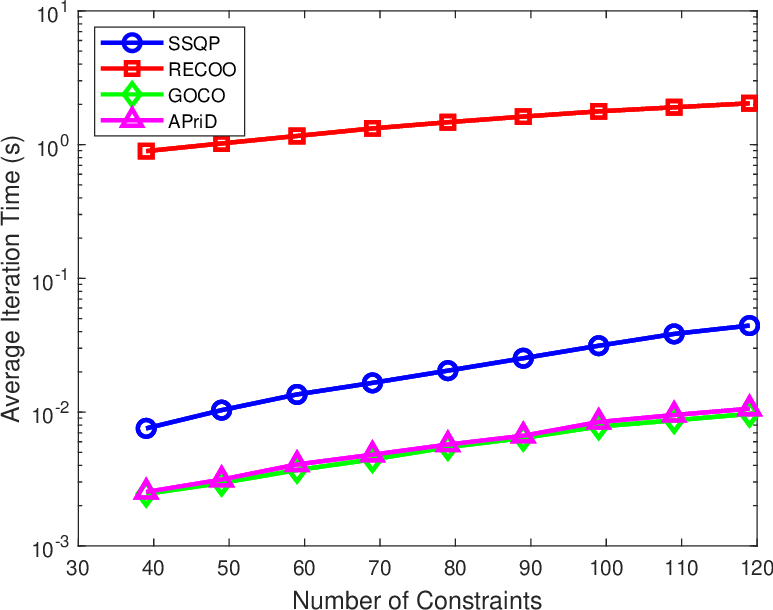}
		\caption{Average iteration time versus the number of constraints for $n=1000$. The parameter $T$ is varied from $40$ to $120$ in increments of $10$.}
		\label{fig:scaling_with_constraints}
	\end{figure}
	
	\begin{figure}[ht]
		\centering
		\subfloat[]{
			\includegraphics[width=0.23\textwidth]{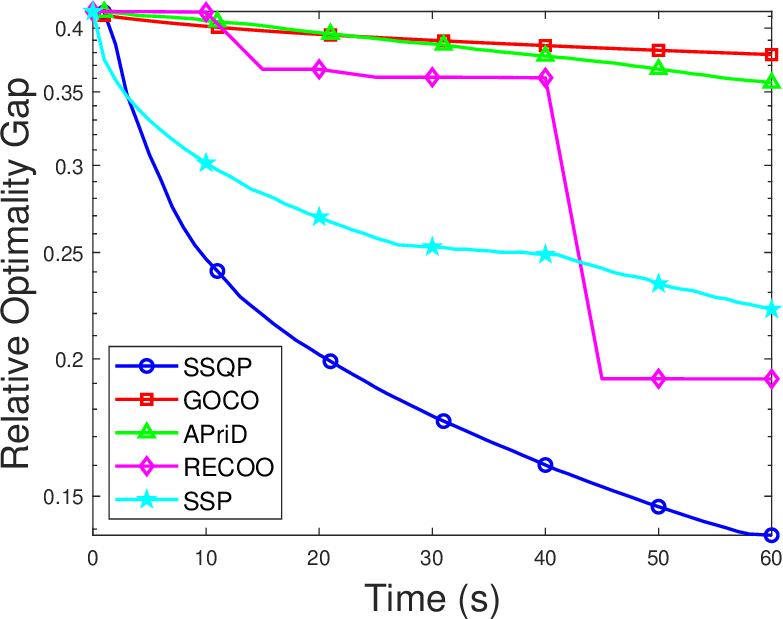}
			
		}
		\hfill
		\subfloat[]{
			\includegraphics[width=0.23\textwidth]{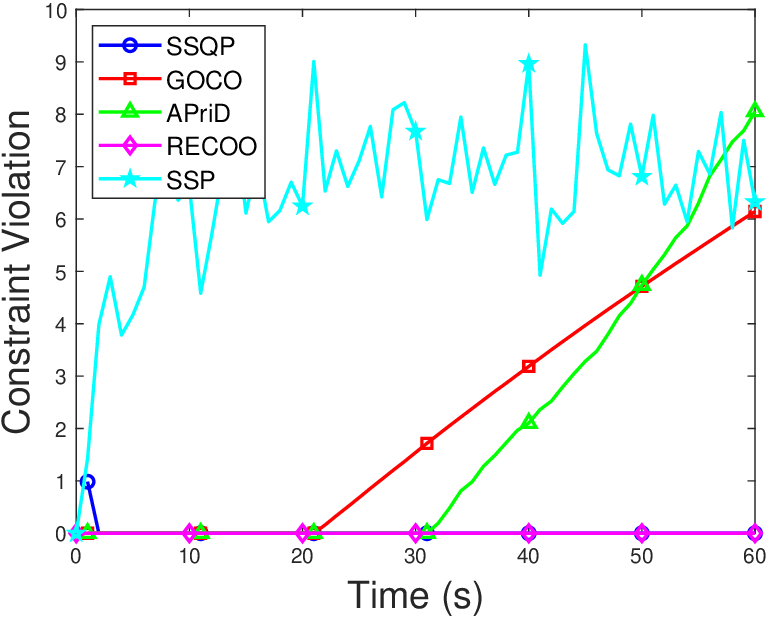}
			
		}
		\caption{Comparison of convergence behavior in terms of wall-clock time for $n=1000$, $T=100$.}
		\label{fig:conv_with_time}
	\end{figure}
	
	\subsubsection{Hyperparameters} The hyperparameters of each algorithm are tuned to achieve the best performances.  For SSQP, we used the learning rate schedule $\eta_t = \min\{\eta_0/\sqrt{t}, \eta_{\max}\}$. The hyperparameter values that yielded the best performance are reported in Table~\ref{tab:traj_hyperparams}. A general trend, observed across all algorithms, was the apparent trade-off between optimality gap and constraint violation. A choice of hyperparameters that improved one worsened the other. We observed that APriD, GOCO, and PDSS diverged once the iterates left the feasible set. We also found that the constraint violation of SSP showed highly irregular behavior, often decreasing and increasing across iterations without a clear trend. These behaviors persisted across the range of hyperparameters we tested. Therefore, for the above-mentioned methods, we selected hyperparameters that provided a reasonable balance between reducing the optimality gap and controlling constraint violation. For SSQP and RECOO, the hyperparameters were tuned so that the constraint violation decreased to near zero within a few iterations. In contrast, for VARAS, the hyperparameters were selected to ensure exact feasibility, i.e., almost zero constraint violation. The value of $F(\x_\star)$ was computed after running $10^6$ iterations of SSQP, and we compare all the algorithms in terms of the relative optimality gap, i.e. $\frac{F(\x)-F(\x_\star)}{F(\x_\star)}$. 
	
	\subsubsection{Optimality gap and constraint violation} Fig.~\ref{fig:comparison_usv} shows the relative optimality gap and constraint violation of different algorithms as a function of the number of SFO calls for two different experimental settings. We observe that, excluding RECOO, SSQP demonstrates superior performance compared to the other baseline algorithms; moreover, the performance gap between SSQP and RECOO remains small. The slightly better performance of RECOO is not unexpected, as it is a proximal-based method that solves a more general convex optimization subproblem at each iteration, whereas SSQP solves a quadratic program based on local first-order approximations. Overall VARAS outperforms all other algorithms by a substantial margin in both cases. Fig.~\ref{fig:optimal_path} shows the least-energy trajectories obtained by VARAS. The optimal trajectories achieve energy reduction of approximately $86\%$ for setting (a) and $30\%$ for setting (b) relative to the straight-line trajectories.
	
	\subsubsection{Per-iteration runtime} 
	
	Next, we examine the per-iteration runtime of some of the representative algorithms. Since SSQP requires solving a QP subproblem whose worst-case computational complexity is $\mathcal{O}(m^3)$, where $m$ denotes the number of constraints, we investigate the effect of increasing the number of constraints on the per-iteration cost by varying the number of waypoints $T$ from $40$ to $120$, while keeping $n$ fixed at $1000$. Fig.~\ref{fig:scaling_with_constraints} compares the average per-iteration cost of SSQP against the primal-dual methods GOCO and APriD, as well as RECOO, a general proximal optimization method. We note that SSP is not included in this comparison since it randomly selects one constraint at each iteration, and therefore its per-iteration cost does not scale with the total number of constraints. As expected, SSQP is faster than RECOO, which solves a general convex problem at each iteration, but slower than APriD and GOCO, whose constraint-related computations require only $\cO(m)$ operations per-iteration. Interestingly, while SSQP is only 2--3 times slower than APriD and GOCO, it is almost 100 times faster than RECOO, supporting our earlier observation that solving the QP subproblem can be substantially simpler than solving a general convex subproblem in this setting.
	
	\subsubsection{Overall runtime}
	The preceding sections show that SSQP has a higher per-iteration cost than primal-dual methods, but also requires fewer iterations than others to converge. We therefore next compare the algorithms in terms of wall-clock time, which captures both iteration complexity and per-iteration cost. Specifically, we compare four representative classes of methods: the proposed SSQP algorithm, primal-dual methods (GOCO and APriD), the stochastic subgradient projection method SSP, and the proximal method RECOO. We use setting (b), corresponding to $n=1000$ and $T=100$. Fig.~\ref{fig:conv_with_time} shows that SSQP achieves the best overall wall-clock performance among the tested methods, indicating that its faster convergence more than compensates for the additional cost of solving QP subproblems in this regime.

	\begin{figure}[ht]
		\centering
		\subfloat[]{
			\includegraphics[width=0.23\textwidth]{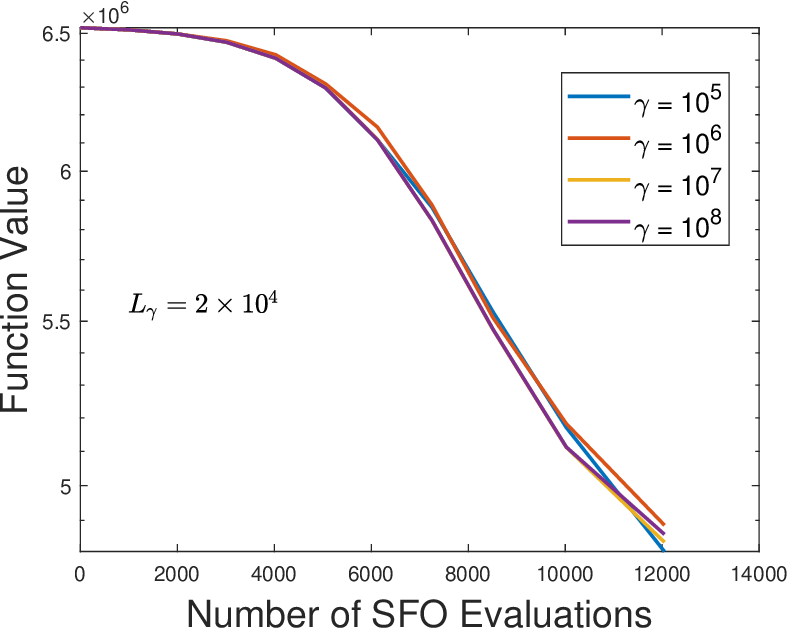}
			
		}
		\hfill
		\subfloat[]{
			\includegraphics[width=0.23\textwidth]{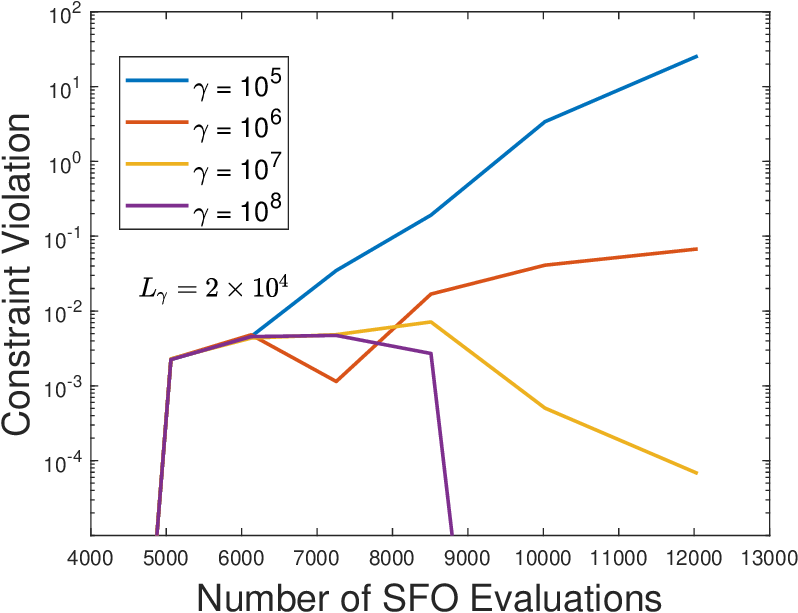}
		}
		\vspace{0.05cm}
		\subfloat[]{
			\includegraphics[width=0.23\textwidth]{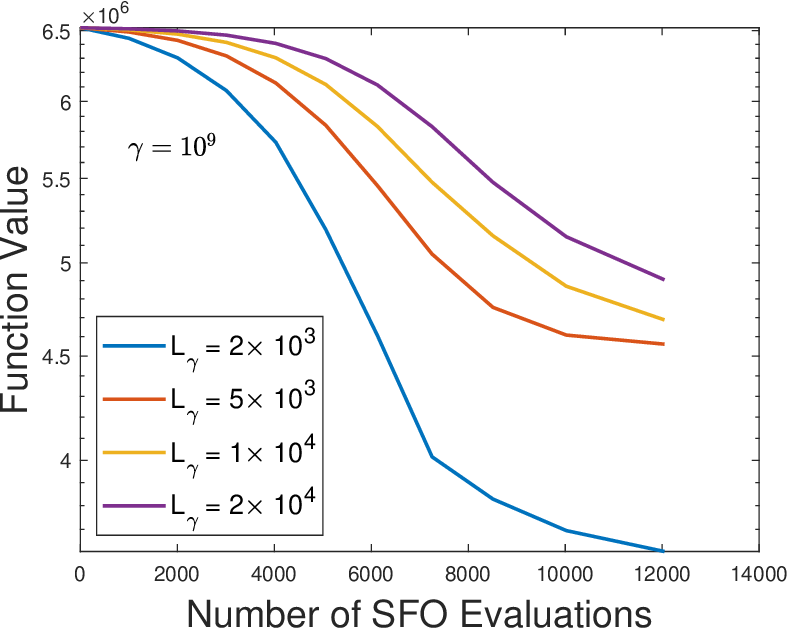}
			
		}
		\hfill
		\subfloat[]{
			\includegraphics[width=0.23\textwidth]{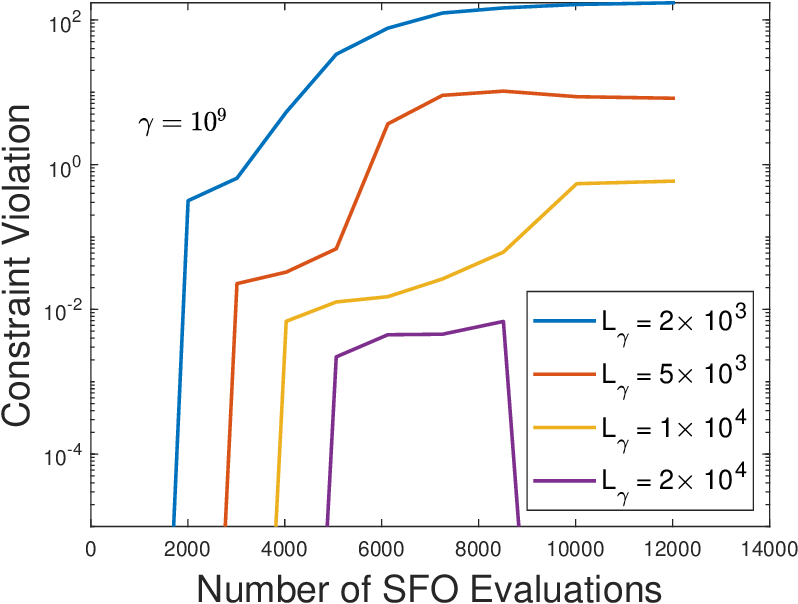}
		}
		\caption{ Sensitivity of VARAS to hyperparameters. The top row shows the 
			effect of $\gamma$ on VARAS with $L_\gamma=2\times 10^4$. Larger values of $\gamma$ improve feasibility. The bottom row illustrates the effect of $L_\gamma$ on VARAS for $\gamma=10^9$. Larger values of $L_\gamma$ result in slower convergence but yield better feasibility performance.}
		\label{fig:varas_sensitivity}
	\end{figure}
	
	\subsubsection{VARAS sensitivity to hyperparameters}
	Since the theoretically sufficient values of $\gamma$ and $L_\gamma$ may be conservative, we include a sensitivity study for these parameters. The penalty parameter $\gamma$ controls the extent to which constraint violations are penalized during the intermediate iterations. Theoretically, $\gamma$ is required to be sufficiently large, i.e., $\gamma > \tilde{B}/\nu$. In practice, across both experiments and for all three variants, namely SSQP, SSQP-Skip, and VARAS, we observed that once $\gamma$ exceeded a problem-dependent threshold, increasing it further did not deteriorate the performance. The top row of Fig.~\ref{fig:varas_sensitivity} illustrates the effect of $\gamma$ on VARAS for a fixed value of $L_\gamma$. As can be seen, the objective value evolves almost identically for different choices of $\gamma$. On the other hand, the constraint violation is large only when $\gamma$ is too small, and remains close to zero otherwise.
	
	The parameter $L_\gamma$ can in principle be approximated as in the original VARAG algorithm \cite{lan2019unified}, for instance using curvature information of the objective function. However, we found it simpler to tune $L_\gamma$ directly, similar to how the learning rate is routinely tuned in stochastic gradient methods. Empirically, $L_\gamma$ is observed to play a role analogous to an inverse learning-rate parameter. This behavior is evident from the bottom row of Fig.~\ref{fig:varas_sensitivity}, where the objective value decreases more slowly as $L_\gamma$ increases. At the same time, choosing $L_\gamma$ too small leads to large constraint violations, whereas larger values yield near-zero constraint violations.
	
	\subsection{Regression with Residual Constraints}
	Regression is a fundamental tool in signal processing and learning. However, in many practical applications, e.g. in wireless communications \cite{song2019set}, in addition to simply fitting a regression model to the observed data, it is desirable to ensure that, for some critical samples, the loss remains below a prescribed tolerance. We can write the constrained regression problem as
	\begin{align}
		\label{eq:regression_opti_prob}
		&\min_{\th \in \Rn^{d}} \frac{1}{2n} \sum_{i=1}^{n}\ell(y_i, b_\th(\x_i)) \\
		&\text{s. t. } \ell(y_k, b_\th(\x_k)) \leq r,  ~~k \in \bc{1,2,\ldots,K},\nonumber
	\end{align}
	where $\ell$ denotes the loss function,  $b_\th$ denotes the regression model, and $\bc{(\x_i, y_i)}_{i=1}^n$ are the data points whose first $K$ tuples belong to the critical set over which the loss should be below $r$. While \eqref{eq:regression_opti_prob} is a special case of \eqref{mainProb} whenever the objective and the constraints are convex, we consider linear regression for simplicity, i.e., $\ell(y_i, b_\th(\x_i)) = (y_i-\x_i^\T\th)^2$. We remark that the adaptive version of \eqref{eq:regression_opti_prob} has been widely studied within the framework of set-membership adaptive filtering \cite{bhotto2011robust, flores2019set}. 
	
	\begin{table}[ht]
		\centering
		\caption{Performance comparison for different threshold values (averaged over 50 runs).} 
		\begin{tabular}{|l| l| c| c| c|}
			\hline
			\multicolumn{2}{|c|}{ } & \multicolumn{3}{c|}{\textbf{Thresholds}} \\
			\cline{3-5}
			\multicolumn{2}{|c|}{ } & 0.005 & 0.001 & 0.0008 \\
			\hline
			\multirow[c]{3}{*}{\textbf{SSQP-Skip}} 
			& SFO    &110858   &416532   &473856   \\
			& QMO    &139   &216   &232   \\
			& Time (s)&6.61   &20.48   &23.07   \\
			\hline
			\multirow[c]{2}{*}{\textbf{GOCO}}
			& SFO    &786662   &1753799   &1956357   \\
			& Time (s)&36.48   &79.98   &88.93   \\
			\hline
			\multirow[c]{2}{*}{\textbf{APriD}}
			& SFO    &693345   &1277240   &1368064   \\
			& Time (s)&261.30   &492.56   &529.50   \\
			\hline
		\end{tabular}
		\label{tab:threshold_comparison}
	\end{table} 
	
	Here, we evaluate the performance of the proposed algorithms on the Year Prediction dataset~\cite{year_prediction_msd_203} which consists of $515345$ data points with $90$ features, resulting in the data matrix $\X$. We normalized the data and labels before conducting the experiment.
		We randomly selected $K=500$ data points to form the set of critical samples $\cD_c = \bc{\x_k, y_k}_{k=1}^{K}$. The remaining points constituted the dataset $\cD_f = \bc{\x_i, y_i}_{i=1}^{n}$. We empirically examined the optimization problem in (\ref{eq:regression_opti_prob}) and set $r = 0.49$, ensuring that the feasible set is non-empty.
	
	\begin{figure}[ht]
		\centering
		
		\subfloat[]{
			\includegraphics[width=0.23\textwidth]{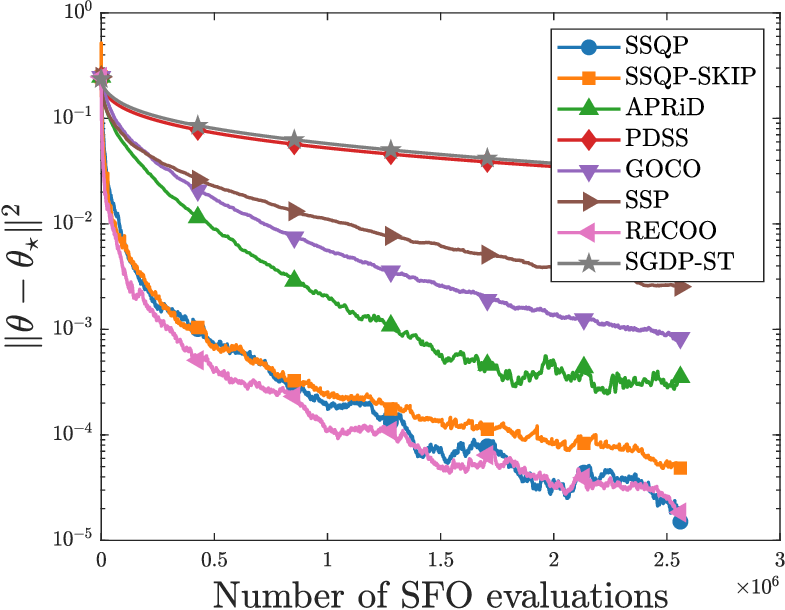}
			\label{fig:regres_fig1}
		}
		\hfill
		\subfloat[]{
			\includegraphics[width=0.23\textwidth]{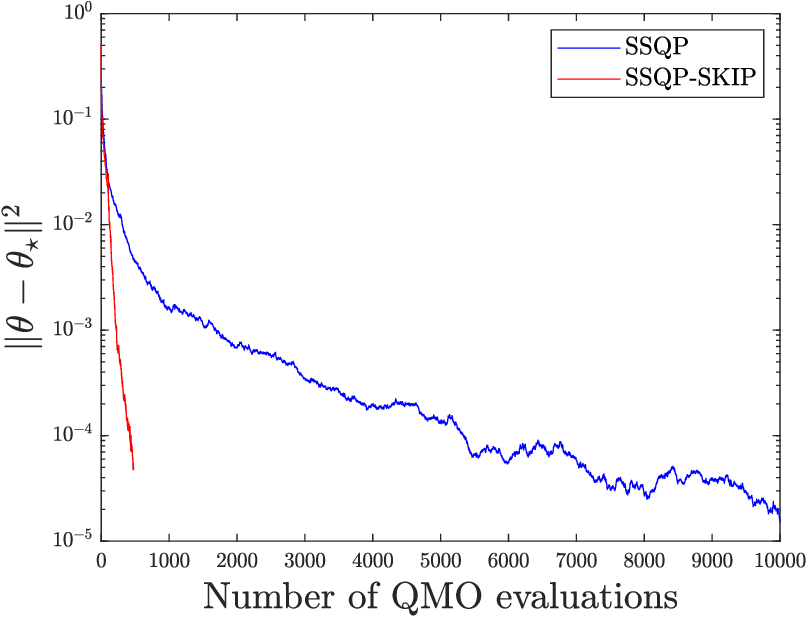}
			\label{fig:regres_fig2}
		}
		
		\caption{Squared distance to the optimum versus the SFO and QMO calls.}
		\label{fig:combined_regress}
	\end{figure}
	
	The proposed algorithms SSQP and SSQP-Skip are compared against state-of-the-art APriD, GOCO, PDSS, SSP, and RECOO algorithms, with all hyperparameters separately tuned to yield the best convergence rates. For SSQP, the configuration $L=1.1$, $\mu=0.3$, $\gamma = 10^5$ yielded the best performance, whereas for SSQP-Skip the optimal setting was $L=0.8$, $\mu=0.4$, $\gamma = 10^6$. We fixed the minibatch size to $256$ for all algorithms. We observed that in the initial $K_s$  iterations, not skipping the QMO step in SSQP-Skip empirically improved its performance, a strategy we call \emph{kickstart}. In our experiment, we set $K_s=100$ which is very small in comparison to the $10^4$ iterations required by all algorithms. For GOCO, empirically it was observed that clipping the subgradient lead to better convergence. The optimal point $\x_\star$ was obtained by running SSQP for a very large number of iterations. Fig. \ref{fig:combined_regress} shows the convergence rate in terms of the distance to the optimal point versus the number of SFO and QMO evaluations.

	Since APriD, GOCO, PDSS, and SSP do not solve quadratic programs, it is natural to quantify the advantage of SSQP-Skip in terms of wall-clock time. Table~\ref{tab:threshold_comparison} reports the SFO and QMO complexities of SSQP-Skip, APriD, and GOCO, averaged over 50 runs, required to ensure that the squared distance from the optimum is at most $\epsilon \in \{0.005, 0.001, 0.0008\}$. We chose APriD and GOCO for wall-clock time comparison as these algorithms showed better empirical performances than PDSS and SSP (refer Fig.~\ref{fig:combined_regress}).We note that APriD also requires a matrix-vector multiplication at every iteration to compute the Lagrangian derivative, resulting in higher wall-clock time despite its superior SFO complexity, observed empirically, compared to GOCO. As evident from the results, SSQP-Skip achieves better performance in terms of both SFO complexity and wall-clock time.
	
	\begin{figure}[ht]
		\centering
		\includegraphics[width=0.4\textwidth]{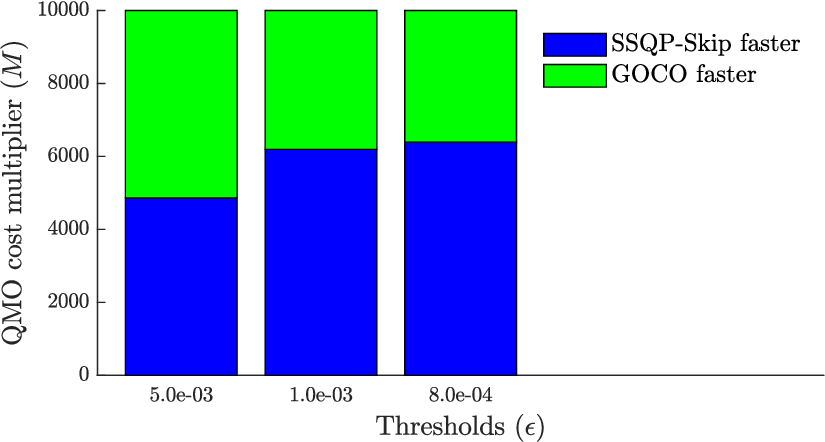}
		\caption{SSQP-Skip is faster when $M$ is small (blue region) and slower when $M$ is large (green region). }
		\label{fig:runtime}
	\end{figure}
	
	Raw CPU times on modern hardware may vary widely due to differing implementations of quadratic optimization across commercial solvers and open-source libraries. To abstract away these implementation effects, we assume an SFO that returns stochastic gradient information in $\tau$ seconds and a QP solver that completes each intermediate quadratic program in $M\tau$ seconds, where $M \geq 1$ captures the additional cost of solving a QP relative to an SFO-only setting. With this, the wall-clock time of SSQP-like algorithms is $\cO\br{\text{SFO}+M \times \text{QMO}}$, where SFO and QMO denote the number of SFO and QMO calls, respectively. Intuitively, if $M$ is very large, even a few QMO calls will hurt the wall-clock time and SSQP-Skip will take longer than other algorithms. Figure~\ref{fig:runtime} illustrates the critical values of $M$ below which SSQP-Skip remains faster than GOCO, based on the complexities summarized in Table~\ref{tab:threshold_comparison}. We remark that $M$ also depends on minibatch sizes.  

	\section{Conclusion}\label{sec-conclusion}
	This paper proposes the stochastic sequential quadratic programming (SSQP) framework, where each iteration requires solving a quadratic program (QP) with linearized constraints. For the convex and strongly convex cases, SSQP achieves rates on par with those of unconstrained stochastic gradient descent. Additionally, we propose the SSQP-Skip algorithm which requires solving QPs only on a small subset of iterations, resulting in reduced wall-clock times. For the finite-sum case, we propose the accelerated variance-reduced VARAS algorithm that also achieves near-optimal iteration complexity, improving upon existing results for constrained problems. The performance of the proposed algorithms, tested on trajectory generation and constrained regression problems, is also significantly better than the related primal--dual and other approaches in the literature. 
	
	\appendices
	\section{Basic inequalities}\label{basic}
	This section details some basic inequalities that will be repeatedly used in the proofs. Since the max function is monotonic, we have:
	\begin{align}
		\max\{[u_k+a]_+\} - \max\{[u_k]_+\} &\leq a, \label{subhomo}
	\end{align}
	for any $a \geq 0$. Further, $\max\{[au_k]_+\} = a\max\{[u_k]_+\}$ for any $a \geq 0$. Similarly, it can also be shown that
	\begin{align}
		\max\{[u_k+y_k]_+\} &\leq \max\{[u_k]_+\} + \max\{[y_k]_+\} \label{triangle2}.
	\end{align}
	
	For a $\mu$-strongly convex function $\varphi(\x)$, we have the quadratic lower bound 
	\begin{align}
		\varphi(\y) &\geq \varphi(\x) + \ip{\nabla \varphi (\x)}{\y - \x} + \frac{\mu}{2}\norm{\x-\y}^2, \label{qlb}
	\end{align}
	for all $\x,\y \in \text{dom } \varphi$. Hence, for $\x_\star = \arg\min_{\x} \varphi(\x)$, we have that
	which implies that 
	\begin{align}
		\varphi(\x_\star) + \frac{\mu}{2}\norm{\x-\x_\star}^2 \leq \varphi(\x), \label{sclb}
	\end{align}
	for all $\x \in \text{dom } \varphi$. If $\varphi$ is also $L$-smooth, we have the quadratic upper bound as well as the co-coercivity property:
	\begin{align}
		\varphi(\y) &\leq \varphi(\x) + \ip{\nabla \varphi (\x)}{\y - \x} + \frac{L}{2}\norm{\x-\y}^2 \label{qub}\\
		\varphi(\y) &\geq \varphi(\x) + \ip{\nabla \varphi (\x)}{\y - \x} + \tfrac{1}{2L}\norm{\nabla \varphi(\x)-\nabla \varphi(\y)}^2. \label{coco}
	\end{align}


We also list some of the common norm inequalities, which follow from the Cauchy-Schwarz inequality, Young's inequality, and the triangle inequality:
\begin{align}
	\ip{\u}{\v} &\leq \norm{\u}\norm{\v} \leq \tfrac{\varepsilon}{2}\norm{\u}^2+\tfrac{1}{2\varepsilon}\norm{\v}^2, \label{young}\\
	\norm{\u+\v} &\leq \norm{\u} + \norm{\v}, 	\label{triangle}
\end{align}
for $\varepsilon > 0$. Combining the two for $\varepsilon = 1$, we obtain
\begin{align}
	\norm{\u + \v}^2 \leq 2\norm{\u}^2 + 2\norm{\v}^2. \label{peterpaul}
\end{align} 

\section{Proof of Theorem \ref{prox-thm}}\label{pfprox-thm}
We begin by establishing a key lemma using the update equation, convexity, and the smoothness of the constraint functions $g_k$. 

\begin{lemma}\label{3plemma}
	Under Assumption \ref{a1}, the update \eqref{proxlin-x} implies that
	\begin{align}
		&\ip{\nabla f_{i_t}(\x_t)}{\x_{t+1} - \x_\star} + h(\x_{t+1}) + \gamma \max\{[g_k(\x_{t+1})]_+\} \nonumber\\
		& \leq h(\x_\star) + \tfrac{1}{2\eta_t}\norm{\x_t - \x_\star}^2  - \tfrac{1}{2\eta_t}\norm{\x_{t+1}-\x_\star}^2 \nonumber\\
		&\hspace{2cm}-\left(\tfrac{1}{2\eta_t}-\tfrac{\gamma L_g}{2}\right)\norm{\x_{t+1} - \x_t}^2. \label{3plemma-eq}
	\end{align}
\end{lemma}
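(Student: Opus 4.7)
The plan is to use the strong convexity of the subproblem whose minimizer is $\x_{t+1}$, comparing the subproblem value at the minimizer $\x_{t+1}$ with that at $\x_\star$, and then separately massaging the linearized $\max\{[\cdot]_+\}$ term on each side. Let $\Phi(\u)$ denote the objective on the right-hand side of \eqref{proxlin-x}. Since $h$, the affine forms $\u \mapsto g_k(\x_t) + \ip{\nabla g_k(\x_t)}{\u-\x_t}$, and the composition with the monotone convex map $\max\{[\cdot]_+\}$ are all convex, $\Phi$ is the sum of a convex function and the $\frac{1}{\eta_t}$-strongly convex quadratic $\frac{1}{2\eta_t}\norm{\x_t - \u}^2$, hence is $\frac{1}{\eta_t}$-strongly convex. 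The quadratic lower bound at the minimizer \eqref{sclb} then yields $\Phi(\x_\star) \geq \Phi(\x_{t+1}) + \frac{1}{2\eta_t}\norm{\x_{t+1}-\x_\star}^2$.

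Next I would bound the linearized $\max\{[\cdot]_+\}$ term on each side. On the $\x_\star$ side, convexity of each $g_k$ together with feasibility $g_k(\x_\star)\leq 0$ gives $g_k(\x_t) + \ip{\nabla g_k(\x_t)}{\x_\star-\x_t} \leq g_k(\x_\star) \leq 0$ for every $k$, so the entire linearized constraint term at $\x_\star$ is zero. On the $\x_{t+1}$ side, $L_g$-smoothness of $g_k$ applied through the quadratic upper bound \eqref{qub} gives
\begin{align*}
g_k(\x_t) + \ip{\nabla g_k(\x_t)}{\x_{t+1}-\x_t} \geq g_k(\x_{t+1}) - \tfrac{L_g}{2}\norm{\x_{t+1}-\x_t}^2
\end{align*}
for every $k$. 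Since the quantity $c:=\frac{L_g}{2}\norm{\x_{t+1}-\x_t}^2$ is a common nonnegative constant, combining this with the 1-Lipschitzness of $[\cdot]_+$ and the monotonicity property \eqref{subhomo} yields $\max\{[g_k(\x_t) + \ip{\nabla g_k(\x_t)}{\x_{t+1}-\x_t}]_+\} \geq \max\{[g_k(\x_{t+1})]_+\} - c$.

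Finally, I would substitute both bounds into the strong-convexity inequality $\Phi(\x_\star) \geq \Phi(\x_{t+1}) + \frac{1}{2\eta_t}\norm{\x_{t+1}-\x_\star}^2$, cancel the $\ip{\nabla f_{i_t}(\x_t)}{\x_{t+1}}$ and $\ip{\nabla f_{i_t}(\x_t)}{\x_\star}$ contributions into the single inner product $\ip{\nabla f_{i_t}(\x_t)}{\x_{t+1}-\x_\star}$, and merge $\frac{1}{2\eta_t}\norm{\x_t - \x_{t+1}}^2$ on the $\x_{t+1}$ side with the $-\frac{\gamma L_g}{2}\norm{\x_{t+1}-\x_t}^2$ term from the linearization bound to obtain the coefficient $\frac{1}{2\eta_t}-\frac{\gamma L_g}{2}$ in \eqref{3plemma-eq}. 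Rearranging completes the proof.

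The main obstacle is the non-smooth composite constraint term: strong convexity of $\Phi$ is straightforward, but one must be careful in lower bounding $\max\{[g_k(\x_t)+\ip{\nabla g_k(\x_t)}{\x_{t+1}-\x_t}]_+\}$ by $\max\{[g_k(\x_{t+1})]_+\}$ up to a quadratic error. This step crucially relies on the fact that the smoothness slack $\frac{L_g}{2}\norm{\x_{t+1}-\x_t}^2$ is identical across all $k$, so it can be pulled outside the $\max\{[\cdot]_+\}$ via \eqref{subhomo} as a single additive term; otherwise a $k$-dependent error would break the argument.
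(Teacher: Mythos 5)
Your proposal is correct and follows essentially the same route as the paper's proof: the $\tfrac{1}{\eta_t}$-strong convexity of the subproblem objective evaluated at $\x_\star$ versus the minimizer $\x_{t+1}$, the convexity-plus-feasibility argument that zeroes out the linearized constraint term at $\x_\star$, and the $L_g$-smoothness bound pulled through $\max\{[\cdot]_+\}$ via \eqref{subhomo} to recover $\max\{[g_k(\x_{t+1})]_+\}$ up to the common quadratic slack. Your closing remark about the slack being $k$-independent is exactly the point that makes the paper's use of \eqref{subhomo}--\eqref{triangle2} in \eqref{3p-2} go through.
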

\begin{IEEEproof}
	Since $\x_{t+1}$ is obtained by minimizing a $\frac{1}{\eta_t}$-strongly convex function in \eqref{proxlin-x}, we have from \eqref{sclb} that
	\begin{align}
		&\ip{\nabla f_{i_t}(\x_t)}{\x_{t+1} - \x_\star} + h(\x_{t+1}) + \tfrac{1}{2\eta_t}\norm{\x_{t+1} - \x_t}^2  \nonumber\\
		&\hspace{1cm}+ \gamma \max\{[g_k(\x_t) + \ip{\nabla g_k(\x_t)}{\x_{t+1} - \x_t}]_{+}\} \nonumber\\
		& \leq h(\x_\star) + \tfrac{1}{2\eta_t}\norm{\x_t - \x_\star}^2   - \tfrac{1}{2\eta_t}\norm{\x_{t+1}-\x_\star}^2 \nonumber\\
		&\hspace{1cm}+ \gamma \max\{[g_k(\x_t) + \ip{\nabla g_k(\x_t)}{\x_\star - \x_t}]_{+}\} \label{3p-1}\\
		&\leq h(\x_\star) + \tfrac{1}{2\eta_t}\norm{\x_t - \x_\star}^2   - \tfrac{1}{2\eta_t}\norm{\x_{t+1}-\x_\star}^2, \label{3p-1b}
	\end{align}
	where \eqref{3p-1b} follows from the convexity of $g_k$ and the feasibility of $\x_\star$, so that
	\begin{align}
		g_k(\x_t) + \ip{\nabla g_k(\x_t)}{\x_\star-\x_t} &\leq g_k(\x_\star) \leq 0 , \label{gconv} 
	\end{align}
	and hence, $\max\{[g_k(\x_t) + \ip{\nabla g_k(\x_t)}{\x_\star - \x_t}]_{+}\} = 0$. Likewise, since $g_k$ is $L_g$-smooth, we have from the \eqref{qub} and \eqref{subhomo}-\eqref{triangle2} that
	\begin{align}
		\max\{[g_k(\x_{t+1})]_+\} &\leq \max\{[g_k(\x_t) + \ip{\nabla g_k(\x_t)}{\x_{t+1}-\x_t}]_+\} \nonumber\\
		&+ \tfrac{L_g}{2}\norm{\x_t - \x_{t+1}}^2 . \label{3p-2}
	\end{align}
	Multiplying \eqref{3p-2} by $\gamma$ and substituting into \eqref{3p-1b}, we obtain the required result. 
\end{IEEEproof}

We are now ready to establish the one-step inequality for the SSQP algorithm. 

\begin{lemma}\label{step}
	Under Assumptions \eqref{a1} and \eqref{gradnoi}, we have for $\eta_t \leq \frac{1}{2(L_f+\max\{\gamma L_g,L_f\})}$, where $F(\x) = f(\x)+h(\x)+\gamma \max\{[g_k(\x)]_+\}$:
	\begin{align}
		\Et{F(\x_{t+1})}-F(\x_\star) &\leq \tfrac{1-\mu\eta_t(1-4\eta_t L_f)}{2\eta_t}\norm{\x_t-\x_\star}^2 \nonumber\\
		& - \tfrac{1}{2\eta_t}\Et{\norm{\x_{t+1}-\x_\star}^2} + 2\eta_t\sigma^2 .\label{step-eq}
	\end{align}
\end{lemma}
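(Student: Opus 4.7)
The plan is to convert Lemma \ref{3plemma} into a descent-style inequality for the augmented objective $F$, by first re-expressing the inner product against the stochastic gradient in terms of function values and then controlling the resulting noise cross term via (\ref{young}) together with the variance bound (\ref{gradvar1}). I would start by taking $\Et{\cdot}$ on both sides of (\ref{3plemma-eq}) and splitting $\Et{\ip{\nabla f_{i_t}(\x_t)}{\x_{t+1}-\x_\star}}$ as $\Et{\ip{\nabla f_{i_t}(\x_t)}{\x_{t+1}-\x_t}}+\ip{\nabla f(\x_t)}{\x_t-\x_\star}$, using unbiasedness and the measurability of $\x_t$ with respect to the history up to time $t$.

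For the $\x_t\to\x_\star$ piece, $\mu$-strong convexity of $f$ gives $\ip{\nabla f(\x_t)}{\x_t-\x_\star}\geq f(\x_t)-f(\x_\star)+\tfrac{\mu}{2}\norm{\x_t-\x_\star}^2$. For the $\x_t\to\x_{t+1}$ piece, I would add and subtract $\nabla f(\x_t)$ and apply the $L_f$-smoothness of $f$ through the quadratic upper bound (\ref{qub}), obtaining $\Et{\ip{\nabla f(\x_t)}{\x_{t+1}-\x_t}}\geq \Et{f(\x_{t+1})}-f(\x_t)-\tfrac{L_f}{2}\Et{\norm{\x_{t+1}-\x_t}^2}$, with a surviving cross term $\Et{\ip{\nabla f_{i_t}(\x_t)-\nabla f(\x_t)}{\x_{t+1}-\x_t}}$. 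Substituting these back into Lemma \ref{3plemma} and gathering the $f$, $h$, and penalty contributions at $\x_{t+1}$ into $F(\x_{t+1})$ (and at $\x_\star$ into $F(\x_\star)$, using feasibility of $\x_\star$ so that the constraint penalty vanishes) yields an inequality of the advertised form, carrying a residual noise cross term and a net coefficient $-\bigl(\tfrac{1}{2\eta_t}-\tfrac{\gamma L_g+L_f}{2}\bigr)$ multiplying $\Et{\norm{\x_{t+1}-\x_t}^2}$.

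The crucial step is controlling the residual noise cross term without inflating the $\sigma^2$ floor beyond $2\eta_t\sigma^2$. I would apply Young's inequality (\ref{young}) with parameter $\tfrac{1}{2\eta_t}$ to split the cross term into $\eta_t\Et{\norm{\nabla f_{i_t}(\x_t)-\nabla f(\x_t)}^2}+\tfrac{1}{4\eta_t}\Et{\norm{\x_{t+1}-\x_t}^2}$, invoke (\ref{gradvar1}) to upper bound the variance by $4L_fD_f(\x_\star,\x_t)+2\sigma^2$, and finally pass from the Bregman divergence to $\norm{\x_t-\x_\star}^2$ via the smoothness-based quadratic upper bound on $D_f(\x_\star,\x_t)$.

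The main obstacle will be the constant bookkeeping at the end. The stepsize condition $\eta_t\leq \tfrac{1}{2(L_f+L)}$ with $L=\max\{\gamma L_g,L_f\}$ has to be used twice: first to drive the net coefficient on $\Et{\norm{\x_{t+1}-\x_t}^2}$, namely $-\bigl(\tfrac{1}{2\eta_t}-\tfrac{\gamma L_g+L_f}{2}-\tfrac{1}{4\eta_t}\bigr)$, to be non-positive so that this term may be dropped; and second so that the surviving $\norm{\x_t-\x_\star}^2$ coefficient consolidates the $\tfrac{1}{2\eta_t}$ from Lemma \ref{3plemma}, the $-\tfrac{\mu}{2}$ from strong convexity, and the $4\eta_tL_f D_f(\x_\star,\x_t)$ contribution from the noise into exactly the advertised form $\tfrac{1-\mu\eta_t(1-4\eta_tL_f)}{2\eta_t}$. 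Once those constants are checked to balance, the inequality (\ref{step-eq}) follows directly.
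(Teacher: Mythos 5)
Your overall route is the paper's: add the $L_f$-smoothness bound on $f$ to Lemma \ref{3plemma}, kill the $\ip{\nabla f(\x_t)-\nabla f_{i_t}(\x_t)}{\x_t-\x_\star}$ term by unbiasedness, split the remaining noise cross term with Young's inequality at parameter $2\eta_t$, and invoke \eqref{gradvar1}; your bookkeeping for the $\norm{\x_{t+1}-\x_t}^2$ coefficient is also correct. However, there is one genuine gap in how you dispose of the Bregman term, and it changes the final constant. By applying strong convexity \emph{early} to write $\ip{\nabla f(\x_t)}{\x_t-\x_\star}\geq f(\x_t)-f(\x_\star)+\tfrac{\mu}{2}\norm{\x_t-\x_\star}^2$, you discard the exact identity $\ip{\nabla f(\x_t)}{\x_t-\x_\star}=f(\x_t)-f(\x_\star)+D_f(\x_\star,\x_t)$ and thereby lose the $-D_f(\x_\star,\x_t)$ that is needed to absorb the $+4\eta_tL_fD_f(\x_\star,\x_t)$ produced by \eqref{gradvar1}. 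You are then forced to bound the orphaned $4\eta_tL_fD_f(\x_\star,\x_t)$ from above via smoothness, $D_f(\x_\star,\x_t)\leq\tfrac{L_f}{2}\norm{\x_t-\x_\star}^2$, which contributes $+2\eta_tL_f^2\norm{\x_t-\x_\star}^2$ and yields the coefficient $\tfrac{1-\mu\eta_t+4\eta_t^2L_f^2}{2\eta_t}$ rather than the claimed $\tfrac{1-\mu\eta_t(1-4\eta_tL_f)}{2\eta_t}=\tfrac{1-\mu\eta_t+4\mu\eta_t^2L_f}{2\eta_t}$. These do not "balance into exactly the advertised form" unless $L_f=\mu$; since in general $L_f\geq\mu$, your inequality is strictly weaker.

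This is not a cosmetic loss. In the convex case $\mu=0$, the lemma's coefficient is exactly $\tfrac{1}{2\eta_t}$, which is what makes the telescoping $\sum_t\eta_t\Delta_t\leq\tfrac{\delta_0}{2}+2\sigma^2\sum_t\eta_t^2$ close in the proof of Theorem \ref{prox-thm}; your version leaves an uncontrolled residual $2\eta_tL_f^2\norm{\x_t-\x_\star}^2$ on the right that does not telescope away. The fix is to keep the Bregman divergence intact until the end: after combining, the right-hand side carries $-(1-4\eta_tL_f)D_f(\x_\star,\x_t)$, and only then, using $\eta_t\leq\tfrac{1}{4L_f}$ (implied by the stated stepsize condition) so that the prefactor is nonnegative, do you apply the strong-convexity \emph{lower} bound $D_f(\x_\star,\x_t)\geq\tfrac{\mu}{2}\norm{\x_t-\x_\star}^2$ from \eqref{qlb} to obtain $-(1-4\eta_tL_f)\tfrac{\mu}{2}\norm{\x_t-\x_\star}^2$, which gives the advertised coefficient and degenerates gracefully to $\tfrac{1}{2\eta_t}$ when $\mu=0$.
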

\begin{IEEEproof}
	Since $f$ is $L_f$-smooth, we have from \eqref{qub} that 
	\begin{align}
		f(\x_{t+1})&-f(\x_\star)\leq f(\x_t)  + \ip{\nabla f(\x_t)}{\x_{t+1}-\x_t} \nonumber\\
		&\hspace{1.4cm}+ \tfrac{L_f}{2}\norm{\x_{t+1}-\x_t}^2 - f(\x_\star) \label{fsmooth}\\
		&= -(f(\x_\star) - f(\x_t) - \ip{\nabla f(\x_t)}{\x_\star - \x_t}) \nonumber\\
		&+ \ip{\nabla f(\x_t)}{\x_{t+1}-\x_\star} + \tfrac{L_f}{2}\norm{\x_{t+1}-\x_t}^2.
	\end{align}
	Adding \eqref{3plemma-eq} and rearranging, we obtain
	\begin{align}
		F(\x_{t+1})&-F(\x_\star) \leq \tfrac{1}{2\eta_t}\norm{\x_t-\x_\star}^2 - \tfrac{1}{2\eta_t}\norm{\x_{t+1}-\x_\star}^2 \nonumber\\
		&- \tfrac{1-\eta_t(L_f+\gamma L_g)}{2\eta_t}\norm{\x_{t+1}-\x_t}^2 \nonumber\\
		&+ \ip{\nabla f(\x_t) - \nabla f_{i_t}(\x_t)}{\x_t-\x_\star} \nonumber\\
		&+ \ip{\nabla f(\x_t) - \nabla f_{i_t}(\x_t)}{\x_{t+1}-\x_t} \nonumber\\
		& -(f(\x_\star) - f(\x_t) - \ip{\nabla f(\x_t)}{\x_\star - \x_t}). \label{e0}
	\end{align}
	
	Since $i_t$ is selected at random, we have that
	\begin{align}
		\Et{\ip{\nabla f(\x_t) - \nabla f_{i_t}(\x_t)}{\x_t-\x_\star}} = 0 .\label{e1}
	\end{align}
	For the other term depending on $i_t$, we use \eqref{young} and \eqref{gradvar1} to obtain the bound
	\begin{align}
		&\Et{\ip{\nabla f(\x_t) - \nabla f_{i_t}(\x_t)}{\x_{t+1}-\x_t}} \nonumber\\
		&\lfrom{young} \eta_t\Et{\norm{\nabla f(\x_t) - \nabla f_{i_t}(\x_t)}^2} + \tfrac{1}{4\eta_t}\Et{\norm{\x_{t+1}-\x_t}^2} \nonumber\\
		&\lfrom{gradvar1} 4\eta_tL_f(f(\x_\star) - f(\x_t) - \ip{\nabla f(\x_t)}{\x_\star - \x_t}) + 2\eta_t\sigma^2 \nonumber\\
		&\hspace{2cm}+ \tfrac{1}{4\eta_t}\Et{\norm{\x_{t+1}-\x_t}^2}\label{e2}.
	\end{align}
	Therefore, taking expectation with respect to $i_t$ in \eqref{e0} and substituting \eqref{e1}-\eqref{e2}, we obtain
	\begin{align}
		&\Et{F(\x_{t+1})}-F(\x_\star) \leq \tfrac{1}{2\eta_t}\norm{\x_t-\x_\star}^2 + 2\eta_t\sigma^2 \nonumber\\
		&- \tfrac{1}{2\eta_t}\Et{\norm{\x_{t+1}-\x_\star}^2} - \tfrac{1-2\eta_t(L_f+\gamma L_g)}{4\eta_t}\Et{\norm{\x_{t+1}-\x_t}^2} \nonumber\\
		&-(1-4\eta_t L_f)(f(\x_\star) - f(\x_t) - \ip{\nabla f(\x_t)}{\x_\star - \x_t}). \label{prefinallem1}
	\end{align}
	The fourth term on the right is non-positive and can be dropped for $\eta_t \leq \frac{1}{2(L_f+\gamma L_g)}$. Finally, for $\eta_t \leq \frac{1}{4L_f}$, the last term on the right can be bounded from \eqref{qlb}, 
	\begin{align}
		-&(1-4\eta_t L_f)(f(\x_\star) - f(\x_t) - \ip{\nabla f(\x_t)}{\x_\star - \x_t}) \nonumber\\
		&\hspace{2cm}\leq -\tfrac{\mu(1-4\eta_t L_f)}{2}\norm{\x_t-\x_\star}^2,
	\end{align}
	which upon substituting into \eqref{prefinallem1}, yields the required result. Further, both requirements for $\eta_t$ are satisfied when $\eta_t \leq \frac{1}{2(L_f+\max\{L_f,\gamma L_g\})}$.
\end{IEEEproof}
Having established the one-step inequality, we can now obtain the required oracle complexities. 
\begin{IEEEproof}[Proof of Theorem \ref{prox-thm}]
	For the sake of brevity, recall the definition of $\Delta_t$ from Sec. \ref{penaltyequivalent} and also define $\delta_t := \EE\norm{\x_t - \x_\star}^2$. Taking full expectation in \eqref{step-eq} and using the definitions of $\Delta_t$ and $\delta_t$, we obtain:
	\begin{align}
		\Delta_{t+1} \leq \tfrac{1-\mu\eta_t(1-4\eta_t L_f)}{2\eta_t}\delta_t - \tfrac{1}{2\eta_t}\delta_{t+1} + 2\eta_t\sigma^2. 
	\end{align}
	Let us consider the convex case first, where we set $\mu = 0$, yielding $\Delta_{t+1} \leq \tfrac{1}{2\eta_t}\delta_t - \tfrac{1}{2\eta_t}\delta_{t+1} + 2\eta_t\sigma^2$. Multiplying by $\eta_t$ on both sides for $\eta_{t+1} \leq \eta_t$ and taking sum for $t = 0, \ldots, T-1$, we obtain:
	\begin{align}
		\sum_{t=1}^T \eta_t\Delta_t &\leq \tfrac{\delta_0 - \delta_{T}}{2} + 2\sigma^2\sum_{t=0}^{T-1} \eta_t^2 \leq \tfrac{\delta_0}{2}
		+ 2\sigma^2\sum_{t=0}^{T-1} \eta_t^2.
	\end{align}
	Therefore, from the convexity of $F$, we have the following bound for the averaged iterate $\bar{\x}_T := \left(\sum_{t=1}^T \eta_t\x_t\right)/\left(\sum_{t=1}^T\eta_t\right)$:
	\begin{align*}
		\E{F(\bar{\x}_T)}-F(\x_\star) \leq \tfrac{\sum_{t=1}^T \eta_t\Delta_t}{\sum_{t=1}^T\eta_t} \leq \tfrac{\delta_0 +  4\sigma^2\sum_{t=0}^{T-1}\eta_t^2}{2\sum_{t=1}^T \eta_t},
	\end{align*}
	With the stepsize rule $\eta_t = \frac{\eta_0}{\sqrt{t+1}}$ where $\eta_0 \leq \frac{1}{4L}$, we obtain
	\begin{align}\label{rate_ineq_convex_fixT}
		\E{F(\bar{\x}_T)}-F(\x_\star) \leq \tfrac{\delta_0 + 4\sigma^2\eta_0^2(1+\log(T))}{2\eta_0\sqrt{T}}. 
	\end{align}
	Alternatively, setting $\eta_t = \eta_0/\sqrt{T}$ for $\eta_0 = \min\{\frac{\sqrt{\delta_0}}{2\sigma},\frac{1}{4L}\}$, the bound becomes
	\begin{align}\label{rate_ineq_convex}
		\E{F(\bar{\x}_T)}-F(\x_\star) &\leq \tfrac{2}{\sqrt{T}}\max\{2L\delta_0, \sigma\sqrt{\delta_0}\}.
	\end{align}
	Since $\max\{[g_k(\x_\star)]_+\} = 0$ and $\E{\max\{[g_k(\bar{\x}_T)]_+\}} \geq 0$, we can drop these terms  from the left of \eqref{rate_ineq_convex_fixT}-\eqref{rate_ineq_convex} to yield the desired bounds on the optimality gap. 
	
	To bound the constraint violation, let $w_T := \max_k\{g_k(\bar{\x}_T)\}$. Taking expectation in \eqref{optimalKKT} and rearranging, we obtain
	\begin{align}\label{con_ineq_1}
		f(\x_\star)+h(\x_\star)-\E{f(\bar{\x}_T)+h(\bar{\x}_T)} &\leq\norm{\lmda_\star}_1 \E{w_T}\nonumber\\
		&\leq \tfrac{\bt}{\nu} \E{w_T}. 
	\end{align}
	Adding \eqref{rate_ineq_convex} and \eqref{con_ineq_1} we obtain
	\begin{align}\label{con_ineq_2}
		\left(\gamma - \tfrac{\bt}{\nu}\right) \E{w_T} \leq \tfrac{2}{\sqrt{T}}\max\{2L\delta_0, \sigma\sqrt{\delta_0}\},
	\end{align}
	which yields the desired bound and hence the SFO/QMO complexity for the convex case. 	
	
	For the strongly convex case, we first use \eqref{qlb} to write 
	\begin{align}
		F(\x_{t+1})-F(\x_\star) \geq \tfrac{\mu}{2}\norm{\x_{t+1}-\x_\star}^2 ,
	\end{align}
	so as to obtain
	\begin{align}
		\delta_{t+1}\leq \tfrac{1-\mu\eta_t(1-4\eta_tL_f)}{1+\mu\eta_t} \delta_t + \tfrac{4\eta_t^2\sigma^2}{1+\mu\eta_t} .
	\end{align}
	Suppose we choose $\eta_t \leq \frac{1}{8L} \leq \frac{1}{8L_f}$ so that $1-4\eta_tL_f \geq 1/2$ and the recursion becomes
	\begin{align}
		\delta_{t+1}\leq \tfrac{1-\mu\eta_t/2}{1+\mu\eta_t} \delta_t + \tfrac{4\eta_t^2\sigma^2}{1+\mu\eta_t}.
	\end{align}
	Thus if we set $\eta_t = \frac{2}{\mu(t+\omega+1)}$ and choose $\omega = \lfloor \frac{16L}{\mu}\rfloor$, it would follow that $\eta_t \leq \eta_0 \leq \frac{1}{8L}$ and the recursion can be written as
	\begin{align}\label{seq1}
		\delta_{t+1} \leq \tfrac{t+\omega}{t+\omega+3}\delta_t + \tfrac{16\sigma^2}{\mu^2(t+\omega+3)(t+\omega+1)} .
	\end{align} 
	Multiplying both sides by $(t+\omega+1)(t+\omega+2)(t+\omega+3)$ and summing telescopically, we obtain
	\begin{align}
		(T+\omega+2)&(T+\omega+1)(T+\omega)\delta_T \\
		\leq \omega(\omega+1)&(\omega+2)\delta_0 + \tfrac{8\sigma^2}{\mu^2}(T+\omega+1)(T+\omega+2). \nonumber
	\end{align}
	Rearranging and bounding the terms on the right, we obtain
	\begin{align}
		\delta_T &\leq \left(\tfrac{\omega+2}{T+\omega+2}\right)^3\delta_0 + \tfrac{8\sigma^2}{\mu^2(T+\omega)} \\
		&\leq \tfrac{(16\kappa+2)^3}{T^3}\delta_0 + \tfrac{8\sigma^2}{\mu^2T},\label{sc-sgd}
	\end{align}
	where $\kappa = L/\mu$. The bound in \eqref{sc-sgd} translates to an SFO complexity of $\O{\frac{\sigma^2}{\mu^2\epsilon}+\frac{\kappa \delta_0^{1/3}}{\epsilon^{1/3}}}$.
\end{IEEEproof}

\section{Proof of Theorem \ref{ssqp-skip-thm}}\label{pf-ssqp-skip-thm}
Before establishing the main result, we derive some intermediate results through the following lemmas. The following result is a consequence of the update in \eqref{ssqp-skip1} and the convexity and smoothness of $g_k$.
\begin{lemma}\label{prelim-skip}
	Under Assumption \ref{a1} and for $\y_\star = \nabla f(\x_\star)$,  it holds that
	\begin{align}\label{3pssqp-skip}
		p_t\norm{\hx_{t+1}-\x_\star}^2 \leq &p_t\norm{\tx_{t+1}-\x_\star}^2 - 2\eta_t \ip{\hx_{t+1}-\x_\star}{\y_t-\y_\star}\nonumber\\
		& - \br{p_t - \gamma\eta_tL_g}\norm{\hx_{t+1}-\tx_{t+1}}^2. 
	\end{align}
\end{lemma}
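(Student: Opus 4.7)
My plan is to follow the template of Lemma \ref{3plemma} with the substitutions $\nabla f_{i_t}(\x_t) \mapsto \y_t$, $\x_t \mapsto \tx_{t+1}$, and the rescaled proximal weight $p_t/\eta_t$, and then to introduce $\y_\star$ through the KKT conditions at $\x_\star$. Since the objective $\Phi(\u)$ in \eqref{ssqp-skip1} is $(p_t/\eta_t)$-strongly convex with minimizer $\hx_{t+1}$, applying \eqref{sclb} with $\x_\star$, dropping the linearized max at $\x_\star$ (which vanishes by convexity of $g_k$ and feasibility of $\x_\star$, exactly as in \eqref{gconv}--\eqref{3p-1b}), and multiplying through by $2\eta_t$ yields
\begin{align*}
    p_t\norm{\hx_{t+1} - \x_\star}^2 \leq\ & p_t\norm{\tx_{t+1} - \x_\star}^2 - p_t\norm{\hx_{t+1} - \tx_{t+1}}^2 \\
    &-2\eta_t\ip{\y_t}{\hx_{t+1} - \x_\star} + 2\eta_t\br{h(\x_\star) - h(\hx_{t+1})} \\
    &-2\eta_t\gamma M(\hx_{t+1}),
\end{align*}
where $M(\u) := \max\{[g_k(\tx_{t+1}) + \ip{\nabla g_k(\tx_{t+1})}{\u - \tx_{t+1}}]_+\}$. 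The $L_g$-smoothness of each $g_k$ combined with \eqref{subhomo}--\eqref{triangle2}, as in \eqref{3p-2}, gives $M(\hx_{t+1}) \geq \max\{[g_k(\hx_{t+1})]_+\} - \tfrac{L_g}{2}\norm{\hx_{t+1} - \tx_{t+1}}^2$, which upgrades the $\norm{\hx_{t+1} - \tx_{t+1}}^2$ coefficient to the desired $-(p_t - \gamma\eta_t L_g)$ and replaces the linearized max by a clean $-2\eta_t\gamma \max\{[g_k(\hx_{t+1})]_+\}$ term.

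To introduce $\y_\star$, I would split $\ip{\y_t}{\hx_{t+1} - \x_\star} = \ip{\y_t - \y_\star}{\hx_{t+1} - \x_\star} + \ip{\y_\star}{\hx_{t+1} - \x_\star}$ and invoke the KKT conditions at $\x_\star$, which hold under Assumption \ref{slater}: there exist $\vpsi \in \partial h(\x_\star)$ and $\mub_\star \geq 0$ with $-\y_\star = \vpsi + \sum_k \mu_{k,\star}\nabla g_k(\x_\star)$, $\mu_{k,\star} g_k(\x_\star) = 0$, and (as shown after \eqref{optimalKKT}) $\norm{\mub_\star}_1 \leq \bt/\nu \leq \gamma$. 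Convexity of $h$ yields $\ip{\vpsi}{\hx_{t+1} - \x_\star} \leq h(\hx_{t+1}) - h(\x_\star)$, which cancels the $h$-difference exactly. Convexity of $g_k$ and complementary slackness give $\ip{\nabla g_k(\x_\star)}{\hx_{t+1} - \x_\star} \leq g_k(\hx_{t+1}) - g_k(\x_\star) \leq [g_k(\hx_{t+1})]_+$, so that $\sum_k \mu_{k,\star}\ip{\nabla g_k(\x_\star)}{\hx_{t+1} - \x_\star} \leq \norm{\mub_\star}_1 \max\{[g_k(\hx_{t+1})]_+\} \leq \gamma\max\{[g_k(\hx_{t+1})]_+\}$, which is precisely absorbed by the $-2\eta_t\gamma\max\{[g_k(\hx_{t+1})]_+\}$ term. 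What remains is exactly $-2\eta_t\ip{\y_t - \y_\star}{\hx_{t+1} - \x_\star}$, delivering \eqref{3pssqp-skip}.

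The main obstacle is the bookkeeping in this last step: both the $h$-difference and the constraint-violation residual must cancel simultaneously, and this tight cancellation is only possible because $\vpsi \in \partial h(\x_\star)$ satisfies the KKT stationarity representation and because $\gamma \geq \norm{\mub_\star}_1$ by the choice $\gamma = \bt/\nu$ in Algorithm \ref{ssqp-skip}. Everything else reduces to routine manipulation using the basic inequalities collected in Appendix \ref{basic}, mirroring the pattern already used in Lemma \ref{3plemma}.
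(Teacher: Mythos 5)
Your proposal is correct, and its skeleton coincides with the paper's proof: the $(p_t/\eta_t)$-strong convexity of the subproblem in \eqref{ssqp-skip1} gives the three-point inequality, the linearized max at $\x_\star$ is dropped via \eqref{gconv}, and $L_g$-smoothness upgrades the linearized max at $\hx_{t+1}$ to $\max\{[g_k(\hx_{t+1})]_+\}$ at the cost of $\tfrac{\gamma L_g}{2}\norm{\hx_{t+1}-\tx_{t+1}}^2$, exactly as in \eqref{3p-mid}. The only place you diverge is in establishing the final cancellation $h(\x_\star)-h(\hx_{t+1})+\ip{\x_\star-\hx_{t+1}}{\y_\star}\leq \gamma\max\{[g_k(\hx_{t+1})]_+\}$: the paper obtains this as \eqref{optskip} by observing that $\x_\star$ minimizes the $\tfrac{1}{\eta_t}$-strongly convex prox-linear model centered at $\x_\star$ (equation \eqref{optcon1}, i.e.\ first-order optimality of the penalized problem \eqref{penalty1}) and then applying \eqref{sclb}, whereas you unpack the KKT stationarity of \eqref{mainProb} into $-\y_\star=\vpsi+\sum_k\mu_{k,\star}\nabla g_k(\x_\star)$ and use complementary slackness together with $\norm{\mub_\star}_1\leq \bt/\nu=\gamma$. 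The two are equivalent in substance — the paper's optimality condition \eqref{optcon1} is itself only valid because $\gamma\geq\norm{\mub_\star}_1$ under Assumption \ref{slater}, the same facts you invoke explicitly — so neither route is more general; yours makes the dependence on the multiplier bound visible, while the paper's is slightly more compact and reusable as a prox-type inequality. One cosmetic remark: the lemma is stated only under Assumption \ref{a1}, but your argument (like the paper's, implicitly) also uses Assumption \ref{slater} and the choice $\gamma=\bt/\nu$; this is consistent with how the lemma is actually deployed in Theorem \ref{ssqp-skip-thm} and is not a gap.
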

\begin{IEEEproof}
	We begin with establishing three point inequality for the update $\hx_{t+1}$. Since \eqref{ssqp-skip1} involves minimizing a $\frac{p_t}{\eta_t}$-strongly convex function, we have that
	\begin{align}\label{3p-skip}
		&\tfrac{p_t}{2\eta_t}\norm{\tx_{t+1}-\hx_{t+1}}^2 + \ip{\hx_{t+1}-\x_\star}{\y_t} + h(\hx_{t+1}) \nonumber\\
		&\hspace{5mm} + \gamma\max\bc{\bsq{g_k(\tx_{t+1}) + \ip{\nabla g_k(\tx_{t+1})}{\hx_{t+1}-\tx_{t+1}}}_+} \nonumber\\
		&\lfrom{sclb} -\tfrac{p_t}{2\eta_t}\norm{\hx_{t+1}-\x_\star}^2 + \tfrac{p_t}{2\eta_t}\norm{\tx_{t+1}-\x_\star}^2 + h(\x_\star) \nonumber\\
		&\hspace{-1mm}+ \gamma\max\bc{\bsq{g_k(\tx_{t+1}) + \ip{\nabla g_k(\tx_{t+1})}{\x_{\star}-\tx_{t+1}}}_+}. 
	\end{align}
	Next, the convexity and $L_g$-smoothness of $g_k$ allow us to similarly use \eqref{gconv} and \eqref{3p-2}, respectively, with $\x_{t+1}$ in place of $\hx_{t+1}$ and $\x_t$ in place of $\tx_{t+1}$, yielding
	\begin{align}\label{3p-mid}
		&\tfrac{p_t}{2\eta_t}\norm{\hx_{t+1}-\x_\star}^2 \leq \tfrac{p_t}{2\eta_t}\norm{\tx_{t+1}-\x_\star}^2 -  \ip{\hx_{t+1}-\x_\star}{\y_t} \nonumber\\
		&- \br{\tfrac{p_t}{2\eta_t} - \tfrac{\gamma L_g}{2}}\norm{\hx_{t+1}-\tx_{t+1}}^2 \nonumber\\
		& + h(\x_\star) - h(\hx_{t+1}) - \gamma\max\bc{\bsq{g_k(\hx_{t+1})}_+}.
	\end{align}
	Finally, the first order optimality condition of \eqref{penalty1} yields
	\begin{align}
		\x_\star &= \arg\min_{\u\in\Rn^d} \tfrac{1}{2\eta_t}\norm{\u-\x_\star}^2 + \ip{\u}{\y_\star} + h(\u)  \nonumber\\
		&+ \gamma\max\bc{\bsq{g_k(\x_\star) + \ip{\nabla g_k(\x_\star)}{\u-\x_\star}}_+}, \label{optcon1}
	\end{align}
	for $\y_\star = \nabla f(\x_\star)$. Since \eqref{optcon1} again involves minimization of a $\tfrac{1}{\eta_t}$-strongly convex function, we have from \eqref{sclb} that:
	\begin{align}
		&h(\x_\star) - h(\hx_{t+1}) + \ip{\x_\star-\hx_{t+1}}{\y_\star}  \nonumber\\
		&\leq \gamma\max\bc{\bsq{g_k(\x_\star) + \ip{\nabla g_k(\x_\star)}{\hx_{t+1}-\x_\star}}_+} \\
		&\leq \gamma\max\bc{\bsq{g_k(\hx_{t+1})}_+}, \label{optskip}
	\end{align}
	where \eqref{optskip} follows from the convexity of $g_k$ and monotonicity of the $\max\{[\cdot]_+\}$ operator. Substituting \eqref{optskip} in \eqref{3p-mid} and multiplying by $2\eta_t$, we obtain the required result.
\end{IEEEproof}

The next lemma obtains a recursive inequality incorporating the effect of skipping \eqref{ssqp-skip1}. Recall that in Algorithm \ref{ssqp-skip}, $w_t \sim $Bernoulli$(p_t)$, and let $\Ewt{\cdot}$ denote the expectation with respect to $w_t$. 
\begin{lemma}\label{interskip}
	For $\eta_t \leq \frac{p_t}{2\gamma L_g}$ and $\y_\star = \nabla f(\x_\star)$, the updates in Algorithm \ref{ssqp-skip} imply that
	\begin{align}\label{onestprec}
		&\Ewt{\norm{\x_{t+1} - \x_\star}^2} + \tfrac{2\eta_t^2}{p_t^2}\Ewt{\norm{\y_{t+1} - \y_\star}^2}  \\
		&\leq \norm{\x_t-\x_\star - \eta_t\br{\nabla f_{i_t}(\x_t)-\y_\star}}^2 + \tfrac{\eta_t^2(2-p_t^2)}{p_t^2}\norm{\y_t-\y_\star}^2. \nonumber
	\end{align} 
\end{lemma}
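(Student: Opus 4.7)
My plan is to directly compute the $w_t$-expectations of the two squared norms on the left-hand side, combine them so that the $\hx_{t+1}$-dependent terms cancel, and then re-express what remains in the desired form using the definition $\tx_{t+1} = \x_t - \eta_t(\nabla f_{i_t}(\x_t) - \y_t)$.

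First I would unpack the update \eqref{ssqp-up}: since $w_t \sim \text{Bernoulli}(p_t)$,
\[
\Ewt{\norm{\x_{t+1}-\x_\star}^2} = p_t\norm{\hx_{t+1}-\x_\star}^2 + (1-p_t)\norm{\tx_{t+1}-\x_\star}^2,
\]
and then apply Lemma \ref{prelim-skip} to the $p_t\norm{\hx_{t+1}-\x_\star}^2$ piece. The $(1-p_t)\norm{\tx_{t+1}-\x_\star}^2$ and the $p_t\norm{\tx_{t+1}-\x_\star}^2$ produced by the lemma combine to a clean $\norm{\tx_{t+1}-\x_\star}^2$, leaving a cross term $-2\eta_t\ip{\hx_{t+1}-\x_\star}{\y_t-\y_\star}$ and a residual $-(p_t-\gamma\eta_tL_g)\norm{\hx_{t+1}-\tx_{t+1}}^2$.

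Next I would expand $\Ewt{\norm{\y_{t+1}-\y_\star}^2}$ using the dual update in Step 6 of Algorithm \ref{ssqp-skip}, noting that $\y_{t+1} = \y_t$ when $w_t=0$ and $\y_{t+1} = \y_t + \tfrac{p_t}{2\eta_t}(\hx_{t+1}-\tx_{t+1})$ when $w_t=1$. Expanding the square and multiplying by the weight $2\eta_t^2/p_t^2$ produces exactly $\tfrac{2\eta_t^2}{p_t^2}\norm{\y_t-\y_\star}^2 + \tfrac{p_t}{2}\norm{\hx_{t+1}-\tx_{t+1}}^2 + 2\eta_t\ip{\y_t-\y_\star}{\hx_{t+1}-\tx_{t+1}}$. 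Adding the two bounds, the two $\ip{\cdot}{\y_t-\y_\star}$ cross terms collapse (the $\hx_{t+1}$ pieces cancel) into $-2\eta_t\ip{\y_t-\y_\star}{\tx_{t+1}-\x_\star}$, while the two $\norm{\hx_{t+1}-\tx_{t+1}}^2$ contributions combine to $-(p_t/2 - \gamma\eta_tL_g)\norm{\hx_{t+1}-\tx_{t+1}}^2$, which is nonpositive under the stepsize condition $\eta_t \leq p_t/(2\gamma L_g)$ and hence can be discarded.

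It then remains to show
\[
\norm{\tx_{t+1}-\x_\star}^2 - 2\eta_t\ip{\y_t-\y_\star}{\tx_{t+1}-\x_\star} + \tfrac{2\eta_t^2}{p_t^2}\norm{\y_t-\y_\star}^2
\]
equals the right-hand side of the lemma. I would introduce the shorthand $\a := \x_t - \x_\star - \eta_t(\nabla f_{i_t}(\x_t)-\y_\star)$ and observe that $\tx_{t+1}-\x_\star = \a + \eta_t(\y_t-\y_\star)$. Expanding $\norm{\tx_{t+1}-\x_\star}^2$ and $-2\eta_t\ip{\y_t-\y_\star}{\tx_{t+1}-\x_\star}$ in terms of $\a$ and $\y_t-\y_\star$, the linear-in-$\a$ cross terms cancel and the $\eta_t^2\norm{\y_t-\y_\star}^2$ contributions combine into $-\eta_t^2\norm{\y_t-\y_\star}^2$, which together with the $\tfrac{2\eta_t^2}{p_t^2}\norm{\y_t-\y_\star}^2$ term yields the advertised coefficient $\tfrac{\eta_t^2(2-p_t^2)}{p_t^2}$.

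The main obstacle, as far as I can see, is the careful bookkeeping of the cross terms after the Bernoulli averaging: the reason the inequality works at all is that the factor $p_t/(2\eta_t)$ inside the $\y_{t+1}$ update in Step 6 and the weight $2\eta_t^2/p_t^2$ on $\norm{\y_{t+1}-\y_\star}^2$ are chosen precisely so that (i) the $\ip{\hx_{t+1}}{\y_t-\y_\star}$ terms from the two expansions cancel exactly, and (ii) the $\norm{\hx_{t+1}-\tx_{t+1}}^2$ coefficient becomes exactly $p_t/2$, which is what lets the stepsize condition $\eta_t\le p_t/(2\gamma L_g)$ absorb the $-(p_t-\gamma\eta_tL_g)$ term coming from Lemma \ref{prelim-skip}. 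Keeping these cancellations straight, rather than any deep analytical step, is the crux.
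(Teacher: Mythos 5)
Your proposal is correct and follows essentially the same route as the paper's proof: Bernoulli averaging of both squared norms, application of Lemma \ref{prelim-skip}, exact cancellation of the $\hx_{t+1}$ cross terms after weighting by $2\eta_t^2/p_t^2$, dropping the $-(p_t/2-\gamma\eta_t L_g)\norm{\hx_{t+1}-\tx_{t+1}}^2$ term via the stepsize condition, and the final re-expansion around $\x_t-\x_\star-\eta_t(\nabla f_{i_t}(\x_t)-\y_\star)$. All coefficients and cancellations check out.
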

\begin{IEEEproof}
	From the update in \eqref{ssqp-up} and the result of Lemma \ref{prelim-skip}, we have that
	\begin{align}
		\Ewt{\norm{\x_{t+1} - \x_\star}^2} &= p_t\norm{\hx_{t+1} - \x_\star}^2 + (1-p_t)\norm{\tx_{t+1} - \x_\star}^2 \nonumber\\
		&\hspace{-3mm}\lfrom{3pssqp-skip} \norm{\tx_{t+1} - \x_\star}^2 -  2\eta_t\ip{\hx_{t+1}-\x_\star}{\y_t-\y_\star} \nonumber\\
		&- \br{p_t - \eta_t\gamma L_g}\norm{\hx_{t+1}-\tx_{t+1}}^2. \label{proofskip1}
	\end{align}
	We also note from the update of $\y_{t+1}$ in Algorithm \ref{ssqp-skip} that $\y_{t+1} = \y_t + \tfrac{w_tp_t}{2\eta_t}(\hx_{t+1}-\tx_{t+1})$ so that 
	\begin{align}
		\Ewt{\norm{\y_{t+1}-\y_\star}^2} &= p_t\norm{\y_t-\y_\star + \tfrac{p_t}{2\eta_t}\br{\hx_{t+1}-\tx_{t+1}}}^2 \nonumber\\
		&+ (1-p_t)\norm{\y_t-\y_\star}^2 \\
		&= \norm{\y_t-\y_\star}^2 + \tfrac{p_t^3}{4\eta_t^2}\norm{\hx_{t+1}-\tx_{t+1}}^2 \nonumber\\
		&+ \tfrac{p_t^2}{\eta_t}\ip{\hx_{t+1}-\tx_{t+1}}{\y_t-\y_\star}. \label{proofskip2}
	\end{align}
	Multiplying \eqref{proofskip2} by $\tfrac{2\eta_t^2}{p_t^2}$ and adding with \eqref{proofskip1}, we obtain
	\begin{align}
		&\Ewt{\norm{\x_{t+1} - \x_\star}^2} + \tfrac{2\eta_t^2}{p_t^2}	\Ewt{\norm{\y_{t+1}-\y_\star}^2} \nonumber\\
		&\leq \norm{\tx_{t+1}-\x_\star}^2 -2\eta_t\ip{\tx_{t+1}-\x_\star}{\y_t-\y_\star} \nonumber\\
		& + \tfrac{2\eta_t^2}{p_t^2}\norm{\y_t-\y_\star}^2 - \br{\tfrac{p_t}{2}-\eta_t\gamma L_g}\norm{\hx_{t+1}-\tx_{t+1}}^2, \label{rec-skip}
	\end{align}
	where the last term can be dropped if $p_t > 2\eta_t\gamma L_g$. 
	
	Next, we have from \eqref{sgd-up} that
	\begin{align}\label{tx-xs}
		&\norm{\tx_{t+1}-\x_\star}^2 = \norm{\x_t-\x_\star - \eta_t\nabla f_{i_t}(\x_t) + \eta_t \y_t}^2 \nonumber\\
		&= \norm{\x_t-\x_\star - \eta_t\br{\nabla f_{i_t}(\x_t)-\y_\star} + \eta_t\br{\y_t - \y_\star}}^2 \nonumber\\
		&= \norm{\x_t-\x_\star - \eta_t\br{\nabla f_{i_t}(\x_t)-\y_\star}}^2 + \eta_t^2\norm{\y_t - \y_\star}^2 \nonumber\\
		&+ 2\eta_t\ip{\x_t-\x_\star - \eta_t\br{\nabla f_{i_t}(\x_t)-\y_\star}}{\y_t - \y_\star},
	\end{align}
	and similarly
	\begin{align}\label{yttxip}
		&\ip{\tx_{t+1}-\x_\star}{\y_t-\y_\star} \\
		&= \ip{\x_t-\x_\star - \eta_t\nabla f_{i_t}(\x_t) + \eta_t \y_t}{\y_t-\y_\star}\nonumber\\
		&= \ip{\x_t-\x_\star - \eta_t\br{\nabla f_{i_t}(\x_t)-\y_\star} + \eta_t\br{\y_t - \y_\star}}{\y_t-\y_\star} \nonumber\\
		& = \eta_t\norm{\y_t-\y_\star}^2 + \ip{\x_t-\x_\star - \eta_t\br{\nabla f_{i_t}(\x_t)-\y_\star}}{\y_t-\y_\star}. \nonumber
	\end{align}
	Substituting \eqref{tx-xs}-\eqref{yttxip} in \eqref{rec-skip}, we obtain the required result. 
\end{IEEEproof}
Having derived the key recursive inequality, we now proceed with proving the main result. 
\begin{IEEEproof}[Proof of Theorem \ref{ssqp-skip-thm}]
	From the update in \eqref{sgd-up}, we obtain
	\begin{align}
		&\norm{\x_t-\x_\star - \eta_t\br{\nabla f_{i_t}(\x_t)-\y_\star}}^2 = \norm{\x_t-\x_\star}^2 \\
		&+ \eta_t^2\norm{\nabla f_{i_t}(\x_t)-\y_\star}^2 - 2\eta_t\ip{\x_t-\x_\star}{\nabla f_{i_t}(\x_t)-\y_\star}. \nonumber
	\end{align}
	Taking expectation with respect to the random variable $i_t$, we obtain
	\begin{align}\label{xtxsrel}
		&\Et{\norm{\x_t-\x_\star - \eta_t\br{\nabla f_{i_t}(\x_t)-\y_\star}}^2} =\norm{\x_t-\x_\star}^2 \\
		&+ \eta_t^2\Et{\norm{\nabla f_{i_t}(\x_t)-\y_\star}^2} - 2\eta_t\ip{\x_t-\x_\star}{\nabla f(\x_t)-\y_\star}\nonumber.
	\end{align}
	Recalling that $\y_\star = \nabla f(\x_\star)$ and using \eqref{gradvar1}, the second term on the right can be bounded as
	\begin{align}
		&\Et{\norm{\nabla f_{i_t}(\x_t)-\y_\star}^2} \leq4L_fD_f(\x_\star,\x_t) + 2\sigma^2.\label{proofskip3}
	\end{align}
	We further have that 
	\begin{align}
		\ip{\x_t-\x_\star}{\nabla f(\x_t)-\y_\star} = D_f(\x_\star,\x_t) + D_f(\x_t,\x_\star).\label{proofskip4}
	\end{align}
	Substituting \eqref{proofskip3}-\eqref{proofskip4} into \eqref{xtxsrel}, we obtain
	\begin{align}
		&\Et{\norm{\x_t-\x_\star - \eta_t\br{\nabla f_{i_t}(\x_t)-\y_\star}}^2} \leq \norm{\x_t-\x_\star}^2\\
		& - 2\eta_t(1-2\eta_tL_f)D_f(\x_\star,\x_t) - 2\eta_tD_f(\x_t,\x_\star) + 2\eta_t^2\sigma^2. \nonumber
	\end{align}
	Here, we can drop the non-positive second term on the right since $\eta_t \leq \frac{p_t}{2L} \leq \frac{1}{2L_f}$ and use \eqref{qlb} to obtain
	\begin{align}
		&\Et{\norm{\x_t-\x_\star - \eta_t\br{\nabla f_{i_t}(\x_t)-\y_\star}}^2} \nonumber\\
		&\leq (1-\mu\eta_t)\norm{\x_t-\x_\star}^2 + 2\eta_t^2\sigma^2 .\label{proofskip5}
	\end{align}
	Taking full expectation in \eqref{onestprec} and substituting \eqref{proofskip5}, we obtain 
	\begin{align}
		&\E{\norm{\x_{t+1} - \x_\star}^2} + \tfrac{2\eta_t^2}{p_t^2}\E{\norm{\y_{t+1} - \y_\star}^2}  \nonumber\\
		&\leq (1-\mu\eta_t) \E{\norm{\x_t-\x_\star}^2} +2\eta_t^2\sigma^2 \nonumber\\
		&+\tfrac{\eta_t^2(2-p_t^2)}{p_t^2}\E{\norm{\y_t-\y_\star}^2}. 
	\end{align}
	Let us denote $\Psi_{t+1} := \E{\norm{\x_{t+1} - \x_\star}^2 + \frac{\eta_t}{\mu}\norm{\y_{t+1} - \y_\star}^2}$. From the initialization and Assumption \ref{gradnoi}, we have that $\EE\norm{\y_0 - \y_\star}^2 \leq 2L_f^2\norm{\x_0 - \x_\star}^2 + 4\sigma^2$, implying that $\Psi_0 \leq \delta_0 + \frac{\eta_{-1}}{\mu}\EE\norm{\y_0-\y_\star}^2 \leq \br{1+4\kappa^2}\delta_0 + \tfrac{4\sigma^2}{\mu^2}$ for $\eta_{-1} \leq \frac{2}{\mu}$. If we set $p_t = \sqrt{2\mu\eta_t}$, we would obtain the one-step inequality:
	\begin{align}
		\Psi_{t+1} \leq (1-\mu\eta_t)\Psi_t + 2\eta_t^2\sigma^2,
	\end{align}
	for $\eta_t \leq \eta_{t-1}$. Recall that Lemma \ref{interskip} also requires that $\eta_t \leq p_t/2L$ or equivalently $\eta_t \leq \frac{\mu}{2L^2}$. Therefore, if we set $\eta_t = \frac{2}{\mu(t+\omega+1)}$ for $t\geq -1$ with $\omega = \lceil \frac{4L^2}{\mu^2} \rceil$, and proceed as in \eqref{seq1}-\eqref{sc-sgd}, we obtain the bound
	\begin{align}
		\Psi_T  \leq \tfrac{8\sigma^2}{\mu^2T} + \tfrac{\omega^2\Psi_0}{T^2} \leq \tfrac{8\sigma^2}{\mu^2T} + \tfrac{4\kappa^4((1+4\kappa^2)\mu^2\delta_0+4\sigma^2)}{\mu^2T^2}.
	\end{align}
	The obtained bounds hence translate to an SFO complexity of $\O{\frac{\sigma^2}{\mu^2\epsilon} + \kappa^2\tfrac{\kappa\sqrt{\delta_0}+\sigma}{\sqrt{\epsilon}}}$. However, the average number of calls to the QMO are bounded as $\sum_{t=1}^{T-1} p_t \leq 4\sqrt{T+\omega}$ or $\O{\frac{\sigma}{\kappa + \mu\sqrt{\epsilon}} + \frac{\kappa\sqrt{\kappa\sqrt{\delta_0}+\sigma}}{\epsilon^{1/4}}}$. 
\end{IEEEproof}

\footnotesize

\bibliographystyle{IEEEtran} 
\bibliography{IEEEabrv,references}

\begin{thebibliography}{10}
\providecommand{\url}[1]{#1}
\csname url@samestyle\endcsname
\providecommand{\newblock}{\relax}
\providecommand{\bibinfo}[2]{#2}
\providecommand{\BIBentrySTDinterwordspacing}{\spaceskip=0pt\relax}
\providecommand{\BIBentryALTinterwordstretchfactor}{4}
\providecommand{\BIBentryALTinterwordspacing}{\spaceskip=\fontdimen2\font plus
\BIBentryALTinterwordstretchfactor\fontdimen3\font minus
  \fontdimen4\font\relax}
\providecommand{\BIBforeignlanguage}[2]{{%
\expandafter\ifx\csname l@#1\endcsname\relax
\typeout{** WARNING: IEEEtran.bst: No hyphenation pattern has been}%
\typeout{** loaded for the language `#1'. Using the pattern for}%
\typeout{** the default language instead.}%
\else
\language=\csname l@#1\endcsname
\fi
#2}}
\providecommand{\BIBdecl}{\relax}
\BIBdecl

\bibitem{dieuleveut2023stochastic}
A.~Dieuleveut, G.~Fort, E.~Moulines, and H.-T. Wai, ``Stochastic approximation
  beyond gradient for signal processing and machine learning,'' \emph{IEEE
  Transactions on Signal Processing}, vol.~71, pp. 3117--3148, 2023.

\bibitem{ribeiro2010ergodic}
A.~Ribeiro, ``Ergodic stochastic optimization algorithms for wireless
  communication and networking,'' \emph{{IEEE} Trans. Signal Process.},
  vol.~58, no.~12, pp. 6369--6386, 2010.

\bibitem{xin2020variance}
R.~Xin, U.~A. Khan, and S.~Kar, ``Variance-reduced decentralized stochastic
  optimization with accelerated convergence,'' \emph{IEEE Transactions on
  Signal Processing}, vol.~68, pp. 6255--6271, 2020.

\bibitem{yooEnsemble}
C.~Yoo, J.~J. Heon~Lee, S.~Anstee, and R.~Fitch, ``Path planning in uncertain
  ocean currents using ensemble forecasts,'' in \emph{IEEE Intl. Conf. on
  Robotics and Automation (ICRA)}, 2021, pp. 8323--8329.

\bibitem{gollamudi1998set}
S.~Gollamudi, S.~Nagaraj, S.~Kapoor, and Y.-F. Huang, ``Set-membership
  filtering and a set-membership normalized lms algorithm with an adaptive step
  size,'' \emph{IEEE Signal Processing Letters}, vol.~5, no.~5, pp. 111--114,
  1998.

\bibitem{bhotto2011robust}
M.~Z.~A. Bhotto and A.~Antoniou, ``Robust set-membership affine-projection
  adaptive-filtering algorithm,'' \emph{IEEE Transactions on Signal
  Processing}, vol.~60, no.~1, pp. 73--81, 2011.

\bibitem{flores2019set}
A.~Flores and R.~C. de~Lamare, ``Set-membership adaptive kernel nlms
  algorithms: Design and analysis,'' \emph{Signal Processing}, vol. 154, 2019.

\bibitem{necoara2022stochastic}
I.~Necoara and N.~K. Singh, ``Stochastic subgradient for composite convex
  optimization with functional constraints,'' \emph{Journal of Machine Learning
  Research}, vol.~23, no. 265, pp. 1--35, 2022.

\bibitem{lan2020algorithms}
G.~Lan and Z.~Zhou, ``Algorithms for stochastic optimization with function or
  expectation constraints,'' \emph{Computational Optimization and
  Applications}, vol.~76, no.~2, pp. 461--498, 2020.

\bibitem{nedic2019random}
A.~Nedi{\'c} and I.~Necoara, ``Random minibatch subgradient algorithms for
  convex problems with functional constraints,'' \emph{Applied Mathematics and
  Optimization}, vol.~80, no.~3, pp. 801--833, 2019.

\bibitem{basu2019optimal}
K.~Basu and P.~Nandy, ``Optimal convergence for stochastic optimization with
  multiple expectation constraints,'' \emph{arXiv preprint arXiv:1906.03401},
  2019.

\bibitem{bayandina2018mirror}
A.~Bayandina, P.~Dvurechensky, A.~Gasnikov, F.~Stonyakin, and A.~Titov,
  ``Mirror descent and convex optimization problems with non-smooth inequality
  constraints,'' in \emph{Large-Scale and Distributed Optimization}.\hskip 1em
  plus 0.5em minus 0.4em\relax Springer, 2018, pp. 181--213.

\bibitem{stonyakin2019adaptive}
F.~S. Stonyakin, M.~Alkousa, A.~N. Stepanov, and A.~A. Titov, ``Adaptive mirror
  descent algorithms for convex and strongly convex optimization problems with
  functional constraints,'' \emph{Journal of Applied and Industrial
  Mathematics}, vol.~13, no.~3, pp. 557--574, 2019.

\bibitem{alkousa2020modification}
M.~S. Alkousa, ``On modification of an adaptive stochastic mirror descent
  algorithm for convex optimization problems with functional constraints,'' in
  \emph{Computational Mathematics and Applications}.\hskip 1em plus 0.5em minus
  0.4em\relax Springer, 2020, pp. 47--63.

\bibitem{alkousa2019some}
M.~Alkousa \emph{et~al.}, ``On some stochastic mirror descent methods for
  constrained online optimization problems,'' \emph{Computer research and
  modeling}, vol.~11, no.~2, pp. 205--217, 2019.

\bibitem{nesterov2009primal}
Y.~Nesterov, ``Primal-dual subgradient methods for convex problems,''
  \emph{Mathematical programming}, vol. 120, no.~1, pp. 221--259, 2009.

\bibitem{nemirovski2009robust}
A.~Nemirovski, A.~Juditsky, G.~Lan, and A.~Shapiro, ``Robust stochastic
  approximation approach to stochastic programming,'' \emph{SIAM Journal on
  optimization}, vol.~19, no.~4, pp. 1574--1609, 2009.

\bibitem{xu2020primal}
Y.~Xu, ``Primal-dual stochastic gradient method for convex programs with many
  functional constraints,'' \emph{SIAM Journal on Optimization}, vol.~30,
  no.~2, pp. 1664--1692, 2020.

\bibitem{yazdandoost2019randomized}
E.~Yazdandoost~Hamedani, A.~Jalilzadeh, and N.~Serhat~Aybat, ``A randomized
  block-coordinate primal-dual method for large-scale stochastic saddle point
  problems,'' \emph{arXiv e-prints}, pp. arXiv--1907, 2019.

\bibitem{madavan2021stochastic}
A.~N. Madavan and S.~Bose, ``A stochastic primal-dual method for optimization
  with conditional value at risk constraints,'' \emph{Journal of Optimization
  Theory and Applications}, vol. 190, no.~2, pp. 428--460, 2021.

\bibitem{yan2022adaptive}
Y.~Yan and Y.~Xu, ``Adaptive primal-dual stochastic gradient method for
  expectation-constrained convex stochastic programs,'' \emph{Mathematical
  Programming Computation}, vol.~14, no.~2, pp. 319--363, 2022.

\bibitem{yuan2018online}
J.~Yuan and A.~Lamperski, ``Online convex optimization for cumulative
  constraints,'' in \emph{Proc. of the Intl. Conf. on Neural Information
  Processing Systems}, 2018, p. 6140–6149.

\bibitem{yu2017online}
H.~Yu and M.~J. Neely, ``Online convex optimization with stochastic
  constraints,'' in \emph{Proc. of the Intl. Conf. on Neural Information
  Processing Systems}, Long Beach, CA, USA, 2017.

\bibitem{lin2018level}
Q.~Lin, R.~Ma, and T.~Yang, ``Level-set methods for finite-sum constrained
  convex optimization,'' in \emph{Intl. Conf. on Machine Learning}.\hskip 1em
  plus 0.5em minus 0.4em\relax PMLR, 2018, pp. 3112--3121.

\bibitem{han1979exact}
S.-P. Han and O.~L. Mangasarian, ``Exact penalty functions in nonlinear
  programming,'' \emph{Mathematical programming}, vol.~17, no.~1, pp. 251--269,
  1979.

\bibitem{lin2003some}
G.-H. Lin and M.~Fukushima, ``Some exact penalty results for nonlinear programs
  and mathematical programs with equilibrium constraints,'' \emph{Journal of
  Optimization Theory and Applications}, vol. 118, no.~1, pp. 67--80, 2003.

\bibitem{zhang2021stochastic}
J.~Zhang and L.~Xiao, ``Stochastic variance-reduced prox-linear algorithms for
  nonconvex composite optimization,'' \emph{Mathematical Programming}, pp.
  1--43, 2021.

\bibitem{bertsekas}
D.~P. Bertsekas, \emph{Nonlinear Programming}.\hskip 1em plus 0.5em minus
  0.4em\relax Athena Scientific, 1997.

\bibitem{bedi2018tracking}
A.~S. Bedi, P.~Sarma, and K.~Rajawat, ``Tracking moving agents via inexact
  online gradient descent algorithm,'' \emph{IEEE Journal of Selected Topics in
  Signal Processing}, vol.~12, no.~1, pp. 202--217, 2018.

\bibitem{gill2012sequential}
P.~E. Gill and E.~Wong, ``Sequential quadratic programming methods,'' in
  \emph{Mixed integer nonlinear programming}.\hskip 1em plus 0.5em minus
  0.4em\relax Springer, 2012, pp. 147--224.

\bibitem{curtis2024sequential}
F.~E. Curtis, D.~P. Robinson, and B.~Zhou, ``Sequential quadratic optimization
  for stochastic optimization with deterministic nonlinear inequality and
  equality constraints,'' \emph{SIAM Journal on Optimization}, vol.~34, no.~4,
  pp. 3592--3622, 2024.

\bibitem{doikov2022optimization}
N.~Doikov and Y.~Nesterov, ``High-order optimization methods for fully
  composite problems,'' \emph{SIAM Journal on Optimization}, vol.~32, no.~3,
  pp. 2402--2427, 2022.

\bibitem{davis2019stochastic}
D.~Davis and D.~Drusvyatskiy, ``Stochastic model-based minimization of weakly
  convex functions,'' \emph{SIAM Journal on Optimization}, vol.~29, no.~1, pp.
  207--239, 2019.

\bibitem{wang2017penalty}
X.~Wang, S.~Ma, and Y.-x. Yuan, ``Penalty methods with stochastic approximation
  for stochastic nonlinear programming,'' \emph{Mathematics of computation},
  vol.~86, no. 306, pp. 1793--1820, 2017.

\bibitem{xiao2019penalized}
X.~Xiao, ``Penalized stochastic gradient methods for stochastic convex
  optimization with expectation constraints,'' \emph{Optimization-online},
  2019.

\bibitem{thomdapu2019optimal}
S.~T. Thomdapu and K.~Rajawat, ``Optimal design of queuing systems via
  compositional stochastic programming,'' \emph{IEEE Trans. Commun.}, vol.~67,
  no.~12, pp. 8460--8474, 2019.

\bibitem{bertsekas2014constrained}
D.~P. Bertsekas, \emph{Constrained optimization and Lagrange multiplier
  methods}.\hskip 1em plus 0.5em minus 0.4em\relax Academic press, 2014.

\bibitem{zhang2013logt}
L.~Zhang, T.~Yang, R.~Jin, and X.~He, ``$\mathcal{O}(\log t)$ projections for
  stochastic optimization of smooth and strongly convex functions,'' in
  \emph{Intl Conf. on Machine Learning}.\hskip 1em plus 0.5em minus 0.4em\relax
  PMLR, 2013, pp. 1121--1129.

\bibitem{chen2016optimal}
J.~Chen, T.~Yang, Q.~Lin, L.~Zhang, and Y.~Chang, ``Optimal stochastic strongly
  convex optimization with a logarithmic number of projections,'' in
  \emph{Thirty-Second Conf. on Uncertainty in Artificial Intelligence}.\hskip
  1em plus 0.5em minus 0.4em\relax AUAI Press, 2016, pp. 122--131.

\bibitem{mahdavi2012stochastic}
M.~Mahdavi, T.~Yang, R.~Jin, S.~Zhu, and J.~Yi, ``Stochastic gradient descent
  with only one projection,'' in \emph{Proc. of the Intl. Conf. on Neural
  Information Processing Systems}, 2012, pp. 494--502.

\bibitem{lan2019unified}
G.~Lan, Z.~Li, and Y.~Zhou, ``A unified variance-reduced accelerated gradient
  method for convex optimization,'' in \emph{Proc. of the Intl. Conf. on Neural
  Information Processing Systems}, 2019, pp. 10\,462--10\,472.

\bibitem{jalilzadeh2021primal}
A.~Jalilzadeh, ``Primal-dual incremental gradient method for nonsmooth and
  convex optimization problems,'' \emph{Optimization Letters}, vol.~15, no.~8,
  pp. 2541--2554, 2021.

\bibitem{fercoq2019almost}
O.~Fercoq, A.~Alacaoglu, I.~Necoara, and V.~Cevher, ``Almost surely constrained
  convex optimization,'' in \emph{International Conf. on Machine
  Learning}.\hskip 1em plus 0.5em minus 0.4em\relax PMLR, 2019, pp. 1910--1919.

\bibitem{kundu2018convex}
A.~Kundu, F.~Bach, and C.~Bhattacharya, ``Convex optimization over intersection
  of simple sets: improved convergence rate guarantees via an exact penalty
  approach,'' in \emph{International Conf. on Artificial Intelligence and
  Statistics}.\hskip 1em plus 0.5em minus 0.4em\relax PMLR, 2018, pp. 958--967.

\bibitem{boob2023stochastic}
D.~Boob, Q.~Deng, and G.~Lan, ``Stochastic first-order methods for convex and
  nonconvex functional constrained optimization,'' \emph{Mathematical
  Programming}, vol. 197, no.~1, pp. 215--279, 2023.

\bibitem{akhtar2021conservative}
Z.~Akhtar, A.~Singh~Bedi, and K.~Rajawat, ``Conservative stochastic
  optimization with expectation constraints,'' \emph{{IEEE} Trans. Signal
  Process.}, vol.~69, pp. 3190--3205, 2021.

\bibitem{thomdapu2021optimizing}
S.~T. Thomdapu and K.~Rajawat, ``Optimizing {QOS} for erasure-coded wireless
  data centers,'' in \emph{IEEE Intl. Conf. on Commun.}, 2021, pp. 1--6.

\bibitem{thomdapu2023stochastic}
S.~T. Thomdapu, H.~Vardhan, and K.~Rajawat, ``Stochastic compositional gradient
  descent under compositional constraints,'' \emph{{IEEE} Trans. Signal
  Process.}, vol.~71, pp. 1115--1127, 2023.

\bibitem{necoara2021minibatch}
I.~Necoara and A.~Nedi{\'c}, ``Minibatch stochastic subgradient-based
  projection algorithms for feasibility problems with convex inequalities,''
  \emph{Computational Optimization and Applications}, vol.~80, no.~1, pp.
  121--152, 2021.

\bibitem{wang2016stochastic}
M.~Wang and J.~Liu, ``A stochastic compositional gradient method using markov
  samples,'' in \emph{Proc. of the IEEE WSC}, Dec. 2016, pp. 702--713.

\bibitem{wei2018solving}
X.~Wei, H.~Yu, Q.~Ling, and M.~J. Neely, ``Solving non-smooth constrained
  programs with lower complexity than $\mathcal{O}(1/\varepsilon)$: a
  primal-dual homotopy smoothing approach,'' in \emph{Proc. of the Intl. Conf.
  on Neural Information Processing Systems}, 2018, pp. 3999--4009.

\bibitem{yang2017richer}
T.~Yang, Q.~Lin, and L.~Zhang, ``A richer theory of convex constrained
  optimization with reduced projections and improved rates,'' in
  \emph{International Conf. on Machine Learning}.\hskip 1em plus 0.5em minus
  0.4em\relax PMLR, 2017, pp. 3901--3910.

\bibitem{guo2022online}
H.~Guo, H.~Wei, X.~Liu, and L.~Ying, ``Online convex optimization with hard
  constraints: towards the best of two worlds and beyond,'' in \emph{Proc. of
  the Intl. Conf. on Neural Information Processing Systems}, 2022, pp.
  36\,426--36\,439.

\bibitem{sinha2024optimal}
A.~Sinha and R.~Vaze, ``Optimal algorithms for online convex optimization with
  adversarial constraints,'' in \emph{Proc. of the Intl. Conf. on Neural
  Information Processing Systems}, 2024, pp. 41\,274--41\,302.

\bibitem{wang2016stochastic2}
M.~Wang and D.~P. Bertsekas, ``Stochastic first-order methods with random
  constraint projection,'' \emph{SIAM Journal on Optimization}, vol.~26, no.~1,
  pp. 681--717, 2016.

\bibitem{idrees2025decentralized}
B.~M. Idrees, S.~D. Sharma, and K.~Rajawat, ``Decentralized stochastic
  successive convex approximation for composite non-convex problems with
  non-linear functional constraints,'' in \emph{IEEE ICASSP}, 2025.

\bibitem{bauschke1999strong}
H.~H. Bauschke, J.~M. Borwein, and W.~Li, ``Strong conical hull intersection
  property, bounded linear regularity, jameson’s property (g), and error
  bounds in convex optimization,'' \emph{Mathematical Programming}, vol.~86,
  no.~1, pp. 135--160, 1999.

\bibitem{duchi2018stochastic}
J.~C. Duchi and F.~Ruan, ``Stochastic methods for composite and weakly convex
  optimization problems,'' \emph{SIAM Journal on Optimization}, vol.~28, no.~4,
  pp. 3229--3259, 2018.

\bibitem{drusvyatskiy2019efficiency}
D.~Drusvyatskiy and C.~Paquette, ``Efficiency of minimizing compositions of
  convex functions and smooth maps,'' \emph{Mathematical Programming}, vol.
  178, no.~1, pp. 503--558, 2019.

\bibitem{khaled2023unified}
A.~Khaled, O.~Sebbouh, N.~Loizou, R.~M. Gower, and P.~Richt{\'a}rik, ``Unified
  analysis of stochastic gradient methods for composite convex and smooth
  optimization,'' \emph{Journal of Optimization Theory and Applications}, vol.
  199, no.~2, pp. 499--540, 2023.

\bibitem{gorbunov2020unified}
E.~Gorbunov, F.~Hanzely, and P.~Richt{\'a}rik, ``A unified theory of sgd:
  Variance reduction, sampling, quantization and coordinate descent,'' in
  \emph{International Conf. on Artificial Intelligence and Statistics}.\hskip
  1em plus 0.5em minus 0.4em\relax PMLR, 2020, pp. 680--690.

\bibitem{mishchenko2022proxskip}
K.~Mishchenko, G.~Malinovsky, S.~U. Stich, and P.~Richtarik, ``Prox{S}kip: Yes!
  local gradient steps provably lead to communication acceleration! finally!''
  in \emph{Proc. of the Intl. Conf. on Machine Learning}, 2022, pp.
  15\,750--15\,769.

\bibitem{allen2017katyusha}
Z.~Allen-Zhu, ``Katyusha: The first direct acceleration of stochastic gradient
  methods,'' \emph{The Journal of Machine Learning Research}, vol.~18, no.~1,
  pp. 8194--8244, 2017.

\bibitem{shang2018asvrg}
F.~Shang, L.~Jiao, K.~Zhou, J.~Cheng, Y.~Ren, and Y.~Jin, ``Asvrg: Accelerated
  proximal {SVRG},'' in \emph{Asian Conf. on Machine Learning}.\hskip 1em plus
  0.5em minus 0.4em\relax PMLR, 2018, pp. 815--830.

\bibitem{song2020variance}
C.~Song, Y.~Jinag, and Y.~Ma, ``Variance reduction via accelerated dual
  averaging for finite-sum optimization,'' in \emph{Proc. of the Intl. Conf. on
  Neural Information Processing Systems}, 2020, pp. 833--844.

\bibitem{xu2021iteration}
Y.~Xu, ``Iteration complexity of inexact augmented lagrangian methods for
  constrained convex programming,'' \emph{Mathematical Programming}, vol. 185,
  no.~1, pp. 199--244, 2021.

\bibitem{lin2018levelDet}
Q.~Lin, S.~Nadarajah, and N.~Soheili, ``A level-set method for convex
  optimization with a feasible solution path,'' \emph{SIAM Journal on
  Optimization}, vol.~28, no.~4, pp. 3290--3311, 2018.

\bibitem{yu2017simple}
H.~Yu and M.~J. Neely, ``A simple parallel algorithm with an $\mathcal{O}(1/t)$
  convergence rate for general convex programs,'' \emph{SIAM Journal on
  Optimization}, vol.~27, no.~2, pp. 759--783, 2017.

\bibitem{zermelo1931navigationsproblem}
E.~Zermelo, ``{\"U}ber das navigationsproblem bei ruhender oder
  ver{\"a}nderlicher windverteilung,'' \emph{ZAMM-Journal of Applied
  Mathematics and Mechanics/Zeitschrift f{\"u}r Angewandte Mathematik und
  Mechanik}, vol.~11, no.~2, pp. 114--124, 1931.

\bibitem{mercator}
\BIBentryALTinterwordspacing
{Mercator Ocean International}. (2025) Mercator ocean -- ocean forecasters.
  [Online; accessed 8-Sept-2025]. [Online]. Available:
  \url{https://www.mercator-ocean.eu/}
\BIBentrySTDinterwordspacing

\bibitem{copernicus_marine}
\BIBentryALTinterwordspacing
{Copernicus Marine Service}. (2025) Copernicus marine environment monitoring
  service. [Online; accessed 8-Sept-2025]. [Online]. Available:
  \url{https://marine.copernicus.eu/}
\BIBentrySTDinterwordspacing

\bibitem{jones2017planning}
D.~Jones and G.~A. Hollinger, ``Planning energy-efficient trajectories in
  strong disturbances,'' \emph{{IEEE} Robot. Autom. Lett.}, vol.~2, no.~4, pp.
  2080--2087, 2017.

\bibitem{song2019set}
H.~Song, P.~Shi, C.-C. Lim, W.-A. Zhang, and L.~Yu, ``Set-membership estimation
  for complex networks subject to linear and nonlinear bounded attacks,''
  \emph{IEEE {T}rans. on Neural Networks and Learning Systems}, vol.~31, no.~1,
  pp. 163--173, 2019.

\bibitem{year_prediction_msd_203}
T.~Bertin-Mahieux, ``{Year Prediction MSD},'' UCI Machine Learning Repository,
  2011, {DOI}: https://doi.org/10.24432/C50K61.

\end{thebibliography}

\newpage
\clearpage
\normalsize

\section*{Appendix D: Proof of Theorem \ref{avr-thm}}\label{pf-avr-thm}
For the sake of brevity, let us denote 
\begin{align}
	\delta^z_t &:= \EE\norm{\z_t - \x_\star}^2, & \tD_s &= \E{F(\tx_s)} - F(\x_\star).
\end{align}	
We begin with deriving some preliminary results. Using the definitions of $\x_t$, $\y_t$, $\z_t$, and $\z_{t-1}^+$ in Algorithm \ref{var-red-ssqp}, we see that 
\begin{align}\label{crossineq}
	\x_t-\y_t &= \alpha_s\br{\z_t-\z_{t-1}^+}, \\
	\y_t - \alpha_s\z_{t-1}^+ & = \br{1-\alpha_s-\omega_s}\x_{t-1} + \omega_s\tx_{s-1}. \label{yzrel}
\end{align}
The key to proving the required result is the following one-step inequality, which looks similar to the result in \cite[Lemma 6]{lan2019unified} but requires a different proof from that in the proximal case. 
\begin{lemma}\label{avr-lemma}
	If the parameters $\alpha_s$, $\omega_s$ and $\beta_s$ satisfy
	\begin{align} 
		\alpha_s + \omega_s &\leq 1 \label{avr-lemma-conda}\\
		1+\mu\beta_s-\alpha_s\beta_sL_\gamma &> 0 \label{avr-lemma-condb} \\
		\omega_s - \frac{\alpha_s\beta_sL_f}{1+\mu\beta_s-\alpha_s\beta_sL_\gamma} &\geq 0 ,\label{avr-lemma-condc}
	\end{align}
	then it holds that
	\begin{align}\label{rec}
		&\tfrac{\beta_s}{\alpha_s}\Delta_t + \br{1+\mu\beta_s}\tfrac{1}{2}\delta^z_t  \nonumber\\
		&\leq \tfrac{\beta_s}{\alpha_s}(1-\alpha_s-\omega_s)\Delta_{t-1}+ \tfrac{\beta_s\omega_s}{\alpha_s}\tD_{s-1}+ \tfrac{1}{2}\delta^z_{t-1}.
	\end{align}
\end{lemma}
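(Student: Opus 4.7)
The plan is to establish \eqref{rec} by starting from the three-point inequality for the subproblem \eqref{avr-zup} and then collapsing all deterministic quantities into function values at $\x_t$, $\x_{t-1}$, $\tx_{s-1}$ using the convex-combination structure $\x_t = (1-\alpha_s-\omega_s)\x_{t-1} + \alpha_s\z_t + \omega_s\tx_{s-1}$. Completing the square in $\alpha_s\beta_s\tfrac{\mu}{2}\norm{\y_t - \u}^2 + \tfrac{\alpha_s}{2}\norm{\z_{t-1} - \u}^2$ recenters the combined quadratic penalty at $\z_{t-1}^+$ with coefficient $\tfrac{\alpha_s(1+\mu\beta_s)}{2}$, revealing the subproblem as $\alpha_s(1+\mu\beta_s)$-strongly convex. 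Applying \eqref{sclb} at the minimizer $\z_t$ with test point $\u = \x_\star$ then produces a master inequality whose left-hand side contains a favourable quadratic budget $\tfrac{\alpha_s(1+\mu\beta_s)}{2}\norm{\z_t - \z_{t-1}^+}^2 = \tfrac{1+\mu\beta_s}{2\alpha_s}\norm{\x_t - \y_t}^2$ via \eqref{crossineq}, together with a term $-\tfrac{\alpha_s(1+\mu\beta_s)}{2}\norm{\x_\star - \z_{t-1}^+}^2$ which, using $(1+\mu\beta_s)\z_{t-1}^+ = \z_{t-1} + \mu\beta_s\y_t$ and the identity $2\ip{\u}{\v} = \norm{\u}^2 + \norm{\v}^2 - \norm{\u-\v}^2$, expands exactly into $-\tfrac{\alpha_s}{2}\norm{\x_\star - \z_{t-1}}^2 - \tfrac{\alpha_s\mu\beta_s}{2}\norm{\x_\star - \y_t}^2$ plus a non-negative remainder that can be safely dropped.

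Next, I collapse the two linearized $\max$ terms and the $f$- and $h$-terms into $F$-values. The $\z_t$-side $\max$ uses $\alpha_s(\z_t - \z_{t-1}^+) = \x_t - \y_t$ combined with $L_g$-smoothness of $g_k$ and \eqref{subhomo} to deliver $\gamma\beta_s\max\{[g_k(\x_t)]_+\}$ at a $\tfrac{\gamma\beta_s L_g}{2}\norm{\x_t-\y_t}^2$ cost. For the $\x_\star$-side $\max$, I introduce $\v_\star := (1-\alpha_s-\omega_s)\x_{t-1} + \alpha_s\x_\star + \omega_s\tx_{s-1}$, a convex combination under \eqref{avr-lemma-conda}, which by \eqref{yzrel} satisfies $\alpha_s(\x_\star - \z_{t-1}^+) = \v_\star - \y_t$; convexity of $g_k$ at $\v_\star$ combined with convexity of $\max\{[\cdot]_+\}$ and $g_k(\x_\star) \leq 0$ yields the upper bound $\gamma\beta_s(1-\alpha_s-\omega_s)\max\{[g_k(\x_{t-1})]_+\} + \gamma\beta_s\omega_s\max\{[g_k(\tx_{s-1})]_+\}$. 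Convexity of $h$ on the convex-combination expression of $\x_t$ handles the $h$-terms analogously. The deterministic part of $\alpha_s\beta_s\ip{\nt_t}{\z_t - \x_\star}$ equals $\beta_s\ip{\nabla f(\y_t)}{\x_t - \y_t} + \beta_s\ip{\nabla f(\y_t)}{\y_t - \v_\star}$; the first is bounded below by $\beta_s(f(\x_t) - f(\y_t)) - \tfrac{\beta_s L_f}{2}\norm{\x_t-\y_t}^2$ via smoothness of $f$, and the second by $\beta_s(f(\y_t) - (1-\alpha_s-\omega_s)f(\x_{t-1}) - \alpha_s f(\x_\star) - \omega_s f(\tx_{s-1})) + \tfrac{\alpha_s\beta_s\mu}{2}\norm{\y_t - \x_\star}^2$ via $\mu$-strong convexity of $f$ at $\x_\star$ combined with plain convexity at $\x_{t-1}$ and $\tx_{s-1}$. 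The extra $\tfrac{\alpha_s\beta_s\mu}{2}\norm{\y_t - \x_\star}^2$ precisely cancels the corresponding term from the $\norm{\x_\star - \z_{t-1}^+}^2$ expansion, leaving a clean deterministic contribution $\beta_s(F(\x_t) - F(\x_\star)) - \beta_s(1-\alpha_s-\omega_s)(F(\x_{t-1}) - F(\x_\star)) - \beta_s\omega_s(F(\tx_{s-1}) - F(\x_\star))$ modulo the accumulated $\norm{\x_t-\y_t}^2$ penalty.

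For the stochastic residual I split $\alpha_s\beta_s\ip{\nt_t - \nabla f(\y_t)}{\z_t - \x_\star}$ using \eqref{crossineq} as $\alpha_s\beta_s\ip{\nt_t - \nabla f(\y_t)}{\z_{t-1}^+ - \x_\star} + \beta_s\ip{\nt_t - \nabla f(\y_t)}{\x_t - \y_t}$; the first piece has conditional mean zero because $\z_{t-1}^+$ is measurable given $\y_t$ and $\EE_{i_t}[\nt_t] = \nabla f(\y_t)$, and the second is handled by Young's inequality \eqref{young} with parameter $c = (1+\mu\beta_s - \alpha_s\beta_s L_\gamma)/\alpha_s$ chosen so its quadratic cost exactly consumes the remaining $\norm{\x_t-\y_t}^2$ budget---which is positive precisely under condition \eqref{avr-lemma-condb}. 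The residual noise-norm $\EE\norm{\nt_t - \nabla f(\y_t)}^2$ is then controlled by the standard variance-reduction co-coercivity estimate of order $L_f D_f(\y_t, \tx_{s-1})$, and this is absorbed into $\tfrac{\beta_s\omega_s}{\alpha_s}\tD_{s-1}$ via convexity of $D_f(\cdot, \tx_{s-1})$ applied to the convex-combination expression of $\y_t$ as a combination of $\x_{t-1}$, $\z_{t-1}$, and $\tx_{s-1}$; the worst-case coefficient that must be absorbed is exactly what condition \eqref{avr-lemma-condc} enforces. Dividing the resulting inequality by $\alpha_s$ and taking full expectation delivers \eqref{rec}. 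The principal technical obstacle is this simultaneous budgeting at the $\norm{\x_t-\y_t}^2$ scale: three separate contributions---from $g_k$-smoothness, $f$-smoothness, and the Young's step on the noise---must fit under the single positive budget $\tfrac{1+\mu\beta_s}{2\alpha_s}$, and the post-Young noise residual must further embed cleanly into $\omega_s\tD_{s-1}$; the conditions \eqref{avr-lemma-condb}-\eqref{avr-lemma-condc} are the exact algebraic footprint of this balance.
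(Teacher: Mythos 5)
Your overall architecture coincides with the paper's proof: the $\alpha_s(1+\mu\beta_s)$-strong-convexity three-point inequality for \eqref{avr-zup}, the identity $\alpha_s(\z_t-\z_{t-1}^+)=\x_t-\y_t$ combined with $L_g$-smoothness to recover $\gamma\max\{[g_k(\x_t)]_+\}$ at a $\norm{\x_t-\y_t}^2$ cost, the convex-combination point $(1-\alpha_s-\omega_s)\x_{t-1}+\alpha_s\x_\star+\omega_s\tx_{s-1}$ for both the constraint term and $f$, the zero-mean splitting of the noise against $\z_{t-1}^+-\x_\star$, and the Young step whose parameter is tied to $1+\mu\beta_s-\alpha_s\beta_sL_\gamma$ are precisely the steps used in the paper. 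The one genuine gap is in the final disposal of the variance term.

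After the Young step, the variance estimate leaves a positive term of order $\frac{\alpha_s\beta_s L_f}{1+\mu\beta_s-\alpha_s\beta_s L_\gamma}\,D_f(\tx_{s-1},\y_t)$, where $D_f(\u,\v)=f(\u)-f(\v)-\ip{\nabla f(\v)}{\u-\v}$ is linearized at $\y_t$. You propose to absorb this into $\frac{\beta_s\omega_s}{\alpha_s}\tD_{s-1}$ by writing $\y_t$ as a convex combination of $\x_{t-1}$, $\z_{t-1}$, $\tx_{s-1}$ and invoking convexity of the Bregman divergence in its first argument. That step does not close: it produces $D_f(\x_{t-1},\tx_{s-1})$ and $D_f(\z_{t-1},\tx_{s-1})$, neither of which is bounded by $\tD_{s-1}=\E{F(\tx_{s-1})}-F(\x_\star)$ (they scale like $L_f\norm{\x_{t-1}-\tx_{s-1}}^2$, which is not controlled by anything on the right-hand side of \eqref{rec}); moreover, the co-coercivity estimate hands you the divergence linearized at $\y_t$, not at $\tx_{s-1}$, so convexity in the first argument does not even apply to the quantity you actually have. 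The mechanism that works, and the reason \eqref{avr-lemma-condc} has the form it does, is that in the $\omega_s$-weighted piece of the smoothness decomposition of $f(\x_t)$ you must \emph{not} replace $f(\y_t)+\ip{\nabla f(\y_t)}{\tx_{s-1}-\y_t}$ by its convexity upper bound $f(\tx_{s-1})$: keeping the exact identity $f(\y_t)+\ip{\nabla f(\y_t)}{\tx_{s-1}-\y_t}=f(\tx_{s-1})-D_f(\tx_{s-1},\y_t)$ retains a slack $-\omega_s D_f(\tx_{s-1},\y_t)$, and \eqref{avr-lemma-condc} is exactly the condition that this slack dominates the variance contribution. Because you apply plain convexity at $\tx_{s-1}$ and thereby discard this slack, your argument as written leaves an uncancelled positive term of order $L_f D_f(\tx_{s-1},\y_t)$ and does not yield \eqref{rec}. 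The fix is local; everything else in your outline is sound.
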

\begin{IEEEproof}
	We begin with using the equality in \eqref{crossineq} and the $L_g$-smoothness of $g_k$ to obtain
	\begin{align}
		&g_k(\y_t) + \alpha_s\ip{\nabla g_k(\y_t)}{\z_t - \z_{t-1}^+} \nonumber\\
		&\eqfrom{crossineq}  g_k(\y_t) + \ip{\nabla g_k(\y_t)}{\x_t - \y_t} \gfrom{a1} g_k(\y_t) - \frac{L_g}{2}\norm{\x_t - \y_t}^2 \nonumber\\
		&\eqfrom{crossineq}  g_k(\y_t) - \frac{\alpha_s^2L_g}{2}\norm{\z_t - \z_{t-1}^+}^2. \label{avr-proof1}
	\end{align}	
	Further, from Jensen's inequality, and from \eqref{avr-zplsup}, we obtain
	\begin{align}
		\norm{\z_t-\z_{t-1}^+}^2 \leq \tfrac{1}{1+\mu\beta_s}\norm{\z_t-\z_{t-1}}^2 + \tfrac{\mu\beta_s}{1+\mu\beta_s}\norm{\z_t-\y_t}^2. \label{jensen}
	\end{align}
	Next, since $\z_t$-update in \eqref{avr-zup} involves minimizing an $\alpha_s(1+\mu\beta_s)$-strongly convex function, we have that
	\begin{align}
		&\tfrac{\alpha_s\beta_s\mu}{2}\norm{\y_t-\x_\star}^2+\tfrac{\alpha_s}{2}\norm{\z_{t-1}-\x_\star}^2 -\tfrac{\br{1+\mu\beta_s}\alpha_s}{2}\norm{\z_t-\x_\star}^2 \nonumber\\
		&+ \gamma \beta_s\max\{[g_k(\y_t) + \alpha_s\ip{\nabla g_k(\y_t)}{\x_\star - \z_{t-1}^+}]_+\} \nonumber\\
		&\gfrom{sclb} \alpha_s\beta_s\ip{\nt_t}{\z_t-\x_\star} +  \frac{\alpha_s}{2}\norm{\z_{t-1}-\z_t}^2 \nonumber\\
		&+ \gamma \beta_s\max\{[g_k(\y_t) + \alpha_s\ip{\nabla g_k(\y_t)}{\z_t - \z_{t-1}^+}]_+\}  \nonumber\\
		& +\frac{\alpha_s\beta_s\mu}{2}\norm{\y_t-\z_t}^2 + \alpha_s\beta_sh(\z_t) - \alpha_s\beta_sh(\x_\star)\\
		&\gfrom{avr-proof1}\alpha_s\beta_s\ip{\nt_t}{\z_t-\x_\star} +  \frac{\alpha_s}{2}\norm{\z_{t-1}-\z_t}^2 \nonumber\\
		&+ \gamma \beta_s\max\{[g_k(\x_t)]_+\} - \frac{\gamma\alpha_s^2\beta_sL_g}{2}\norm{\z_t - \z_{t-1}^+}^2 \nonumber\\
		& +\frac{\alpha_s\beta_s\mu}{2}\norm{\y_t-\z_t}^2 + \alpha_s\beta_sh(\z_t) - \alpha_s\beta_sh(\x_\star).\label{tpM} 
	\end{align}
	Substituting \eqref{jensen} into \eqref{tpM} and re-arranging, we obtain
	\begin{align}\label{maist}
		&\frac{\alpha_s\beta_s\mu}{2}\norm{\y_t-\x_\star}^2+\frac{\alpha_s}{2}\norm{\z_{t-1}-\x_\star}^2 \nonumber\\
		&+ \gamma \beta_s\max\{[g_k(\y_t) + \alpha_s\ip{\nabla g_k(\y_t)}{\x_\star - \z_{t-1}^+}]_+\} \nonumber\\
		&-\br{1+\mu\beta_s}\frac{\alpha_s}{2}\norm{\z_t-\x_\star}^2 - \alpha_s\beta_sh(\z_t) + \alpha_s\beta_sh(\x_\star) \nonumber\\
		&\geq \alpha_s\beta_s\ip{\nt_t}{\z_t-\x_\star} + \gamma \beta_s\max\{[g_k(\x_t)]_+\} \nonumber\\
		&+ \frac{\br{1+\mu\beta_s}\alpha_s-\alpha_s^2\beta_s\gamma L_g}{2}\norm{\z_t - \z_{t-1}^+}^2 .
	\end{align} 
	
	Further, from the convexity of $g_k$ and \eqref{avr-lemma-conda}, we have that
	\begin{align}
		g_k&(\y_t) + \alpha_s\ip{\nabla g_k(\y_t)}{\x_\star - \z_{t-1}} \nonumber\\
		&\leq g_k(\y_t + \alpha_s(\x_\star-\z_{t-1})) \nonumber\\
		&\eqfrom{yzrel} g_k((1-\alpha_s-\omega_s)\x_{t-1} + \alpha_s \x_\star + \omega_s\tx_{s-1}) \nonumber\\
		&\leq(1-\alpha_s-\omega_s)g_k(\x_{t-1}) + \alpha_sg_k(\x_\star) + \omega_sg_k(\tx_{s-1}) \nonumber \\
		&\leq (1-\alpha_s-\omega_s)g_k(\x_{t-1})  + \omega_sg_k(\tx_{s-1}) , \label{avr-proof2}
	\end{align}
	where the last inequality uses the fact that $g_k(\x_\star) \leq 0$. Therefore, from the monotonicity of the $\max{[\cdot]_+}$ operator, we obtain
	\begin{align}
		&\max\{[g_k(\y_t) + \alpha_s\ip{\nabla g_k(\y_t)}{\x_\star - \z_{t-1}}]_+\} \nonumber\\
		&\lfrom{avr-proof2}\max\{[(1-\alpha_s-\omega_s)g_k(\x_{t-1}) + \omega_sg_k(\tx_{s-1})]_+\}\nonumber\\
		&\lfrom{triangle2} (1-\alpha_s-\omega_s)\max\{[g_k(\x_{t-1})]_+\} \nonumber\\
		&\hspace{1cm}+\omega_s\max\{[g_k(\tx_{s-1})]_+\}. \label{avr-proof3}
	\end{align}
	Substituting \eqref{avr-proof3} into \eqref{maist}, we obtain
	\begin{align} \label{maincon}
		\alpha_s\beta_s&\ip{\nt_t}{\z_t-\x_\star} +  \frac{\br{1+\mu\beta_s}\alpha_s-\alpha_s^2\beta_s\gamma L_g}{2}\norm{\z_t - \z_{t-1}^+}^2 \nonumber\\
		&+ \gamma \beta_s\max\{[g_k(\x_t)]_+\}  + \br{1+\mu\beta_s}\frac{\alpha_s}{2}\norm{\z_t-\x_\star} ^2 \nonumber\\
		&+ \alpha_s\beta_sh(\z_t) - \alpha_s\beta_sh(\x_\star)\nonumber\\
		&\hspace{-5mm}\leq \frac{\alpha_s\beta_s\mu}{2}\norm{\y_t-\x_\star}^2 + \frac{\alpha_s}{2}\norm{\z_{t-1}-\x_\star}^2 \nonumber\\
		&+ \gamma \beta_s(1-\alpha_s-\omega_s)\max\{[g_k(\x_{t-1})]_+\} \nonumber \\
		& +\gamma\beta_s\omega_s\max\{[g_k(\tx_{s-1})]_+\}. 
	\end{align}
	
	Since $f$ is $L_f$-smooth and convex, we have that
	\begin{align}
		&f(\x_t) \lfrom{qub} f(\y_t) + \ip{\nabla f(\y_t)}{\x_t-\y_t} + \frac{L_f}{2}\norm{\x_t-\y_t}^2 \nonumber\\
		&\eqtext{\eqref{crossineq},\eqref{avr-xup}} f(\y_t) + (1-\alpha_s-\omega_s)\ip{\nabla f(\y_t)}{\x_{t-1}-\y_t} \nonumber\\
		&+ \alpha_s\ip{\nabla f(\y_t)}{\z_t-\y_t} + \omega_s \ip{\nabla f(\y_t)}{\tx_{s-1}-\y_t}  \nonumber\\
		&+ \frac{\alpha_s^2L_f}{2}\norm{\z_t-\z_{t-1}^+}^2\\
		&\lfrom{a1} (1-\alpha_s-\omega_s)f(\x_{t-1})  \label{avr-proof4}\\
		& + \alpha_s(f(\y_t) + \ip{\nabla f(\y_t)}{\x_\star - \y_t} + \ip{\nabla f(\y_t)}{\z_t - \x_\star}) \nonumber\\
		& + \omega_s (f(\y_t) + \ip{\nabla f(\y_t)}{\tx_{s-1}-\y_t}) + \tfrac{\alpha_s^2L_f}{2}\norm{\z_t-\z_{t-1}^+}^2. \nonumber
	\end{align}
	Adding \eqref{avr-proof4}  and \eqref{maincon}, we obtain
	\begin{align}\label{recint}
		f&(\x_t)  + \gamma \max\{[g_k(\x_t)]_+\} \leq -\alpha_s\ip{\nt_t}{\z_t-\x_\star} \nonumber\\
		&- \frac{\br{1+\mu\beta_s}\alpha_s-\beta_s\alpha_s^2(L_f+\gamma L_g)}{2\beta_s}\norm{\z_{t}-\z_{t-1}^+}^2 \nonumber\\
		& + \frac{\alpha_s}{2\beta_s}\norm{\z_{t-1}-\x_\star}^2 - \br{1+\mu\beta_s}\frac{\alpha_s}{2\beta_s}\norm{\z_t-\x_\star}^2 \nonumber\\
		&+\omega_s(f(\tx_{s-1}) + \gamma \max\{[g_k(\tx_{s-1})]_+\} )  \nonumber\\
		&+  \omega_s ((f(\y_t) + \ip{\nabla f(\y_t)}{\tx_{s-1}-\y_t}) - f(\tx_{s-1})) \nonumber\\
		&+ \alpha_s\Big(f(\y_t) + \ip{\nabla f(\y_t)}{\x_\star - \y_t} + \frac{\mu}{2}\norm{\y_t-\x_\star}^2\Big) \nonumber\\
		&+  (1-\alpha_s-\omega_s)\Big(f(\x_{t-1}) + \gamma \max\{[g_k(\x_{t-1})]_+\}\Big) \nonumber\\
		& +\alpha_s\ip{\nabla f(\y_t)}{\z_t -\x_\star}  - \alpha_sh(\z_t) + \alpha_sh(\x_\star) .
	\end{align}
	Moreover, by convexity of $h$, we have that
	\begin{align}\label{cvxH}
		-\alpha_sh(\z_t) \leq -h(\x_t) + (1-\alpha_s-\omega_s)h(\x_{t-1}) + \omega_sh(\tx_{s-1}).
	\end{align}
	Substituting \eqref{cvxH} in \eqref{recint} and using the facts that $f$ is $\mu$-strongly convex and $\max\{[g_k(\x_\star)]_+\} =0$, we obtain
	\begin{align}
		F(\x_t) &\leq \alpha_s\ip{\nabla f(\y_t)-\nt_t}{\z_t-\x_\star} \nonumber \\
		&- \frac{\br{1+\mu\beta_s}\alpha_s-\beta_s\alpha_s^2(L_f+\gamma L_g)}{2\beta_s}\norm{\z_{t}-\z_{t-1}^+}^2  \nonumber\\
		&- \br{1+\mu\beta_s}\frac{\alpha_s}{2\beta_s}\norm{\z_t-\x_\star}^2  +  (1-\alpha_s-\omega_s)F(\x_{t-1}) \nonumber\\
		&+ \alpha_s F(\x_\star) +\omega_sF(\tx_{s-1})  + \frac{\alpha_s}{2\beta_s}\norm{\z_{t-1}-\x_\star}^2\nonumber\\
		&-  \omega_s D_f(\tx_{s-1},\y_t). \label{one-step0}
	\end{align}
	Next, we take expectation with respect to $i_t$ and consider the different terms in \eqref{one-step0} separately. First note that since $\y_t$ and $\z_{t-1}^+$ are independent of $i_t$ and hence $\Et{\nt} = \nabla f(\y_t)$, the first term in \eqref{one-step0} can be bounded as
	\begin{align}
		&\alpha_s\Et{\ip{\nabla f(\y_t)-\nt_t}{\z_t-\x_\star}} \nonumber\\
		&= \alpha_s\Et{\ip{\nabla f(\y_t)-\nt_t}{\z_t-\z_{t-1}^+}} \nonumber\\
		& \hspace{1cm}+ \alpha_s\ip{\nabla f(\y_t)-\Et{\nt_t}}{\z_{t-1}^+-\x_\star} \nonumber\\
		& \lfrom{young} \frac{\alpha_s\beta_s}{2(1+\mu\beta_s-\alpha_s\beta_sL_\gamma)}\Et{\norm{\nabla f(\y_t)-\nt_t}^2} \nonumber\\
		&+ \frac{(1+\mu\beta_s)\alpha_s-\alpha_s^2\beta_sL_\gamma}{2\beta_s}\Et{\norm{\z_t -\z_{t-1}^+}^2},  \label{avr-proof5}
	\end{align}
	where recall that $L_\gamma := L_f + \gamma L_g$ and from \eqref{avr-lemma-condb}. Here, since $\Et{\nabla f_{i_t}(\tx_{s-1})} = \nabla f(\tx_{s-1})$, the variance of $\nt$ can be bounded as
	\begin{align}
		&\Et{\norm{\nt_t-\nabla f(\y_t)}^2} \leq \Et{\norm{\nabla f_{i_t}(\y_t)-\nabla f_{i_t}(\tx_{s-1})}^2} \nonumber\\
		&\lfrom{coco} 2L_f(\Et{f_{i_t}(\tx_{s-1}) - f_{i_t}(\y_t) - \ip{\nabla f_{i_t}(\y_t)}{\tx_{s-1}-\y_t}})\nonumber\\
		&= 2L_fD_f(\tx_{s-1},\y_t) .\label{avr-proof6}
	\end{align}
	Substituting \eqref{avr-proof6} into \eqref{avr-proof5}, and adding with \eqref{one-step0} after taking expectation with respect to $i_t$, we obtain
	\begin{align}
		&\Et{F(\x_t) + (1+\mu\beta_s)\frac{\alpha_s}{2\beta_s}\norm{\z_t-\x_\star}^2} \nonumber\\
		&\leq (1-\alpha_s-\omega_s)F(\x_{t-1}) + \frac{\alpha_s}{2\beta_s}\norm{\z_{t-1} - \x_\star}^2 \nonumber\\
		&+ \omega_s F(\tx_{s-1}) + \alpha_sF(\x_\star) \nonumber\\
		& 	-(\omega_s - \frac{\alpha_s\beta_sL_f}{1+\mu\beta_s-\alpha_s\beta_sL_\gamma})D_f(\tx_{s-1},\y_t),
	\end{align}
	where observe that the last term is non-positive from \eqref{avr-lemma-condc} and can be dropped. Taking full expectation and re-arranging, we obtain the required result in \eqref{rec}
\end{IEEEproof}

It is remarked that the conditions required in \eqref{avr-lemma-conda}-\eqref{avr-lemma-condc} are satisfied by the choice of parameters in the statement of Theorem \ref{avr-thm}. The statement of Lemma \ref{avr-lemma} will subsequently be used for each case in Theorem \ref{avr-thm}. 

\subsection{Proof of Theorem \ref{avr-thm}(1)}\label{pfncsupply}
For general convex function $f$, setting $\mu=0$ in \eqref{rec},
\begin{align}\nonumber
	&\tfrac{\beta_s}{\alpha_s}\Delta_t + \tfrac{1}{2}\delta^z_t \leq \tfrac{\beta_s}{\alpha_s}(1-\alpha_s-\omega_s)\Delta_{t-1} +\tfrac{\beta_s\omega_s}{\alpha_s} \tD_{s-1} + \tfrac{1}{2}\delta^z_{t-1}.
\end{align}
Substituting $\theta_t$ as specified in \eqref{thetacond} at line 10 of Alg. \ref{var-red-ssqp} and summing the recursive expression for $t = 1$, $\ldots$, $T_s$, 
\begin{align}
	\tD_s\sum_{t=1}^{T_s} \theta_t \leq \sum_{t=1}^{T_s} \theta_t\Delta_t  \leq &\bsq{\tfrac{\beta_s}{\alpha_s}(1-\alpha_s) + (T_s-1)\tfrac{\beta_s\omega_s}{\alpha_s}}\tD_{s-1} \nonumber\\
	&+ \tfrac{1}{2}(\delta^z_{T_{s-1}}-\delta^z_{T_s}),
\end{align}
where, we have utilized the fact that $\x_0 = \tx_{s-1}$,  $\tx_s = \sum_t \theta_t\x_t /\sum_t\theta_t$, and the convexity of $F$. Let us denote 
\begin{align}
	\cL_s &:\eqtext{\eqref{thetacond}}\sum_{t=1}^{T_s} \theta_t  = \frac{\beta_s}{\alpha_s} + (T_s-1)\frac{\beta_s\br{\alpha_s+\omega_s}}{\alpha_s}, \\
	\cR_s &:= \frac{\beta_s}{\alpha_s}(1-\alpha_s) + (T_s-1)\frac{\beta_s\omega_s}{\alpha_s},
\end{align}
so that $\cL_s\tD_s = \cR_s\tD_{s-1} + \tfrac{1}{2}(\delta^z_{T_{s-1}}-\delta^z_{T_s})$. Summing over $j = 1,..,s$  and rewriting, we obtain $\cL_s\Delta_s+ \sum_{j=1}^{s-1}\br{\cL_j - \cR_{j+1}}\tD_j \leq \cR_1\tD_0 + \tfrac{1}{2}\delta^z_0-\tfrac{1}{2}\delta^z_{T_s}$ where the last term can be dropped. 

Next, we can show that $\nu_s := \cL_s - \cR_{s+1} \geq 0$ for $s\geq 1$. For $1\leq s<s_0$, we have $\alpha_s = \alpha_{s+1} = \omega_s = 1/2$, $\beta_s = \beta_{s+1}$, and $T_{s+1} = 2T_s$, so that $\nu_s = 0$. For $s\geq s_0$,
\begin{align}
	\nu_s  =& \tfrac{\beta_s}{\alpha_s} - \tfrac{\beta_{s+1}}{\alpha_{s+1}} + \beta_{s+1} + \br{T_{s_0} -1}\bsq{\tfrac{\beta_s\br{\alpha_s+\omega_s}}{\alpha_s} - \tfrac{\beta_{s+1}\omega_{s+1}}{\alpha_{s+1}}} \nonumber\\
	=& \tfrac{1}{24L_\gamma}\br{2+ \br{T_{s_0} - 1}(2(s-s_0+4)-1)} \geq 0.
\end{align}
Setting $\bar{\x}_s := \sum_{j=1}^{s-1} \nu_j\tx_j/\sum_{k=1}^{s-1} \nu_k$ and using the convexity of $F$, we obtain
\begin{align}\label{intrbp}
	\cL_s\tD_s + \br{\E{F(\bar{\x}_s)} - F(\x_\star)} &\sum_{j=1}^{s-1}\nu_j \leq \cR_1\tD_0 + \tfrac{1}{2}\delta^z_{T_0}\nonumber\\
	\Rightarrow \cL_s\tD_s  &\leq \cR_1\tD_0 + \tfrac{1}{2}\delta^z_{T_0},
\end{align}
where from \eqref{penalty1}, we have used $F(\bar{\x}_s) \geq F(\x_\star)$. Here, $\cR_1= 2/3L_\gamma$. In the case when $1\leq s\leq s_0$, we have that $\cL_s = \frac{2^{s+1}}{3L_\gamma}$ so that $\tD_s \leq 2^{-\br{s+1}}D_0$. For $s>s_0$, we have
\begin{align}
	\cL_s &= \tfrac{1}{3L_\gamma\alpha_s^2}\bsq{1+\br{T_{s_0}-1}\br{\alpha_s+\tfrac{1}{2}}} \nonumber\\
	&= \tfrac{\br{s-s_0+4}\br{T_{s_0}-1}}{6L_\gamma} + \tfrac{\br{s-s_0+4}^2\br{T_{s_0}+1}}{24L_\gamma}\geq \tfrac{\br{s-s_0+4}^2n}{48L_\gamma},\nonumber
\end{align}
where the last inequality follows from $T_{s_0} = 2^{\lfloor\log_2n\rfloor} \geq n/2$. Hence we obtain
\begin{align} \label{smesc}
	\tD_s \leq \tfrac{16D_0}{\br{s-s_0+4}^2n}.
\end{align}
First consider the case when $n\geq D_0/\epsilon$ for a given $\epsilon$. The algorithm cannot run for more than $s_0$ epochs, which can easily be checked as $2^{-\br{S_l-1}}D_0 \leq \eps$ which implies, that the total number of epochs the algorithm runs is given by 
\begin{align}
	S_l = \min\bc{\log{\tfrac{D_0}{\eps}}, s_0} = \log{\tfrac{D_0}{\eps}},\label{lowac2}
\end{align}
since $s_0 =  \lfloor\log n\rfloor + 1 \geq \log \tfrac{D_0}{\eps}$. The SFO and QMO complexities are then given by 
\begin{align}
	N_{\text{QMO}} &= \sum_{s=1}^{S_l}T_s = \cO\br{\min \bc{\tfrac{D_0}{\eps},n}}=\cO\br{\tfrac{D_0}{\eps}}, \label{case1q}\\
	N_{\text{SFO}} &= nS_l + \sum_{s=1}^{S_l}T_s = \cO\br{n\log{\tfrac{D_0}{\eps}}}, \label{case1s}
\end{align}
where we have used the fact that $n \geq \tfrac{D_0}{\epsilon}$. Next we consider the case when $n < D_0/\epsilon$ and it is possible to evaluate true gradient for more than $s_0$ epochs as
\begin{align}
	S_h = \left\lceil \sqrt{\tfrac{16D_0}{n\epsilon}} + s_0 -4 \right\rceil.
\end{align} 
The total number of gradient evaluations of $f_i$ is
\begin{align}
	\label{case2s}
	N_{\text{SFO}} &= ns_0 + \sum_{s=1}^{s_0}T_s + \br{S_h - s_0}\br{n + T_{s_0}} \\
	& = ns_0 + 2^{s_0}-1 + \br{n + T_{s_0}}S_h - s_02^{s_0-1} -ns_0 \nonumber\\
	&\leq \br{n + T_{s_0}}S_h \leq \br{n + 2^{s_0-1}}\br{\sqrt{\tfrac{16D_0}{n\epsilon}} + s_0}\nonumber\\
	&\leq \br{n + n}\br{\sqrt{\tfrac{16D_0}{n\epsilon}} + \log{n}} = \cO\br{n\log{n} + \sqrt{\tfrac{nD_0}{\epsilon}} },\nonumber\\
\end{align}
and,
\begin{align}
	N_{\text{QMO}} &= \sum_{s=1}^{s_0}T_s+T_{s_0}\br{S_h - s_0}\label{case2q}\\
	=&(2^{s_0}-1)+(2^{s_0-1})\br{\sqrt{\tfrac{16nD_0}{\eps}}-4}=\cO\br{\sqrt{\tfrac{nD_0}{\eps}}}.\nonumber
\end{align}
Combining, we obtain the desired bound in \eqref{avr-c-res}. 

\subsection{Proof of Theorem \ref{avr-thm}(2)}\label{pfscsupply}
For this result, we separately consider different cases based on values of $s$ and $n$. 
\subsubsection{Case $s\leq s_0$} In this case $\alpha_s = \omega_s = \frac{1}{2}$, $\beta_s = \frac{2}{3L_\gamma}$ and $T_s = 2^{s-1}$, so we can write \eqref{rec} as
\begin{align}
	&\frac{\beta_s}{\alpha_s}\Delta_t + \br{1+\mu\beta_s}\tfrac{1}{2}\delta^z_t \leq  \frac{\beta_s\omega_s}{\alpha_s} \tD_{s-1} + \tfrac{1}{2}\delta^z_{t-1}.
\end{align} 
Summing over $t=1$, $\ldots$, $T_s$, we obtain
\begin{align}
	\tfrac{\beta_s}{\alpha_s}&\sum_{t=1}^{T_s} \Delta_t + \tfrac{1}{2}\delta^z_{T_s} + \tfrac{\mu\beta_s}{2}\sum_{t=1}^{T_s}\delta^z_t \leq \tfrac{\beta_sT_s}{2\alpha_s} \tD_{s-1} + \tfrac{1}{2}\delta^z_0.
\end{align}
Using the definitions of $\tx_s$ and $\theta_t$, we obtain
\begin{align}
	\tfrac{4T_s}{3L_\gamma}\tD_s + \tfrac{1}{2}\delta^z_{T_s} &\leq \tfrac{4T_s}{6L_\gamma}\tD_{s-1} + \tfrac{1}{2}\delta^z_{T_{s-1}} \nonumber\\
	&= \tfrac{4T_{s-1}}{3L_\gamma}\tD_{s-1} + \tfrac{1}{2}\delta^z_{T_{s-1}}.
\end{align}
Applying the inequality recursively over $s$, we get
\begin{align} \label{nexL}
	\tfrac{4T_s}{3L_\gamma}&\tD_s + \tfrac{1}{2}\delta^z_{T_s} \leq \tfrac{4}{3L_\gamma}\tD_0 + \tfrac{1}{2}\delta^z_{T_0}.
\end{align}
By substituting $T_s = 2^{s-1}$, we conclude $\tD_s \leq 2^{-(s+1)}D_0$. Now by \eqref{case1s} and \eqref{case1q} we get the SFO and QMO complexities.

\subsubsection{Case  $s>s_0$ and $n \geq \frac{3\kappa}{4}$} In this case $\alpha_s = \omega_s = \frac{1}{2}$, $\beta_s = \frac{2}{3L_\gamma}$, and $T_s = T_{s_0} = 2^{s_0-1}$ so \eqref{rec} yields
\begin{align}
	\tfrac{4}{3L_\gamma}&\Delta_t + \br{1+\tfrac{2}{3\kappa}}\tfrac{1}{2}\delta^z_t \leq  \tfrac{2}{3L_\gamma} \tD_{s-1} + \tfrac{1}{2}\delta^z_{t-1}.
\end{align} 
Multiplying both sides by $\theta_t = \Gamma_{t-1} = \br{1+\tfrac{2}{3\kappa}}^{t-1}$,
\begin{align}
	&\frac{4\Gamma_{t-1}}{3L_\gamma}\Delta_t + \frac{\Gamma_{t}}{2}\delta^z_t \leq  \frac{2\Gamma_{t-1}}{3L_\gamma} \tD_{s-1} + \frac{\Gamma_{t-1}}{2}\delta^z_{t-1}.
\end{align}
Summing over $t=1$, $\ldots$, $T_s$, we obtain
\begin{align}\nonumber
	\tfrac{4}{3L_\gamma}\sum_{t=1}^{T_s}\theta_t&\Delta_t + \frac{\Gamma_{T_s}}{2}\delta^z_{T_s} \leq  \frac{2}{3L_\gamma}\tD_{s-1} \sum_{t=1}^{T_s}\theta_t + \frac{1}{2}\delta^z_{T_{s-1}}.
\end{align}
For $s\geq s_0$, it holds that $n\geq 2^{s_0-1} = T_{s_0} = 2^{\lfloor\log n\rfloor} \geq n/2$, and hence it follows
\begin{align}
	\Gamma_{T_s} = \br{1+\tfrac{2}{3\kappa}}^{T_s} &\geq 1+\tfrac{2\mu T_{s_0}}{3L_\gamma}\geq 1+\tfrac{T_{s_0}}{2n} &\geq \tfrac{5}{4}.
\end{align}
Denoting $\Theta_s := \sum_{t=1}^{T_s}\theta_t \geq T_s$ and applying these inequalities we obtain
\begin{align}
	&\tfrac{5}{4}\times\tfrac{2}{3L_\gamma}\tD_s +\tfrac{5}{4\Theta_s}\times\tfrac{1}{2}\delta^z_{T_s}  \leq  \tfrac{2}{3L_\gamma}\tD_{s-1} +\tfrac{1}{2\Theta_s}\delta^z_{T_{s-1}},
\end{align}
which upon continuing recursively for $s \geq s_0$, yields
\begin{align}
	\tfrac{2}{3L_\gamma}\tD_s + \tfrac{1}{2\Theta_s}\delta^z_{T_s} &\leq \br{\tfrac{4}{5}}^{s-s_0} \left[ \tfrac{2}{3L_\gamma}\tD_{s_0}+\tfrac{1}{2T_{s_0}}\delta^z_{T_{s_0}}\right] \nonumber\\
	&\hspace{-1cm}\leqtext{\eqref{nexL}}\br{\tfrac{4}{5}}^{s-s_0}\left[\tfrac{4}{3L_\gamma T_{s_0}}\tD_0 + \tfrac{1}{2T_{s_0}}\delta^z_{T_0}\right].
\end{align}
Substituting $T_{s_0} = 2^{s_0-1}$ and $D_0$, we obtain the final result for this case as $\tD_s \leq \br{\frac{4}{5}}^{s-s_0} \frac{D_0}{2^{s_0}} \leq \br{\frac{4}{5}}^s D_0$. Observing that VARAS runs for $S = \cO(\log\frac{D_0}{\eps})$ epochs, we bound SFO and QMO evaluations as,
\begin{align*}
	N_{\text{SFO}}&=nS+\sum_{s=1}^{S}T_s \leq 2nS = \cO\br{n\log\tfrac{D_0}{\eps}}, \\
	N_{\text{QMO}}&=\sum_{s=1}^{S}T_s=\cO\br{n+(S-s_0)n} = \cO\br{n\log\tfrac{D_0}{\eps}}.
\end{align*}

\subsubsection{Case $s_0<s\leq s_0+\sqrt{\frac{12\kappa}{n}} -4$ and $n < \frac{3\kappa}{4}$}  In this case $\alpha_s = \frac{2}{s-s_0+4}$, $\omega_s = \frac{1}{2}$, $\beta_s = \frac{s-s_0+4}{6L_\gamma}$, and $T_s = T_{s_0} = 2^{s_0-1}$. Observe that the parameter setting is same as in the smooth convex case with $\mu =0$. Hence the same result holds for positive $\mu$ values too which is 
\begin{align}\label{sgrts0}
	\cL_s&\tD_s + \tfrac{1}{2}\delta^z_{T_s} \leq \cR_{s_0+1} \tD_{s_0} + \tfrac{1}{2}\delta^z_{T_{s_0}} \leq \tfrac{D_0}{3L_\gamma}.
\end{align}
where the last inequality follows since $\cL_{s_0} \geq \tfrac{2T_{s_0}}{3L_\gamma}$. From the analysis in previous subsection, $\cL_s \geq (s-s_0+4)^2\frac{n}{48L_\gamma}$ and hence $\tD_s \leq  \tfrac{16D_0}{\br{s-s_0+4}^2n}$. Following \eqref{case2s} and \eqref{case2q}, we bound gradient and QP evaluations.

\subsubsection{Case $s > s_0+\sqrt{\frac{12\kappa}{n}} - 4$ and $n < \frac{3\kappa}{4}$} In this case, $\alpha_s = \sqrt{\frac{n}{3\kappa}}$, $\omega_s = \tfrac{1}{2}$, $\beta_s = \frac{1}{\sqrt{3nL_\gamma \mu}}$, and $T_s = T_{s_0} = 2^{s_0-1}$. By multiplying with $\Gamma_{t-1}$ on both sides of \eqref{rec}, we obtain
\begin{align}
	\tfrac{\beta_s}{\alpha_s}&\Gamma_{t-1}\Delta_t + \Gamma_{t}\tfrac{1}{2}\delta^z_t \leq \frac{\beta_s\omega_s}{\alpha_s} \Gamma_{t-1}\tD_{s-1} \nonumber\\ 
	&+\tfrac{\beta_s}{\alpha_s}(1-\alpha_s-\omega_s)\Gamma_{t-1}\Delta_{t-1} + \Gamma_{t-1}\tfrac{1}{2}\delta^z_{t-1}.
\end{align} 
Summing over $t=1$, $\ldots$, $T_s$, we obtain
\begin{align}
	&\tfrac{\beta_s}{\alpha_s}\sum_{t=1}^{T_s}\theta_t\Delta_t  +\tfrac{\beta_s}{\alpha_s}\sum_{t=1}^{T_s-1}(1-\alpha_s-\omega_s)\Gamma_{t}\Delta_t \nonumber\\
	&\leq \tfrac{\beta_s}{\alpha_s}\sum_{t=1}^{T_s}(1-\alpha_s-\omega_s)\Gamma_{t-1}\Delta_{t-1} + \tfrac{\beta_s\omega_s}{\alpha_s} \sum_{t=1}^{T_s}\Gamma_{t-1}\tD_{s-1} \nonumber \\
	&\hspace{5mm}+\tfrac{1}{2}\delta^z_{T_{s-1}} - \tfrac{\Gamma_{T_s}}{2}\delta^z_{T_s}.
\end{align}
By canceling the terms, and simplifying, we get
\begin{align}\label{rewrt}
	&\tfrac{\beta_s}{\alpha_s}\br{\sum_{t=1}^{T_{s_0}}\theta_{t}}\tD_s + \tfrac{\Gamma_{T_{s_0}}}{2}\delta^z_{T_s} \nonumber\\
	&\leq \tfrac{\beta_s}{\alpha_s}\big[1-\alpha_s-\omega_s + \omega_s\sum_{t=1}^{T_{s_0}}\Gamma_{t-1}\big]\tD_{s-1}+ \tfrac{1}{2}\delta^z_{T_{s-1}}.
\end{align}
As in \cite[Lemma 11]{lan2019unified}, 
we can establish that $\sum_{t=1}^{T_{s_0}}\theta_t \geq \Gamma_{T_{s_0}}\Omega_{s_0}$ where
\begin{align}
	\Omega_{s_0} := \tfrac{\beta_s}{\alpha_s}&\Big[1-\alpha_s-\omega_s + \omega_s\sum_{t=1}^{T_{s_0}}\Gamma_{t-1}\Big]\geq \tfrac{(\bar{s}_0 - s_0 +4)^2T_{s_0}}{24L_\gamma},\nonumber
\end{align}
where $\bar{s}_0 = s_0+\sqrt{\frac{12\kappa}{n}} - 4$, so that \eqref{rewrt} can be written as
\begin{align}
	\Omega_{s_0}\tD_s+ \tfrac{1}{2}\delta^z_{T_s} &\leq \tfrac{1}{\Gamma_{T_{s_0}}}\left[\Omega_{s_0}\tD_{s-1} + \frac{1}{2}\delta^z_{T_{s-1}}\right] \nonumber\\
	&\leq \tfrac{1}{\Gamma_{T_{s_0}}^{s-\bar{s}_0}}\left[\Omega_{s_0}\tD_{\bar{s}_0} + \frac{1}{2}\delta^z_{T_{\bar{s}_0}}\right].
\end{align}

From the analysis in previous subsection, it holds $\cL_{\bar{s}_0} \geq  \frac{(\bar{s}_0-s_0+4)^2T_{s_0}}{24L_\gamma} = \frac{T_{s_0}}{2n\mu}$. Denoting $\cC_s =  (1+\mu\beta_s)^{-T_{s_0}(s-\bar{s}_0)}$, we conclude this case with
\begin{align}
	&\tD_s \leq \cC_s \tD_{\bar{s}_0}+ \tfrac{12L_\gamma \cC_s}{(\bar{s}_0 - s_0 +4)^2T_{s_0}}\delta^z_{T_{\bar{s}_0}} \nonumber\\
	&\leq  \tfrac{24L_\gamma \cC_s}{(\bar{s}_0 - s_0 +4)^2T_{s_0}} \br{\cL_{\bar{s}_0}\tD_{\bar{s}_0}+\tfrac{1}{2}\delta^z_{T_{\bar{s}_0}}}\nonumber\\
	&\leq  \tfrac{24L_\gamma \cC_s}{(\bar{s}_0 - s_0 +4)^2T_{s_0}} \tfrac{D_0}{3L_\gamma} \leq \br{1+\sqrt{\tfrac{}{3n\kappa}}}^{\tfrac{-n(s-\bar{s}_0)}{2}} \tfrac{D_0}{3\kappa/4}.
\end{align}
We note that, here the number of epochs is bounded by $S = \bar{s}_0+2\sqrt{\frac{3 L_\gamma}{n\mu}}\log\frac{4D_0}{3\kappa\eps}$. Thus the number of gradient evaluations is bounded by
\begin{align}
	\label{case4s}
	N_{\text{SFO}} =& \sum_{s=1}^{S}(n+T_s)\nonumber\\
	=&\sum_{s=1}^{s_0}(n+T_s)+\sum_{s=s_0+1}^{\bar{s}_0}(n+T_{s_0}) + (n+T_{s_0})(S-\bar{s}_0)\nonumber\\
	\leq&2n\log n+2n(\sqrt{\frac{12\kappa}{n}}-4)+4n\sqrt{\frac{12\kappa}{n}}\log\frac{4D_0}{3\kappa\eps}\nonumber\\
	=&\cO\br{n\log{n} + \sqrt{n\kappa}\log{\frac{4D_0}{3\kappa\eps}}},
\end{align}
and QP evaluations are bounded by 
\begin{align}
	\label{case4q}
	N_{\text{QMO}} =& \sum_{s=1}^{S}T_s =\sum_{s=1}^{s_0}T_s+\sum_{s=s_0+1}^{\bar{s}_0}T_{s_0} + T_{s_0}(S-\bar{s}_0)\nonumber\\
	=&\cO \br{n+n(\sqrt{\frac{12\kappa}{n}}-4)+2n\sqrt{\frac{12\kappa}{n}}\log\frac{4D_0}{3\kappa\eps}}\nonumber\\
	=&\cO\br{n\log{n} + \sqrt{n\kappa}\log{\frac{4D_0}{3\kappa\eps}}}.
\end{align}

	Now by making similar arguments as in previous subsection Sec. \ref{pfncsupply} and \cite[Thm. 2]{lan2019unified} we obtain a bound on $\tD_s$. Finally, we can follow the steps in \eqref{con_ineq_1}-\eqref{con_ineq_2} to obtain the desired bounds on the optimality gap and constraint violation. 

To find $\epsilon$-optimal solution, the number of gradient evaluations needed by the algorithm are,
\begin{align}
	N = \left\{\begin{matrix}
			\cO\br{n\log{\frac{D_0}{\eps}}}& n \geq \frac{D_0}{\epsilon} \text{ or } n \geq \frac{3\kappa}{4}\\ 
			\cO\br{n\log{n} + \sqrt{\frac{nD_0}{\epsilon}} } & n < \frac{D_0}{\epsilon} \leq \frac{3\kappa}{4} \\
			\cO\br{n\log{n} + n\kappa\log{\frac{D_0/\epsilon}{3L_\gamma/4\mu}}} & n < \frac{3\kappa}{4} \leq \frac{D_0}{\epsilon}.
			\end{matrix}\right.
\end{align}

\end{document}